\numberwithin{equation}{section} 
\newtheorem{Thm}{Theorem}[section] 
\newtheorem{Prop}[Thm]{Proposition} 
\newtheorem{Lem}[Thm]{Lemma} 
\newtheorem{Cor}[Thm]{Corollary} 
\theoremstyle{remark} 
\newtheorem{Rem}[Thm]{Remark}
\theoremstyle{definition} 
\newtheorem{Def}[Thm]{Definition} 
\newtheorem{Exa}[Thm]{Example} 
\newtheorem{Conv}[Thm]{Convention}
\newcommand{\R}{\mathbb{R}}
\newcommand{\Z}{\mathbb{Z}}
\newcommand{\N}{\mathbb{N}}
\newcommand{\Q}{\mathbb{Q}}
\newcommand{\C}{\mathbb{C}}
\newcommand{\E}{\mathcal{E}}
\newcommand{\lrh}{\longrightarrow}
\newcommand{\str}{\stackrel}
\newcommand{\ot}{\otimes}
\newcommand{\om}{\omega}
\newcommand{\wt}{\widetilde}
\begin{document}

\author{Paolo Salvatore}
\address{Dipartimento di Matematica,  
Universit\`{a} di Roma Tor Vergata, 
Via della Ricerca Scientifica, 
00133 Roma, Italy}
\thanks{This work was partially supported by the Tor Vergata University grant E82F16000470005}

\title{Non-formality of planar configuration spaces in characteristic two}

\begin{abstract}
We prove that the ordered configuration space of 4 or more points in the plane has a non-formal singular cochain algebra
in characteristic two. This is proved by constructing an explicit non trivial obstruction class in the Hochschild cohomology of the cohomology ring of the configuration space, by means of the Barratt-Eccles-Smith simplicial model. We also show that if the number of points does not exceed its dimension, then an euclidean configuration space is intrinsically formal over any ring. 
\end{abstract}

\maketitle

\section{Introduction}
The notion of formality of a topological space $X$ is usually introduced
in the sense of rational homotopy theory via
some commutative differential algebra model in characteristic 0 of $X$, like the Sullivan-deRham algebra $A_{PL}(X)$, or the algebra of differential forms if $X$ is a manifold.
However one can define formality over any coefficient ring $R$  
in the non-commutative context, by requiring the algebra of $R$-valued cochains $C^*(X,R)$ to be quasi-isomorphic to the cohomology 
$H^*(X,R)$.
This means that $C^*(X,R)$ is connected to its cohomology $H^*(X,R)$ by a zig-zag of homomorphisms of differential graded associative $R$-algebras
 inducing isomorphisms in cohomology. If $R$ is a field then this property depends only on the characteristic of $R$.
If $R$ is a field of characteristic 0 then the formality in the associative sense is equivalent to the usual commutative formality by a recent result 
of Saleh \cite{Saleh}.

Let us consider the euclidean configuration spaces 
$$F_k(\R^n) = \{(x_1,\dots,x_k) \in (\R^n)^k \,  | \,  x_i \neq x_i \text{ for } i \neq j \} .$$
 
\noindent Kontsevich and Lambrechts-Volic  have proved their formality in characteristic zero.

\begin{Thm} \cite{Kon,LV}  \label{konlamb}
The configuration space $F_k(\R^n)$ is formal over $\R$ for any $k,n$.
\end{Thm}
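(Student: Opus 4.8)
The plan is to follow Kontsevich \cite{Kon} and Lambrechts--Vol\'ic \cite{LV}: exhibit the formality by a zig-zag of quasi-isomorphisms of \emph{commutative} differential graded $\R$-algebras (CDGAs) between a commutative model of $F_k(\R^n)$ and its cohomology. Since over a field of characteristic zero commutative formality is equivalent to formality in the associative sense of the introduction by Saleh's theorem \cite{Saleh}, this suffices. The cases $n\le 1$ are trivial, as $F_k(\R)$ is homotopy equivalent to $k!$ points and $F_k(\R^0)$ is empty or a point; so assume $n\ge 2$, where by Arnold (for $n=2$) and F.~Cohen (in general) the ring $H^*(F_k(\R^n);\R)$ is generated by classes $\om_{ij}$ of degree $n-1$, for $1\le i\ne j\le k$, subject to $\om_{ij}=(-1)^n\om_{ji}$, $\om_{ij}^2=0$, and the three-term Arnold relations.

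First I would replace $F_k(\R^n)$ by its Fulton--MacPherson compactification $\overline{F}_k$, a compact semialgebraic manifold with corners whose interior is $F_k(\R^n)$; since the inclusion is a homotopy equivalence it is enough to model $\overline{F}_k$. As the commutative model I would use the algebra $\Omega^*_{PA}(\overline{F}_k)$ of piecewise semialgebraic forms of Kontsevich--Soibelman, made rigorous by Hardt--Lambrechts--Turchin--Vol\'ic: it is naturally linked to $C^*(\overline{F}_k;\R)$ by quasi-isomorphisms, and it admits a fiber-integration operation along semialgebraic bundles satisfying a Stokes formula. On the combinatorial side I would use Kontsevich's CDGA $D_k$ of admissible diagrams: an additive generator is a graph on $k$ \emph{external} vertices labelled $1,\dots,k$ together with finitely many \emph{internal} vertices of valence $\ge 3$, with oriented and ordered edges subject to a sign rule depending on the parity of $n$; the degree of a diagram $\Gamma$ is $(n-1)\,\#E(\Gamma)-n\,\#V_{\mathrm{int}}(\Gamma)$, the product is superposition of edges over the common external vertices, and the differential contracts, one at a time, the edges incident to an internal vertex.

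There are then two maps out of $D_k$. The projection $\pi\colon D_k\to H^*(F_k(\R^n);\R)$ kills every diagram carrying an internal vertex and sends a diagram with edges $\{i_1,j_1\},\dots,\{i_r,j_r\}$ to the product $\om_{i_1j_1}\cdots\om_{i_rj_r}$. The Kontsevich integral $I\colon D_k\to\Omega^*_{PA}(\overline{F}_k)$ sends a diagram $\Gamma$ with internal vertex set $S$ to the fiber integral $(p_\Gamma)_*\bigl(\bigwedge_{e=(a,b)\in E(\Gamma)}u_{ab}^*\,\mathrm{vol}_{S^{n-1}}\bigr)$ along the forgetful projection $p_\Gamma\colon\overline{F}_{k\cup S}\to\overline{F}_k$, where $u_{ab}(x)=(x_a-x_b)/|x_a-x_b|$ is the (semialgebraic, compactified) Gauss map and $\mathrm{vol}_{S^{n-1}}$ is the unit-volume form. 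I would check that $I$ lands in PA forms and is a morphism of CDGAs: multiplicativity follows from a Fubini-type product formula for fiber integration, since superposition of diagrams corresponds to an iterated fiber product of the maps $p_\Gamma$; and the chain-map property follows from the Stokes formula, under which $dI(\Gamma)-I(d\Gamma)$ is a sum of integrals over the codimension-one boundary faces of $\overline{F}_{k\cup S}$ over $\overline{F}_k$. The ``principal'' faces, in which exactly two points collide, reproduce the edge contractions appearing in $d\Gamma$, while every remaining ``hidden'' face must be shown to contribute zero.

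That $\pi$ is a quasi-isomorphism I would deduce by filtering $D_k$ by the number of internal vertices and showing, via an explicit contracting homotopy on the diagrams that carry internal vertices, that cohomology is concentrated in the diagrams without them, where it matches the Arnold--Cohen presentation of $H^*(F_k(\R^n);\R)$ recalled above. A compatible filtration argument, together with the fact that on diagrams without internal vertices $I$ realizes the classes $\om_{ij}$ with the correct cup products inside $\Omega^*_{PA}(\overline{F}_k)$, then shows that $I$ too is a quasi-isomorphism, so that the zig-zag $\Omega^*_{PA}(\overline{F}_k)\xleftarrow{\,I\,}D_k\xrightarrow{\,\pi\,}H^*(F_k(\R^n);\R)$ exhibits the formality over $\R$. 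The main obstacle is the chain-map verification: one must check the convergence of the configuration-space integrals as PA forms on the compactification, and, above all, the vanishing of \emph{all} hidden-face contributions with the correct signs for every $n$, odd as well as even. This vanishing comes from a dimension count together with a symmetry argument --- an orientation-reversing involution of the relevant fiber, such as the antipodal map in a sphere direction or the reflection swapping two colliding points, forces such an integral to equal its own negative --- with the exceptional face in which all points collapse treated separately; the sign and orientation bookkeeping on the space of diagrams, terse in Kontsevich's original account, is exactly what Lambrechts and Vol\'ic carry out in full, alongside a rigorous foundation for PA forms and fiber integration.
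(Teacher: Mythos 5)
Your outline is correct and is precisely the argument of the cited sources: the paper itself gives no proof of this theorem, quoting it from Kontsevich \cite{Kon} and Lambrechts--Vol\'ic \cite{LV}, whose proof via the Fulton--MacPherson compactification, PA forms, the diagram CDGA, and the vanishing of hidden-face contributions is exactly what you sketch (with Saleh \cite{Saleh} converting commutative to associative formality, as the introduction notes). So your proposal matches the intended proof; the only caveat is that the hard analytic and sign bookkeeping you defer is the substance of \cite{LV}, not something rederived here.
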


The case of the configuration spaces in the plane
$F_k(\R^2)$ had been proved much earlier by Arnold \cite{Arn}.

What can be said about formality over the integers, or in positive characteristic? 

In section \ref{tre} we prove the following positive result. 

\begin{Thm} \label{intr}
The configuration space $F_k(\R^n)$ is intrinsically formal over any commutative ring $R$ if $n \geq k$.
\end{Thm}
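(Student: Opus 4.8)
The plan is to deduce intrinsic formality from the obstruction theory for formality, in the shape of a vanishing statement for Hochschild cohomology, exploiting the fact that $A:=H^*(F_k(\R^n),R)$ is concentrated in very few cohomological degrees. I would start from the classical description of this ring (Arnold for $n=2$, F.~Cohen in general): $H^*(F_k(\R^n),\Z)$ is free abelian of finite rank in each degree and is generated as a ring by classes $g_{ij}$ of degree $n-1$. Only two consequences are needed. First, $A\cong H^*(F_k(\R^n),\Z)\ot_\Z R$ is a free $R$-module, generated as an $R$-algebra in degree $n-1$. Second, $A$ is therefore concentrated in degrees divisible by $n-1$, with top degree $(k-1)(n-1)$; that is, $A^i=0$ whenever $i$ is not a multiple of $n-1$ lying in the interval $[0,(k-1)(n-1)]$.

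Next I would invoke the standard criterion: a graded $R$-algebra $A$ which is free as an $R$-module is intrinsically formal provided the Hochschild cohomology $HH^p(A,A)$ in internal degree $2-p$ vanishes for every $p\ge 3$ --- these groups house the successive obstructions to rigidifying an $A_\infty$-structure with the given product $m_2$ into a strictly associative algebra. Over a field this is classical; over a general commutative ring it applies here because $A$ is $R$-free, so any DGA $B$ with $H^*(B)\cong A$ transfers, via the homotopy transfer theorem, to a strictly unital minimal $A_\infty$-structure on $A$ whose operations $m_p$ ($p\ge 3$) are reduced Hochschild cochains of internal degree $2-p$. I would in fact prove the stronger statement that \emph{every} reduced Hochschild cochain $f\colon\overline{A}^{\ot p}\to A$ of internal degree $2-p$ (with $\overline A=A^{>0}$ the augmentation ideal) is identically zero for $p\ge 3$; then every such $m_p$ vanishes, the transferred structure is strict, and $B$ is formal, so $A$ is intrinsically formal.

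This last vanishing is a short dimension count and is the heart of the matter. Fix $p\ge 3$ and such a cochain $f$. On homogeneous elements $a_1,\dots,a_p\in\overline A$ one has $|a_i|=c_i(n-1)$ with $c_i\ge 1$, so $f(a_1\ot\cdots\ot a_p)$ is forced to lie in cohomological degree
\[
d=(n-1)(c_1+\cdots+c_p)+2-p\ \equiv\ 2-p\pmod{n-1}.
\]
If $n-1\nmid p-2$, then $d$ is not a multiple of $n-1$, so $A^d=0$. If $n-1\mid p-2$, then $p-2\ge 1$ forces $p-2\ge n-1$, hence $p\ge n+1$, and (using $c_i\ge 1$ and $n\ge 3$, so $n-2\ge 1$)
\[
d\ \ge\ (n-1)p+2-p\ =\ p(n-2)+2\ \ge\ (n+1)(n-2)+2\ =\ n(n-1)\ >\ (k-1)(n-1),
\]
the last inequality being precisely the hypothesis $n\ge k$; so again $A^d=0$. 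In either case $f=0$. The excluded case $n=2$ occurs only when $k\le 2$, where $F_k(\R^n)$ is contractible or a sphere and intrinsic formality is immediate.

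I expect the one genuinely delicate point to be the packaging of this obstruction theory over an arbitrary commutative ring rather than a field: one must guarantee that an arbitrary $R$-DGA with cohomology algebra $A$ can be replaced by a strictly unital minimal $A_\infty$-algebra structure on $A$ to which the degree count applies, and this is exactly where the $R$-freeness of $A$ enters. The two remaining inputs --- the generator-degree description of $A$ and the displayed arithmetic --- are entirely routine.
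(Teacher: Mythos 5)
Your argument is correct in substance and reaches the statement by a genuinely different route from the paper. The paper stays inside the Halperin--Stasheff/El Haouari filtered-model framework: it takes the bigraded model $(T(V),d)$ built from the Koszul dual Yang--Baxter algebra, whose level-$(i-1)$ generators sit in degree $1+i(n-2)$, observes that the perturbation $D-d$ must land in degrees $\equiv 2 \pmod{n-2}$ and cannot be quadratic for filtration reasons, hence vanishes on $V_{<n}$, and then notes that any value of the differential on $V_{\geq n}$ lies in degree at least $n(n-1)>(k-1)(n-1)$, above the top cohomology, so the filtered model still maps quasi-isomorphically to $H$. You instead run the intrinsic-formality criterion on the $A_\infty$ side: pass to a strictly unital minimal $A_\infty$-structure on $A=H^*(F_k(\R^n),R)$ and kill every reduced Hochschild cochain of bidegree $(p,2-p)$, $p\geq 3$, by a degree count; your inequality $n(n-1)>(k-1)(n-1)$ is literally the one in the paper, so the two counts are Koszul-dual shadows of each other. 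What your route buys is a cochain-level vanishing (stronger than vanishing of obstruction classes) and independence from the explicit bigraded model; what the paper's route buys is that it reuses the machinery needed later for the non-formality proof and it dodges your one delicate point, which deserves a precise citation rather than an appeal to ``the homotopy transfer theorem'': over an arbitrary commutative ring there is no evident deformation retract of $C^*(Y,R)$ onto its cohomology (singular cochains are not even degreewise projective, and the splitting induction goes the wrong way for unbounded cochain complexes), so one should invoke Kadeishvili's inductive construction of a strictly unital minimal model, which works exactly because $A$ is degreewise free of finite type, followed by a rectification step (bar-cobar) converting the resulting $A_\infty$-quasi-isomorphism into a zig-zag of DGA quasi-isomorphisms; the paper instead handles general $R$ by building its model over $\Z$ and tensoring. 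With those standard facts cited your proof is complete; note also that your excluded case $n=2$, $k\leq 2$ is in fact covered by the same reduced degree count, with no separate argument needed.
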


This means that any space with the same $R$-cohomology ring as the configuration space is formal over $R$.
The case $n < k$ is open in general. A special case is 
$$F_3(\R^2) \simeq S^1 \times (S^1 \vee S^1)$$
 that is formal over $\Z$.

One might expect that formality over $\Z$ holds for all configuration spaces as in  the rational case.

However we found the following surprising result:

\begin{Thm} \label{main}
The configuration space $F_k(\R^2)$ is not formal over $\Z_2$ for any $k \geq 4$.
\end{Thm}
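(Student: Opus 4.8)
\textbf{The plan} is to detect non-formality through the very first obstruction to splitting the $A_\infty$-structure on cohomology, realized inside a Hochschild complex. For any differential graded associative algebra $A$ over a field, homotopy transfer equips $H:=H^*(A)$ with a minimal $A_\infty$-structure $(H,m_2,m_3,m_4,\dots)$ ($m_1=0$, $m_2$ the induced product), and $A$ is formal if and only if this structure is $A_\infty$-isomorphic to the one with $m_i=0$ for $i\ge 3$. The lowest obstruction to such an isomorphism is the class of $m_3$ in the Hochschild cohomology group $HH^3(H,H)$ in the internal degree ($-1$) that makes $m_3$ a cochain; if $[m_3]\neq 0$ then $A$ is not formal. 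So it suffices to produce an explicit associative dg model $A$ of $C^*(F_k(\R^2);\Z_2)$ and check that the transferred $m_3$ is not a Hochschild coboundary.

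\textbf{Reduction to $k=4$.} After shrinking four marked points into a small disk and placing the remaining $k-4$ points at fixed locations outside it, and then forgetting those $k-4$ points, one gets maps $\iota\colon F_4(\R^2)\to F_k(\R^2)$ and $\pi\colon F_k(\R^2)\to F_4(\R^2)$ with $\pi\iota\simeq \mathrm{id}$, hence ring maps $\pi^*\colon H^*(F_4;\Z_2)\hookrightarrow H^*(F_k;\Z_2)$ and $\iota^*$ with $\iota^*\pi^*=\mathrm{id}$. Functoriality of minimal models gives an $A_\infty$-morphism $g$ with $g_1=\pi^*$; writing out its arity-$3$ relation and using that $m_3$ on $F_k(\R^2)$ would vanish if $F_k(\R^2)$ were formal, one finds $\pi^*\!\circ m_3^{F_4}=\delta(G\circ\pi^{*\otimes 2}+g_2)$ for any choice of primitive $G$ of $m_3^{F_k}$; applying the ring retraction $\iota^*$ yields $m_3^{F_4}=\delta(\iota^*\circ(\cdots))$. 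Thus a nonzero $[m_3]$ for $k=4$ propagates to all $k\ge 4$, and it is enough to work with $F_4(\R^2)$ (equivalently, to locate the obstruction inside $\pi^*H^*(F_4;\Z_2)\subset H^*(F_k;\Z_2)$).

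\textbf{The explicit model and the computation of $m_3$.} Take the $E_2$-suboperad $\mathcal{E}_2$ of the Barratt--Eccles operad determined by the Smith (complexity) filtration; $\mathcal{E}_2(k)$ is a simplicial set homotopy equivalent to $F_k(\R^2)$ with a completely combinatorial description of its simplices and faces, and $A:=C^*(\mathcal{E}_2(k);\Z_2)$ with the Alexander--Whitney cup product $\cup$ is an honest associative dg model of $C^*(F_k(\R^2);\Z_2)$. Fix explicit normalized cocycle representatives for a basis of $H^{\le 3}$ — in particular for the degree-one Arnold generators $\omega_{ij}$ — together with a contracting homotopy $h$ of $A$ onto its cohomology. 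For degree-one classes $a,b,c$ among these generators with $[a][b]=0=[b][c]$, the transferred operation is
\[
m_3(a,b,c)\;=\;p\bigl(a\cup h(b\cup c)\;+\;h(a\cup b)\cup c\bigr)\in H^2,
\]
which is to be evaluated on the finitely many relevant triples; the $(1,1,2)$, $(1,2,1)$, $(2,1,1)$ components landing in $H^3$ are handled the same way. The characteristic-two feature enters because the simplices of $\mathcal{E}_2$ lying beyond the associative suboperad $\mathcal{E}_1$ produce nontrivial cup-one products, so the primitives $h(a\cup b)$ of the vanishing products (the relations $\omega_{ij}^2=0$ and the Arnold relations) are genuinely nonzero and the resulting ternary operation carries a coefficient that is forced to be even — consistent with formality over $\R$, Theorem~\ref{konlamb} — but is a unit modulo $2$.

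\textbf{Nontriviality and the main obstacle.} With $m_3$ in hand as an explicit Hochschild $3$-cochain on the finite-dimensional graded algebra $H^*(F_4(\R^2);\Z_2)$, the final step is to show $[m_3]\neq 0$ by a finite linear-algebra computation: no Hochschild $2$-cochain $G\colon H^{\otimes 2}\to H$ of internal degree $-1$ satisfies $\delta G=m_3$. In practice one isolates the $(H^1)^{\otimes 3}\to H^2$ component and exhibits a specific Massey-type combination lying outside the indeterminacy $\{[a]\,u+v\,[c]\}$, the $\Sigma_4$-symmetry of the model and the Arnold relations keeping the bookkeeping manageable. I expect the principal obstacle to be precisely the chain-level work: selecting cocycle representatives and a contracting homotopy in $\mathcal{E}_2(k)$ for which the face and cup-one formulas stay tractable, and then pushing the homotopy-transfer formula through without the combinatorics exploding; a secondary difficulty is certifying that the resulting Hochschild cocycle is not a coboundary, i.e.\ controlling the relevant bidegree of $HH^{*}(H^*(F_4(\R^2);\Z_2))$, which is a finite computation once $k$ has been reduced to $4$.
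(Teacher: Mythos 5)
Your framework is sound and is essentially the paper's own, translated into $A_\infty$-language: the transferred $m_3$ is exactly the first obstruction that the paper encodes as the map $\alpha\colon W_2\to H^2$ in the Halperin--Stasheff/El Haouari filtered-model formalism, living in Hochschild bidegree $(3,-1)$, and your reduction to $k=4$ via $\pi\iota\simeq\mathrm{id}$ and functoriality of minimal $A_\infty$-models is a legitimate (arguably cleaner) substitute for the paper's Proposition~\ref{>4}, which instead invokes cofibrancy of the bar-dual model $B(YB_k)^*$ and identifies the algebraic projection $YB_k\to YB_4$ with the map induced by forgetting points via the lower central series of the pure braid group. The genuine gap is that the theorem's actual content --- the nonvanishing of the obstruction for $F_4(\R^2)$ over $\Z_2$ --- is nowhere established. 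You fix no contraction, choose no cochain representatives beyond naming $\omega_{ij}$, evaluate $m_3$ on no triple, and the ``finite linear-algebra computation'' you defer is precisely what occupies Sections~\ref{cinque} and~\ref{sei} of the paper: the cochains $\omega_{ij}$ and $Ar$, the $\cup_1$-corrections needed to realize the Arnold and commutation relations, the evaluation of $\alpha$ against the explicit planetary-system cycles in $\mathcal{E}_2(4)$, and the verification that the result is not a Hochschild coboundary. Declaring that computation finite does not discharge it; it is the proof.

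Moreover, the specific device you propose for the last step --- exhibiting ``a Massey-type combination lying outside the indeterminacy'' --- is unsubstantiated and quite possibly unavailable. The nonzero values of the obstruction computed in the paper, e.g.\ $\alpha((B_{12}B_{24}B_{14})^*)=A_{14}A_{12}+A_{12}A_{24}=A_{12}(A_{14}+A_{24})$ and likewise $A_{12}(A_{13}+A_{23})$, $A_{23}(A_{24}+A_{34})$, are each of the form $A_{ij}\cdot H^1$, exactly the shape absorbed by the indeterminacy of the corresponding triple products; no single value of $\alpha$ certifies nontriviality, and no nonvanishing triple Massey product is exhibited (or apparent) in $H^*(F_4;\Z_2)$. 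The paper detects the class only globally, by pairing $\alpha$ with the cycle $\beta\in W_2\otimes H_2(F_4)$, a sum of six terms whose cocycle condition with respect to the differential of equation~\ref{deltabar} is what makes the evaluation invariant under coboundaries (Proposition~\ref{nontrivial}), and this is then converted into non-formality via Lemma~\ref{impos} and Proposition~\ref{341}. Your outline needs an analogous certificate --- a functional on the Hochschild complex vanishing on coboundaries and nonzero on $m_3$ --- and supplies neither it nor the chain-level data from which it could be checked.
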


This implies immediately the non-formality over the integers.

\begin{Cor} \label{overz}
$F_k(\R^2)$ is not formal over $\Z$ for any $k \geq 4$.
\end{Cor}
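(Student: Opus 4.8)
The plan is to deduce the corollary from Theorem \ref{main} by a base-change argument, proving the contrapositive: if $F_k(\R^2)$ were formal over $\Z$, it would be formal over $\Z_2$, which Theorem \ref{main} forbids for $k\geq 4$.

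First I would pin down the two mod-$2$ reductions that enter. Since the singular chain complex $C_*(X;\Z)$ is free in each degree, $\mathrm{Ext}_\Z(C_*(X;\Z),\Z)=0$, so applying $\mathrm{Hom}_\Z(C_*(X;\Z),-)$ to $0\to\Z\xrightarrow{2}\Z\to\Z_2\to 0$ yields a natural isomorphism of differential graded algebras $C^*(X;\Z_2)\cong C^*(X;\Z)\ot_\Z\Z_2$ (the cup product is defined over $\Z$ and reduces mod $2$). And for $X=F_k(\R^2)$ the integral cohomology is the Arnold (Orlik--Solomon) algebra, which is finitely generated and free in each degree; being torsion-free it satisfies $H^*(F_k(\R^2);\Z)\ot_\Z\Z_2\cong H^*(F_k(\R^2);\Z_2)$ as graded algebras. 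Next, assuming $X$ formal over $\Z$, I would take a zig-zag of quasi-isomorphisms of differential graded $\Z$-algebras joining $C^*(X;\Z)$ to $H^*(X;\Z)$, replace it by one in which every term is non-negatively graded and degreewise flat over $\Z$ (e.g.\ by cofibrant replacement), and apply $-\ot_\Z\Z_2$ termwise. Each resulting map is still a quasi-isomorphism, because the mapping cone of a quasi-isomorphism between non-negatively graded, degreewise flat $\Z$-complexes is an acyclic, bounded-below complex of flat modules, which stays acyclic after reduction mod $2$. Composing with the two identifications above produces a zig-zag of quasi-isomorphisms of differential graded $\Z_2$-algebras from $C^*(X;\Z_2)$ to $H^*(X;\Z_2)$, that is, formality of $X$ over $\Z_2$.

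Granting this, the corollary is immediate: for $k\geq 4$, formality of $F_k(\R^2)$ over $\Z$ would contradict Theorem \ref{main}. The only point that genuinely requires care is the stability of quasi-isomorphisms under $-\ot_\Z\Z_2$; this is precisely why one must first arrange the intermediate algebras to be degreewise flat and concentrated in non-negative degrees, so as to avoid the familiar failure of base change along a non-flat ring extension for unbounded complexes. Everything else is routine bookkeeping.
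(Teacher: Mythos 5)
Your proposal is correct and follows essentially the same route as the paper, which deduces the corollary "immediately" from Theorem \ref{main} via exactly this base-change principle ($\Z$-formality implies $\Z_2$-formality because $C^*(X;\Z)$ and the torsion-free $H^*(F_k(\R^2);\Z)$ reduce mod $2$ to $C^*(X;\Z_2)$ and $H^*(X;\Z_2)$, and a suitably flat zig-zag survives $-\ot_\Z\Z_2$). One cosmetic remark: over $\Z$ the degreewise flatness alone already suffices (cycles of an acyclic complex of torsion-free groups are torsion-free, hence flat), so the non-negative grading you invoke is not actually the operative point.
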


  We approach this question by obstruction theory following
the work by Halperin-Stasheff \cite{HS} for cdga (commutative differential graded algebras) 
in characteristic 0 and its extension to the non-commutative case by El Haouari \cite{EH}.
We use the Barratt-Eccles-Smith simplicial model for the configuration space, and construct an explicit filtered 
model using its combinatorics. The obstruction class is not trivial in Hochschild cohomology. 
We had a computer-aided proof in an early version, but now we have a proof that can be checked directly.
In a sequel we will consider configuration spaces in $\R^3$.

\

The paper is organized as follows: in section 2 we recall the presentation of the cohomology rings of the euclidean configuration spaces and of their Koszul dual, the Yang Baxter algebras.
In section 3 we define the concept of formality and recall the theory of filtered models by Halperin-Stasheff .  
We then prove Theorem \ref{intr} using their theory.
In section 4 we describe the  Barratt-Eccles-Smith simplicial models for the euclidean configuration spaces. 
In section 5 we start constructing a filtered model for the Barratt-Eccles model for $F_4(\R^2)$ and consider the obstruction class to its formality.
In section 6 we prove that the obstruction class is not trivial in Hochschild cohomology, and so 
$F_k(\R^2)$ is not formal over $\Z_2$ for $k \geq 4$ ( Theorem \ref{main}).

\section{Cohomology of configuration spaces and Yang-Baxter algebras} \label{due}

We recall the presentation of the cohomology ring of the configuration spaces 
$F_k(\R^n)$. Then we describe its Koszul dual, the Yang-Baxter algebra, and 
its geometric interpretation. 

\

Consider the direction map
$\pi_{i,j}:F_k(\R^n) \to S^{n-1}$ from the $i$-th to the $j$-th particle 
given by $\pi_{i,j}(x_1,\dots,x_k)=(x_j-x_i)/|x_j-x_i|$.

Let $\iota \in H^{n-1}(S^{n-1})$ be the standard fundamental class.
We write $$A_{ij}=\pi_{i,j}^*(\iota) \in H^{n-1}(F_k(\R^n)).$$

The following computation of the cohomology ring is due to Arnold in the case
$n=2$ and Fred Cohen for $n>2$.

\begin{Thm} {\rm(Arnold, Cohen)} \cite{Arn, LNM} 
The cohomology ring $H^*(F_k(\R^n))$ for $n \geq 2$ has a presentation with $(n-1)$-dimensional generators
$A_{ij}$  for $1 \leq i \neq j \leq n$ and relations
\begin{enumerate}
\item $A_{ij}=(-1)^n A_{ji}$
\item $A_{ij}^2 =0$
\item  $A_{ij}A_{jk}+A_{jk}A_{ki}+A_{ki}A_{ij}=0$ for $i \neq j \neq k \neq i$ \rm{(Arnold)}
\end{enumerate}
\end{Thm}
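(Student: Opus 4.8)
\medskip

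The plan is to combine the Fadell--Neuwirth fibrations (for the additive structure), two elementary naturality arguments together with a three-point computation (for the relations), and an Orlik--Solomon-type normal form (to see that (1)--(3) are \emph{all} the relations). First I would exploit the forgetful map
\[
p_k\colon F_k(\R^n)\lrh F_{k-1}(\R^n),\qquad (x_1,\dots,x_k)\longmapsto(x_1,\dots,x_{k-1}),
\]
a locally trivial fibration (Fadell--Neuwirth) with fiber $\R^n\setminus\{k-1\text{ points}\}\simeq\bigvee_{k-1}S^{n-1}$ that admits a section (adjoin a $k$-th point on a large sphere, deformed so as to miss the others). Hence the Serre spectral sequence edge homomorphism $H^*(F_k(\R^n))\to H^*(\mathrm{fiber})$ is onto, and restricted to the fiber $\{x_j=p_j\}_{j<k}$ the classes $A_{1k},\dots,A_{k-1,k}$ become the standard basis of $H^{n-1}(\bigvee_{k-1}S^{n-1})$, since there $\pi_{i,k}$ is homotopic to the radial retraction onto the small sphere around $p_i$. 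By the Leray--Hirsch theorem, $H^*(F_k(\R^n))$ is then a free module over $H^*(F_{k-1}(\R^n))$ on $\{1,A_{1k},\dots,A_{k-1,k}\}$; inducting on $k$ from the contractible $F_1(\R^n)=\R^n$ shows $H^*(F_k(\R^n))$ is free abelian with Poincar\'e polynomial $\prod_{i=1}^{k-1}(1+i\,t^{n-1})$, of total rank $k!$, and the ``admissible monomials'' $A_{i_1 j_1}\cdots A_{i_r j_r}$ with $j_1<\dots<j_r$ and each $i_s<j_s$ form a $\Z$-basis.

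Next I would check that (1)--(3) hold among the $A_{ij}$. Relation (1) is immediate, since $\pi_{j,i}$ is the antipode of $S^{n-1}$ composed with $\pi_{i,j}$ and the antipode has degree $(-1)^n$; relation (2) holds because $A_{ij}^2=\pi_{i,j}^*(\iota^2)$ while $\iota^2\in H^{2(n-1)}(S^{n-1})=0$. The Arnold relation (3) involves only the points labelled $i,j,k$, so it is pulled back along the forgetful map $F_k(\R^n)\to F_3(\R^n)$ onto those three coordinates from the corresponding identity in $H^{2(n-1)}(F_3(\R^n))$, and there it follows from the standard direct argument: for $n=2$, Arnold's identity of $2$-forms $\omega_{ij}\wedge\omega_{jk}+\omega_{jk}\wedge\omega_{ki}+\omega_{ki}\wedge\omega_{ij}=0$ for closed logarithmic $1$-forms $\omega_{ij}$ with $[\omega_{ij}]=A_{ij}$ (a consequence of the partial-fractions identity), and the analogous computation for $n>2$ due to Cohen. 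Graded-commutativity of $H^*$, implicit in ``presentation'' here, needs no comment.

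The final, algebraic step is to see that (1)--(3) suffice. Let $E_{k,n}$ be the graded-commutative $\Z$-algebra presented by the $A_{ij}$ subject to (1)--(3): using (1) one keeps only generators with $i<j$, graded-commutativity sorts the factors of a monomial, relation (2) kills repetitions, and the Arnold relation applied to a triple $a<b<j$ rewrites any monomial containing two factors with the same larger index $j$ in terms of monomials with fewer such coincidences. A deletion--restriction / no-broken-circuit argument then shows $E_{k,n}$ is spanned by the admissible monomials above, whence its total rank is $\le k!$ and its Poincar\'e polynomial at most $\prod_{i=1}^{k-1}(1+i\,t^{n-1})$ coefficientwise. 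Since the tautological algebra map $E_{k,n}\to H^*(F_k(\R^n))$, $A_{ij}\mapsto\pi_{i,j}^*(\iota)$, is onto (the $A_{ij}$ generate) and both sides are free abelian of equal finite rank in each degree, it is an isomorphism. This combinatorial straightening --- choosing a term order for which the Arnold relation is confluent and confirming that the normal forms really number $k!$, i.e.\ that $H^*(F_k(\R^n))$ is the Orlik--Solomon algebra of the braid arrangement up to the regrading $1\mapsto n-1$ --- is the step I expect to be the real work, and it is precisely where Arnold settled $n=2$ and Cohen $n>2$. Pinning down the global sign in (3) is a minor secondary point, handled by the three-point computation.
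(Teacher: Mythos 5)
The paper does not prove this statement at all --- it is quoted as a classical theorem with citations to Arnold and Cohen --- so there is no in-paper argument to compare against. Your proposal is essentially the standard Arnold--Cohen proof and is correct in outline: the Fadell--Neuwirth fibration with its section, Leray--Hirsch applied to the classes $A_{1k},\dots,A_{k-1,k}$ (whose restrictions to the fiber $\R^n\setminus\{p_1,\dots,p_{k-1}\}$ are indeed the dual basis, since $\pi_{i,k}$ restricted to a small sphere around $p_j$ has degree $\delta_{ij}$), induction giving the admissible-monomial basis, the naturality/antipode/dimension arguments for relations (1)--(2), reduction of (3) to $F_3(\R^n)$ by naturality, and finally the counting argument that the presented algebra surjects onto $H^*$ while being spanned by the admissible monomials. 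Two small points to tighten: the section of $p_k$ is not what gives surjectivity onto fiber cohomology --- your explicit restriction computation is the actual justification, and Leray--Hirsch needs nothing more; and in the last step you cannot assert in advance that $E_{k,n}$ is free abelian, but the argument goes through anyway, since a group spanned by $N$ elements surjecting onto $\Z^N$ is forced to be $\Z^N$ and the surjection an isomorphism. The genuinely laborious part, as you note, is the straightening: one needs a termination measure (e.g.\ induction on the number of pairs of factors sharing the same larger index, or the no-broken-circuit formalism for the braid arrangement), since applying the Arnold relation can create new coincidences; this is exactly the content of the cited references, so deferring it there is consistent with the status of the statement in the paper.
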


Thus up to the grading this ring depends only on the parity of $n$.

\begin{Cor} \label{basis}
The cohomology groups $H^*(F_k(\R^n))$ are torsion free, and have a graded basis provided by the set
$$\{A_{i_1 j_1} \cdots A_{i_l j_l} \, | \, j_1< \dots <j_l\, , \, i_t <j_t \quad \forall  t \}$$
\end{Cor}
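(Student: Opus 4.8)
The plan is to prove both claims simultaneously by induction on $k$, using the classical Fadell--Neuwirth fibration
$$p\colon F_k(\R^n) \lrh F_{k-1}(\R^n),\qquad (x_1,\dots,x_k)\mapsto (x_1,\dots,x_{k-1}),$$
whose fibre over a point $(x_1,\dots,x_{k-1})$ is $\R^n\setminus\{x_1,\dots,x_{k-1}\}$, homotopy equivalent (for $n\ge 2$, as in the presentation above) to a wedge of $k-1$ copies of $S^{n-1}$. Thus the fibre cohomology is free abelian, concentrated in degrees $0$ and $n-1$, with basis $1$ together with the $k-1$ classes dual to small spheres linking the punctures. The key geometric observation is that, by the very definition of the direction maps $\pi_{i,k}$, the global class $A_{ik}\in H^{n-1}(F_k(\R^n))$ restricts on each fibre exactly to the class linking $x_i$. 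Hence the classes $1,A_{1k},\dots,A_{k-1,k}$, which live on the total space, restrict to a basis of the cohomology of every fibre.

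With this in hand I would invoke the Leray--Hirsch theorem: it gives that $H^*(F_k(\R^n))$ is a free module over $H^*(F_{k-1}(\R^n))$ with basis $\{1,A_{1k},\dots,A_{k-1,k}\}$. The base case $k=1$ is trivial, $F_1(\R^n)=\R^n$ being contractible. For the inductive step, the hypothesis says $H^*(F_{k-1}(\R^n))$ is free abelian with graded basis the admissible monomials $A_{i_1j_1}\cdots A_{i_lj_l}$ with $j_1<\dots<j_l\le k-1$ and $i_t<j_t$; since $p^*$ sends such a monomial to the corresponding monomial in $H^*(F_k(\R^n))$ (the map $\pi_{i,j}$ for $i,j\le k-1$ factors through $p$), multiplying the inductive basis by $1$ and by each $A_{ik}$ with $i<k$ — and using that an admissible monomial of $F_k$ with top index $k$ is uniquely an admissible monomial of $F_{k-1}$ followed by a single factor $A_{ik}$ — produces exactly the set in the statement. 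Freeness as a graded abelian group, hence torsion-freeness, is preserved because Leray--Hirsch identifies $H^*(F_k(\R^n))$ with $H^*(F_{k-1}(\R^n))\otimes_{\Z} H^*(\mathrm{fibre})$, a tensor product of free abelian groups that is finitely generated in each degree.

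The point that needs the most care — and where I would concentrate the argument — is verifying the Leray--Hirsch hypothesis, i.e. that the restrictions of $1,A_{1k},\dots,A_{k-1,k}$ really are a basis of $H^*\!\left(\bigvee_{k-1}S^{n-1}\right)$; this is a direct linking-number computation but is the only genuinely geometric input. An alternative, purely algebraic route bypasses Leray--Hirsch: relations (1)--(3) of the Arnold--Cohen presentation let one rewrite any monomial in the $A_{ij}$ as a combination of admissible ones (a straightening procedure that repeatedly applies the Arnold relation (3) to lower the top index of a repeated or mis-ordered factor), proving that the stated set spans; one then counts ranks using the fibration, which collapses because it admits a section — send $(x_1,\dots,x_{k-1})$ to that configuration together with an extra point far out along a fixed axis — giving Poincar\'e polynomial $\prod_{i=1}^{k-1}(1+i\,t^{n-1})$, whereupon the matching count forces linear independence. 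Either way, the spanning step is routine combinatorics with the Arnold relation, and the real content is the rank/independence assertion, which rests on the Fadell--Neuwirth fibration.
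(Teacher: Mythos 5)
Your Leray--Hirsch/Fadell--Neuwirth induction is correct, and it is exactly the classical argument behind the Arnold--Cohen references: the paper states this corollary without proof, deferring to those sources, so your proof supplies the standard argument the paper relies on. One small caveat on your optional alternative route only: the existence of a cross-section by itself does not force the Serre spectral sequence to collapse (so the Poincar\'e polynomial count needs the fibre-restriction surjectivity you already verify in the main argument), but this does not affect your primary proof, which is complete.
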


\begin{Cor}
The Poincare series of the configuration space is 
$$P(F_k(\R^n))=\prod_{m=1}^{k-1}(1+mx^{n-1})$$
\end{Cor}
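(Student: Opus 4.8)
The plan is to prove the Poincaré series formula $P(F_k(\R^n)) = \prod_{m=1}^{k-1}(1 + m x^{n-1})$ as a direct consequence of the graded basis given in Corollary~\ref{basis}. Since the cohomology groups are torsion free, it suffices to count the ranks, and these ranks are exactly the number of admissible monomials of each degree. First I would reinterpret the indexing set in the basis: a basis monomial $A_{i_1 j_1} \cdots A_{i_l j_l}$ with $j_1 < \dots < j_l$ and $i_t < j_t$ for all $t$ is determined by choosing a subset $S \subseteq \{2, 3, \dots, k\}$ of ``large indices'' $\{j_1, \dots, j_l\}$, and for each $j_t \in S$ an arbitrary ``small index'' $i_t$ with $1 \le i_t < j_t$. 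Thus the number of small-index choices for a given large index $j$ is exactly $j - 1$, and these choices are independent across the elements of $S$.

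Next I would set up the generating function. Each basis monomial contributes $x^{(n-1)l}$ to the Poincaré series, where $l = |S|$; equivalently it contributes a factor $x^{n-1}$ for each large index used. Summing over all subsets $S \subseteq \{2, \dots, k\}$ and all admissible small-index assignments factorizes as a product over the possible large indices $j \in \{2, \dots, k\}$: for each such $j$ we either omit it (contributing $1$) or include it together with one of the $j-1$ choices of small index (contributing $(j-1)x^{n-1}$). This yields
\begin{equation*}
P(F_k(\R^n)) = \prod_{j=2}^{k} \bigl(1 + (j-1)x^{n-1}\bigr) = \prod_{m=1}^{k-1} \bigl(1 + m x^{n-1}\bigr),
\end{equation*}
after reindexing $m = j-1$. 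This completes the argument.

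There is essentially no hard step here: the entire content is repackaging the combinatorial description of the basis from Corollary~\ref{basis} into a product generating function, using that the small-index choices attached to distinct large indices are independent. The only point requiring a line of care is confirming that the stated basis conditions really do amount to ``choose a subset of $\{2,\dots,k\}$, then freely choose one smaller index for each chosen element'' — in particular that the ordering condition $j_1 < \dots < j_l$ just fixes the writing order of the monomial and imposes no further constraint, and that products with repeated $A_{ij}$ or with a fixed $j$ appearing twice are already excluded (the latter because relation~(2), $A_{ij}^2 = 0$, together with relation~(3) forces any monomial with two factors sharing the larger index to vanish or be rewritten, so it does not appear in the chosen basis). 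Granting the basis as stated, the formula follows immediately.
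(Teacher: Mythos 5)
Your proposal is correct and is exactly the argument the paper intends: the corollary is stated as an immediate consequence of the graded basis in Corollary \ref{basis}, and your count (for each large index $j\in\{2,\dots,k\}$, either omit it or include it with one of the $j-1$ admissible small indices, giving a factor $1+(j-1)x^{n-1}$) is the standard way to turn that basis into the product formula. No essential difference from the paper's (implicit) proof.
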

Notice that $F_k(\R^n)$ is the total space of a tower of fibrations with 
fibers homotopic to $\vee_m S^{n-1}$ for $m=1,\dots,k-1$. Thus the additive structure does not see the twisting of the fibrations, but the cohomology ring does.

\
A remarkable fact is that the cohomology of the configuration spaces is a Koszul algebra ( see example 32 in \cite{Berglund}) .
The following holds with coefficients in $\Z$.

\begin{Prop}  \label{ybb}
 The Koszul dual algebra of $H^*(F_k(\R^n))$ for $n \geq 2$  is 
 the Yang-Baxter algebra $YB_k^{(n)}$ generated by classes
  $B_{ij}$ of degree $n-2$, for $1 \leq i \neq j \leq k$ under the
relations
\begin{enumerate}
\item  $B_{ij} = (-1)^nB_{ji}$ 
\item  $[B_{ij},B_{jk}] = [B_{jk}, B_{ki}] = [B_{ki},B_{ij}] \text{ for }  i \neq j \neq k \neq i$ {\rm (Yang-Baxter) } 
\item  $[B_{ij}, B_{rt}] = 0$ when $\{i,j\} \cap \{r,t\} = \emptyset$
\end{enumerate}
\end{Prop}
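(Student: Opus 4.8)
The plan is to prove Proposition~\ref{ybb} by the standard machinery of Koszul duality for quadratic algebras, taking advantage of the fact (cited from \cite{Berglund}) that $H^*(F_k(\R^n))$ is Koszul. Since both $H^*(F_k(\R^n))$ and the proposed dual $YB_k^{(n)}$ are presented by generators and quadratic relations, it suffices to check that the space of relations of one is the annihilator of the space of relations of the other under the canonical pairing between the degree-two parts of the respective tensor algebras. Concretely, write $V$ for the free module on symbols $A_{ij}$ in degree $n-1$ and let $R \subseteq V \otimes V$ be the submodule generated by the three families of Arnold--Cohen relations; then by definition the Koszul dual has generating module $V^{\vee}$ (shifted appropriately so that the generators land in degree $n-2$) and relations $R^{\perp} \subseteq V^{\vee} \otimes V^{\vee}$. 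The task is to identify $R^{\perp}$ with the span of the Yang--Baxter relations (1)--(3).

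First I would fix dual bases: let $B_{ij} \in V^{\vee}$ be dual to $A_{ij}$, and compute the pairing $\langle B_{ab} \otimes B_{cd},\, A_{ij} A_{kl} \rangle$ with the appropriate Koszul sign coming from the degree shift (this is where the parity of $n$ enters, and where relation (1) for the $B$'s, $B_{ij}=(-1)^n B_{ji}$, as well as the bracket notation in (2)--(3) — which encodes graded commutators — must be reconciled with the signs). Then I would verify, family by family, that each generator of $R$ annihilates each Yang--Baxter relation, and conversely do a rank count: using the graded basis of $H^*(F_k(\R^n))$ from Corollary~\ref{basis} I know $\dim R$ in the free-module sense, hence $\dim R^{\perp} = (\dim V)^2 - \dim R$, and I would check that the listed Yang--Baxter relations span a submodule of exactly that rank (using that everything is torsion-free over $\Z$, by Corollary~\ref{basis}, so rank counting over $\Q$ suffices). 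The symmetry relation $A_{ij} = (-1)^n A_{ji}$ dualizes to $B_{ij} = (-1)^n B_{ji}$; the square-zero relations $A_{ij}^2 = 0$ together with the disjointness of index sets dualize to the commutation relations (3); and the Arnold three-term relation dualizes to the Yang--Baxter relation (2) — each of these is a short linear-algebra verification once the sign conventions are pinned down.

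The main obstacle I anticipate is purely bookkeeping: getting the Koszul signs exactly right so that the cyclic three-term Arnold relation $A_{ij}A_{jk} + A_{jk}A_{ki} + A_{ki}A_{ij} = 0$ transposes cleanly into the symmetric form $[B_{ij},B_{jk}] = [B_{jk},B_{ki}] = [B_{ki},B_{ij}]$. One has to be careful that the Koszul dual of a relation with internal degree $2(n-1)$ sits in a tensor of things of degree $n-2$, so the suspension/desuspension shifts contribute signs depending on $n \bmod 2$, and the bracket $[\,,\,]$ in the statement is the graded commutator with respect to the $(n-2)$-grading; checking that all three cyclic rewritings of the Arnold relation pair to zero against $[B_{ij},B_{jk}] - [B_{jk},B_{ki}]$ and against $[B_{jk},B_{ki}] - [B_{ki},B_{ij}]$, and that nothing else in $V^{\vee} \otimes V^{\vee}$ does, is the crux. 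Once that is done, Koszulness of $H^*(F_k(\R^n))$ guarantees there are no higher relations to worry about and the proposition follows.
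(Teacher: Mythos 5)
The paper itself gives no proof of Proposition \ref{ybb}: it is recalled as a known fact, with Koszulness quoted from \cite{Berglund} and the structure of $YB_k^{(n)}$ from \cite{CG}, so there is no in-text argument to compare yours against. Your route --- check that the Yang--Baxter relations are exactly the annihilator $R^{\perp}$ of the quadratic relation space $R$ of $H^*(F_k(\R^n))$, then use Koszulness to identify the quadratic dual with the Koszul (Ext) dual --- is the standard and appropriate one, and the orthogonality-plus-rank-count strategy is the right shape of argument.

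There is, however, one genuine gap in the setup as written: you take $R \subseteq V \ot V$ to be \emph{generated by the three listed Arnold--Cohen families}. That list presents $H^*(F_k(\R^n))$ as a graded-\emph{commutative} ring; inside the tensor algebra the quadratic relation space must in addition contain all graded commutation relations $A_{ij}A_{kl} - (-1)^{(n-1)^2} A_{kl}A_{ij}$ (both for pairs sharing an index and for disjoint pairs), and these are \emph{not} in the span of the squares and the Arnold tensors. Concretely, for $k=3$ and $n$ even (so $A_{ij}=A_{ji}$, generators of odd degree), the squares together with the Arnold tensors span a rank-$5$ submodule of $V\ot V\cong \Z^9$, whereas $R=\ker\bigl(V\ot V \to H^{2(n-1)}\bigr)$ has rank $7$ since $H^{2(n-1)}(F_3)$ has rank $2$; only the \emph{sum} of the three anticommutators lies in the smaller span. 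So your orthogonality check would be run against a non-spanning set, and your rank count $\dim R^{\perp} = (\dim V)^2 - \dim R$ would not match the relation space you actually verified. The repair is routine: define $R$ as the kernel of the multiplication $V\ot V \to H^{2(n-1)}$ (its rank is read off from the basis in Corollary \ref{basis}), note that it is spanned by squares, graded commutators and Arnold elements, and then carry out your pairing computation; also handle relation (1) on each side not by ``dualizing'' a linear relation (quadratic duality does not see linear relations) but by reducing both generating sets to the $i<j$ generators first, and over $\Z$ check that the span of the Yang--Baxter relations is a direct summand (e.g.\ via the integral basis of $YB_k$ quoted from \cite{CG}) rather than relying only on a rank count over $\Q$.
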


Clearly this algebra up to the grading depends only on the parity of $n$.

\begin{Prop} \cite{CG}
The Yang-Baxter algebra is torsion free  
and has a graded basis 
$$\{B_{i_1 j_1} \cdots B_{i_l j_l}\, | \, j_1 \leq \dots \leq j_l \, , \, i_t<j_t \; \forall t     \} $$
\end{Prop}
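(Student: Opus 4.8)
The plan is to deduce the basis from an iterated semidirect–product decomposition of $YB_k^{(n)}$, proving freeness and the basis simultaneously by induction on $k$, and then to cross‑check the count by Koszul duality. First I would trim the generating set: relation (1) lets one use only the $B_{ij}$ with $i<j$, so $YB_k^{(n)}$ is generated by these $\binom{k}{2}$ classes. Let $L_k\subseteq YB_k^{(n)}$ be the subalgebra generated by $W_k:=\{B_{1k},\dots,B_{k-1,k}\}$, and let $YB_{k-1}^{(n)}\to YB_k^{(n)}$ be the map induced by $B_{ij}\mapsto B_{ij}$ for $i<j<k$, which is an algebra map because every relation of $YB_{k-1}^{(n)}$ is one of $YB_k^{(n)}$. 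The two claims I would establish inductively are: (A) the composite $\Z\langle W_k\rangle\to L_k\subseteq YB_k^{(n)}$ from the free associative algebra on the symbols $W_k$ is an isomorphism onto $L_k$ — i.e.\ the $B_{ik}$ satisfy no relation in $YB_k^{(n)}$, which is at least plausible since relations (2) involve three distinct indices and relation (3) requires disjoint pairs, so neither can relate the $B_{ik}$ alone; and (B) multiplication $YB_{k-1}^{(n)}\otimes L_k\to YB_k^{(n)}$ is an isomorphism of right $L_k$‑modules.

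Granting (A), (B), the inductive basis for $YB_{k-1}^{(n)}$, and the base case $YB_2^{(n)}=\Z\langle B_{12}\rangle$, the product of the two bases is exactly the displayed set: a basis monomial of $YB_{k-1}^{(n)}$ involves only indices $<k$, hence has all upper indices $<k$; an arbitrary word in $W_k$ contributes upper indices all equal to $k$; so the concatenation has non‑decreasing upper indices with each lower index below the corresponding upper one. Torsion‑freeness propagates because $\Z\langle W_k\rangle$ is free over $\Z$ and tensor products of free $\Z$‑modules are free.

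The way I would prove (A) and (B) together is the standard device of constructing the candidate module. Put $M:=YB_{k-1}^{(n)}\otimes\Z\langle W_k\rangle$; define on $M$ an action of each generator $B_{ij}$ of $YB_k^{(n)}$, with the $B_{ik}\in W_k$ acting on the free‑algebra factor and the $B_{ij}$ with $i<j<k$ acting by left multiplication on the $YB_{k-1}^{(n)}$‑factor corrected by a term obtained by commuting $B_{ij}$ past the word in $W_k$ (nonzero only when $i$ or $j$ coincides with a lower index occurring in that word, where relation (2) supplies the correction); then check that these endomorphisms satisfy relations (1)–(3), so $M$ becomes a $YB_k^{(n)}$‑module, and finally observe that $x\mapsto x\cdot(1\otimes 1)$ is a two‑sided inverse to the multiplication map. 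I expect the verification that this action respects the Yang–Baxter relations (2) — where the combinatorics of three shared indices interacts with the free factor — to be the main obstacle; it is essentially Kohno's computation that the kernel of $\mathfrak t_k\to\mathfrak t_{k-1}$ is a free Lie algebra, transported to the associative envelope. (A purely bare‑hands alternative is Bergman's Diamond Lemma applied directly to relations (1)–(3); there the delicate point is instead choosing a terminating term order, since the rewrites that remove a descent in the upper‑index sequence preserve length but not the multiset of indices, so a naive inversion count does not decrease.)

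Independence can, in any case, also be pinned down numerically as a check: since $H^*(F_k(\R^n))$ is Koszul with Poincar\'e series $\prod_{m=1}^{k-1}(1+mx^{n-1})$, its Koszul dual $YB_k^{(n)}$ has Hilbert series $\prod_{m=1}^{k-1}(1-mx^{\,n-2})^{-1}$ over $\Q$ (consistent with the generators $B_{ij}$ lying in degree $n-2$), and the coefficient of $x^{d}$ there equals precisely the number of displayed monomials of degree $d$, since a length‑$\ell$ word in the $m$‑element set $\{B_{1,m+1},\dots,B_{m,m+1}\}$ has degree $\ell(n-2)$ and there are $m^{\ell}$ of them. Hence, once the displayed set is known to span $YB_k^{(n)}$ — which the straightening using relations (2), (3) to kill descents already gives, provided that rewriting terminates — the match of cardinalities with the $\Q$‑dimension forces $\Q$‑linear independence, therefore $\Z$‑linear independence, so the displayed set is a $\Z$‑basis and $YB_k^{(n)}$ is free, in particular torsion free.
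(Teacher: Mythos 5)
The paper itself offers no proof of this Proposition; it is quoted from Cohen--Gitler \cite{CG}, and your argument is essentially the one in that source (and in Fadell--Husseini): the split surjection $YB_k^{(n)}\to YB_{k-1}^{(n)}$ killing the $B_{ik}$, the freeness of the subalgebra on $W_k=\{B_{1k},\dots,B_{k-1,k}\}$ (Kohno's theorem, at the associative level), and the resulting module isomorphism $YB_k^{(n)}\cong YB_{k-1}^{(n)}\otimes\Z\langle W_k\rangle$, which by induction yields the displayed monomials as a $\Z$-basis and torsion-freeness. So the comparison is with the cited reference rather than with anything proved in the text, and your outline is sound; in particular the termination worry for the spanning step is easily dispatched by ordering words first by the number of letters of upper index $<k$ and then by inversions, since the swap $B_{ik}B_{ab}\mapsto\pm B_{ab}B_{ik}+(\text{quadratic correction in }W_k)$ lowers the second entry in the first term and the first entry in the correction term. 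One orientation slip should be fixed: the action you describe (new generators $B_{ik}$ acting on the free factor, old generators $B_{ab}$ acting by left multiplication on the $YB_{k-1}$-factor plus the derivation of $\Z\langle W_k\rangle$ given by the quadratic corrections from relation (2)) is indeed a well-defined left $YB_k^{(n)}$-action --- the verification of relation (2) among the derivations is the Jacobi identity in the free algebra, i.e.\ Kohno's computation, as you say --- but $y\mapsto y\cdot(1\otimes 1)$ is then an exact inverse of the multiplication map taken in the order $w\otimes x\mapsto wx$ with the $W_k$-word on the \emph{left}; on your stated map $x\otimes w\mapsto xw$ it produces $x\otimes w$ plus lower-order corrections (already for $k=3$: $B_{12}B_{23}\cdot(1\otimes 1)=B_{12}\otimes B_{23}+1\otimes[B_{23},B_{13}]$), so it is not literally a two-sided inverse, and as written your construction proves the basis with non-increasing rather than non-decreasing upper indices. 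This is harmless: either run the mirror argument with a right action (appending $W_k$-letters on the right), or apply the order-reversing anti-automorphism of $YB_k^{(n)}$ fixing the generators (the relations are stable under reversal), or note that the corrections are strictly triangular for the filtration by length in the $YB_{k-1}$-factor, so injectivity of the multiplication map still follows. Finally, the Koszul-duality count is a legitimate cross-check (it matches the Hilbert series $\prod_{m=1}^{k-1}(1-mx^{n-2})^{-1}$ implicit in the paper's Poincar\'e series for $\Omega F_k(\R^n)$), but since it leans on the quoted Koszulness it should be regarded as a consistency check rather than an independent proof.
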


The Yang-Baxter algebra has the following geometric meaning for $n>2$: it is  the Pontrjagin ring of the loop space
of the configuration space $F_k(\R^n)$. Notice that under this hypothesis $F_k(\R^n)$ is $(n-2)$-connected.

Let $A_{ij}^* \in H_{n-1}( F_k(\R^n))$ be the homology basis dual to $A_{ij}$.
By the Hurewicz theorem and adjointness we have isomorphisms
$$H_{n-2}(\Omega F_k(\R^n)) \cong \pi_{n-2}(\Omega F_k(\R^n)) \cong
\pi_{n-1}(F_k(\R^n)) \cong H_{n-1}(F_k(\R^n))$$
Let $$B'_{ij} \in H_{n-2}(\Omega F_k(\R^n))$$ be the class corresponding 
to $A_{ij}^*$ under the composite isomorphism.

\begin{Thm} (Cohen-Gitler, Fadell-Husseini) \cite{CG, FH}
There is an isomorphism of algebras
 $$H_*(\Omega F_k(\R^n)) \cong YB_k^{(n)}$$
  for $n>2$ 
 sending $B'_{ij}$ to $B_{ij}$.
\end{Thm}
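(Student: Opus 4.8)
The plan is to argue by induction on $k$, using the Fadell--Neuwirth fibration
$$p_k : F_k(\R^n) \lrh F_{k-1}(\R^n)$$
that forgets the last particle. Its fiber is $\R^n$ with $k-1$ points deleted, which is homotopy equivalent to $\bigvee_{k-1}S^{n-1}$, and $p_k$ admits a section $s$ (append a $k$-th point far from the others, without moving them; note $\pi_{ij}\circ s=\pi_{ij}$ for $i,j<k$, so $s^*A_{ij}=A_{ij}$ there). The base case $k=2$ is $F_2(\R^n)\simeq S^{n-1}$, whose loop space homology is the polynomial ring on one class of degree $n-2$, in agreement with $YB_2^{(n)}\cong\Z[B_{12}]$.

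For the inductive step, apply the loop functor to obtain a split fibration
$$\Omega\bigl(\textstyle\bigvee_{k-1}S^{n-1}\bigr)\lrh \Omega F_k(\R^n)\str{\Omega p_k}{\lrh}\Omega F_{k-1}(\R^n).$$
Since $n>2$, the wedge $\bigvee_{k-1}S^{n-1}$ is the suspension of the connected space $\bigvee_{k-1}S^{n-2}$, so by Bott--Samelson the fiber homology $H_*(\Omega\bigvee_{k-1}S^{n-1};\Z)$ is the tensor algebra $T(x_1,\dots,x_{k-1})$ on $k-1$ generators of degree $n-2$, and under the geometric identifications these generators are the classes $B'_{1k},\dots,B'_{k-1,k}$. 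Composing $\Omega s$ with the loop multiplication on $\Omega F_k(\R^n)$ yields a map over $\Omega F_{k-1}(\R^n)$,
$$\Phi:\Omega\bigl(\textstyle\bigvee_{k-1}S^{n-1}\bigr)\times \Omega F_{k-1}(\R^n)\lrh \Omega F_k(\R^n),\qquad (\alpha,\beta)\longmapsto \iota(\alpha)\cdot(\Omega s)(\beta),$$
which restricts to the fiber inclusion. Comparing the Serre spectral sequences of the trivial fibration on the left and of $\Omega p_k$ along $\Phi$ forces the spectral sequence of $\Omega p_k$ to collapse at $E^2$ with untwisted coefficients, so that additively
$$H_*(\Omega F_k(\R^n);\Z)\cong T(x_1,\dots,x_{k-1})\ot H_*(\Omega F_{k-1}(\R^n);\Z);$$
by the inductive hypothesis this is a free $\Z$-module whose Poincar\'e series equals that of $YB_k^{(n)}$, using the module decomposition $YB_k^{(n)}\cong\bigotimes_{m=2}^{k}T(B_{1m},\dots,B_{m-1,m})$ read off from the given basis.

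To conclude, define $\varphi:YB_k^{(n)}\to H_*(\Omega F_k(\R^n);\Z)$ on generators by $B_{ij}\mapsto B'_{ij}$; one must verify the defining relations $(1)$--$(3)$ for the $B'_{ij}$. The symmetry $(1)$ and the vanishing of commutators of disjointly supported pairs $(3)$ are geometric, as such classes are carried by independent loop coordinates. The Yang--Baxter relation $(2)$ involves only the three indices $i,j,k$, so it may be pulled back from $H_*(\Omega F_3(\R^n);\Z)$ along the map forgetting all other particles, where it is a single identity that can be checked in the now fully computed homology, or obtained by dualizing Arnold's relation $A_{ij}A_{jk}+A_{jk}A_{ki}+A_{ki}A_{ij}=0$ through the bar/cobar duality between $H^*(F_3(\R^n))$ and $H_*(\Omega F_3(\R^n))$ --- which fits the definition of $YB_k^{(n)}$ as the Koszul dual of $H^*(F_k(\R^n))$. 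Granting the relations, $\varphi$ is a well-defined algebra homomorphism; it is onto because the $B'_{ik}$ generate the fiber tensor factor and, since $(\Omega s)_*B'_{ij}=B'_{ij}$ for $j<k$, the remaining $B'_{ij}$ hit the image of the section; and a degreewise-surjective map between free $\Z$-modules of finite type with the same Poincar\'e series is an isomorphism.

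The main obstacle is twofold. First, one must show the looped Fadell--Neuwirth fibration is homologically simple, i.e. that $\pi_1(\Omega F_{k-1}(\R^n))=\pi_2(F_{k-1}(\R^n))$ acts trivially on $H_*(\Omega\bigvee_{k-1}S^{n-1})$; this is automatic for $n>3$ (the base is then $2$-connected) and can be handled for $n=3$ by a Whitehead-product computation, or sidestepped using that $p_k$ is orientable and totally non-homologous to zero, visible from the section and from $A_{ik}$ restricting to a basis of $H^{n-1}$ of the fiber. Second, pinning down the commutator $[B'_{ij},B'_{jk}]$ --- establishing the Yang--Baxter relation rather than merely some relation --- is the genuine computational core; the a priori knowledge of the Poincar\'e series is what guarantees that no further relations can appear.
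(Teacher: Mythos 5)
First, note that the paper does not prove this statement at all: it is quoted as a known theorem of Cohen--Gitler and Fadell--Husseini, so there is no internal proof to compare against. Your strategy (Fadell--Neuwirth fibration with its section, looping, Bott--Samelson for the fiber, additive collapse giving the Poincar\'e series, then a generator-to-generator map checked on relations and finished by a surjectivity-plus-Hilbert-series argument) is in the spirit of the cited sources, and most of the skeleton is sound. But as a proof it has a genuine gap exactly where you yourself locate ``the genuine computational core'': the Yang--Baxter relation $[B'_{ij},B'_{jk}]=[B'_{jk},B'_{ki}]=[B'_{ki},B'_{ij}]$ is never actually established. The two routes you offer do not close it as stated. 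Knowing $H_*(\Omega F_3(\R^n))$ additively (which is all your spectral-sequence/Bott--Samelson argument gives) says nothing about the value of a specific commutator, so ``checked in the now fully computed homology'' is not available. And ``dualizing Arnold's relation through bar/cobar duality'' presupposes that the Pontryagin ring of $\Omega F_3(\R^n)$ is computed by the cobar construction on $H^*(F_3(\R^n))$, i.e.\ a collapse of the Eilenberg--Moore/cobar spectral sequence or a formality/Koszulness input --- which is essentially the content of the theorem for $k=3$, not something you may assume. This is precisely the step that carries the real work in Cohen--Gitler.

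A second, smaller problem is the direction of the reduction to three points: you say the relation ``may be pulled back from $H_*(\Omega F_3(\R^n))$ along the map forgetting all other particles.'' Relations do not pull back along a map out of your space: if $x=[B'_{ij},B'_{jk}]-[B'_{jk},B'_{ki}]$ were nonzero in $H_*(\Omega F_k(\R^n))$ it could still die under the forgetful projection. What works is to push forward along the section $s\colon F_3(\R^n)\to F_k(\R^n)$ that adds the remaining points far away, using that $s_*$ is a Pontryagin-ring map with $s_*B'_{ij}=B'_{ij}$ (which follows from $s^*A_{uv}=A_{uv}$ or $0$ according to whether $u,v\leq 3$). That fix is routine, but it still leaves you needing the $k=3$ computation itself, so the argument as written does not yet prove the theorem; the surjectivity-plus-equal-Poincar\'e-series endgame is fine once the relations are in hand.
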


\begin{Cor}
The Poincare series of $\Omega F_k(\R^n)$ for $n>2$ is 
$$P(\Omega F_k(\R^n)) = \prod_{m=1}^{k-1}(1-mx^{n-1})^{-1}$$
\end{Cor}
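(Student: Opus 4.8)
The plan is to read this off the preceding theorem, which identifies $H_*(\Omega F_k(\R^n)) \cong YB_k^{(n)}$ as graded algebras for $n>2$, together with the explicit additive basis of the Yang--Baxter algebra recalled just above. Since a Poincaré series depends only on the underlying graded vector space, it suffices to compute the Hilbert series of $YB_k^{(n)}$, i.e. to count the basis monomials $B_{i_1 j_1}\cdots B_{i_l j_l}$ with $j_1\le\cdots\le j_l$ and $i_t<j_t$ for all $t$, sorted by their length $l$, and then to pass to topological degree using that each $B_{ij}$ is a generator of fixed degree.

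The count I would organize by the value of the larger index. For each $j\in\{2,\dots,k\}$ there are exactly $j-1$ generators $B_{ij}$ with $i<j$. In a basis monomial the factors with a common larger index occur in one consecutive block, because the larger indices are weakly increasing; inside such a block of length $a$ the smaller indices are otherwise unconstrained, so it contributes $(j-1)^a$ choices, and distinct values of $j$ give independent blocks. Hence the length generating function factors as $\prod_{j=2}^{k}\sum_{a\ge 0}(j-1)^a t^a=\prod_{j=2}^{k}(1-(j-1)t)^{-1}=\prod_{m=1}^{k-1}(1-mt)^{-1}$ after setting $m=j-1$. Substituting the appropriate power of $x$ for the degree contribution of one generator turns this into $\prod_{m=1}^{k-1}(1-mx^{n-1})^{-1}$, which is the asserted Poincaré series of $\Omega F_k(\R^n)$.

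As a cross-check — and an alternative route — one can instead invoke Proposition \ref{ybb}: since $H^*(F_k(\R^n))$ is Koszul with Koszul dual $YB_k^{(n)}$, the two Hilbert series satisfy the standard reciprocity $P_{A^!}(x)=1/P_A(-x)$ in the appropriately normalized weight grading, and feeding in the value $\prod_{m=1}^{k-1}(1+mx^{n-1})$ from the previous corollary reproduces $\prod_{m=1}^{k-1}(1-mx^{n-1})^{-1}$; combined again with $H_*(\Omega F_k(\R^n))\cong YB_k^{(n)}$ this gives the statement. There is no real obstacle: the only points requiring care are verifying that the defining constraints of the basis genuinely decouple over the values of the larger index in the counting argument, and keeping track of the signs and of the grading shift so that the reciprocity identity (or the substitution in the direct computation) is applied in the correct weight grading.
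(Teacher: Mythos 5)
Your proposal is correct and is essentially the paper's own argument: the corollary is read off directly from the Cohen--Gitler/Fadell--Husseini isomorphism $H_*(\Omega F_k(\R^n))\cong YB_k^{(n)}$ together with the additive basis of the Yang--Baxter algebra, your block-by-largest-index count giving exactly $\prod_{m=1}^{k-1}(1-mt)^{-1}$ (the paper also notes, as you do implicitly via Koszul reciprocity, an alternative route through $\Omega F_k(\R^n)\simeq \prod_{m=1}^{k-1}\Omega(\vee_1^m S^{n-1})$). One bookkeeping caveat: the generators $B_{ij}$, i.e.\ the classes $B'_{ij}$, live in degree $n-2$, so the substitution in your count actually produces $\prod_{m=1}^{k-1}(1-mx^{n-2})^{-1}$; the exponent $n-1$ in the displayed statement (which your ``appropriate power of $x$'' step reproduces without checking) appears to be a slip in the statement itself rather than an output of the computation.
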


This follows also from the homotopy equivalence
$$\Omega F_k(\R^n) \simeq \prod_{m=1}^{k-1} \Omega(\vee_1^m S^{n-1}) $$
that is not a loop map in general.

\
For $n=2$ the (ungraded) algebra $YB_k=YB_k^{(2)}$ has also a similar geometric meaning that we explain. Let us start from the following well known result.
\begin{Prop}
The configuration space $F_k(\R^2)$ is a classfying space of the pure braid group on $k$ strands $P\beta_k$.
\end{Prop}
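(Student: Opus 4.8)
The plan is to verify the two ingredients of the assertion separately: that $\pi_1 F_k(\R^2) \cong P\beta_k$, and that $F_k(\R^2)$ is aspherical. The first is essentially the topological definition of the pure braid group (Artin): a loop in $F_k(\R^2)$ is a motion of $k$ distinguishable points in the plane returning to their original positions, i.e.\ a pure braid, and concatenation corresponds to group multiplication; if instead one takes $P\beta_k$ via Artin's presentation, one recovers it from the iterated semidirect product structure described below. So the real content is the second point, which I would prove by induction on $k$ using the Fadell--Neuwirth fibration.

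First I would recall that forgetting the last point defines a fibration
$$ p \colon F_k(\R^2) \lrh F_{k-1}(\R^2), \qquad (x_1,\dots,x_k) \mapsto (x_1,\dots,x_{k-1}), $$
whose fiber over a point is $\R^2 \setminus \{k-1 \text{ points}\}$. This open surface is homotopy equivalent to the wedge $\vee_{k-1} S^1$, hence is a $K(F_{k-1},1)$ for $F_{k-1}$ the free group on $k-1$ generators; in particular all its higher homotopy groups vanish. The base of the induction is $F_1(\R^2)=\R^2$, which is contractible and thus trivially a $K(P\beta_1,1)$ with $P\beta_1$ the trivial group.

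Next I would run the inductive step. Assume $F_{k-1}(\R^2)$ is aspherical. In the long exact homotopy sequence of the fibration $p$,
$$ \cdots \lrh \pi_n\bigl(\textstyle\bigvee_{k-1} S^1\bigr) \lrh \pi_n F_k(\R^2) \lrh \pi_n F_{k-1}(\R^2) \lrh \cdots, $$
both outer terms vanish for every $n \geq 2$, so $\pi_n F_k(\R^2)=0$ for $n \geq 2$. Hence $F_k(\R^2)$ is aspherical, and since its fundamental group is $P\beta_k$ by the identification above, it is a classifying space $B P\beta_k$. (One also reads off from the tail of this sequence, together with the fact that $p$ admits a section, that $P\beta_k \cong F_{k-1} \rtimes P\beta_{k-1}$, exhibiting $P\beta_k$ as an iterated semidirect product of free groups — which is how one matches Artin's presentation if $P\beta_k$ is defined algebraically.)

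I do not expect a genuine obstacle here, as this is a classical result; the only points requiring care are the verification that $p$ is indeed a fibration (Fadell--Neuwirth) with the stated fiber, and the homotopy type of $\R^2$ minus finitely many points as a wedge of circles. If $P\beta_k$ is taken to be defined by generators and relations rather than as $\pi_1 F_k(\R^2)$, the mildly more involved step is to check that the semidirect-product decomposition above reproduces exactly Artin's presentation, which is a standard computation with the generators $A_{ij}$ corresponding to the $i$-th strand looping around the $j$-th.
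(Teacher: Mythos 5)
Your argument is correct and is the classical one: the paper itself offers no proof of this proposition, stating it as a well-known fact, and your Fadell--Neuwirth induction (fiber $\R^2$ minus $k-1$ points $\simeq \vee_{k-1} S^1$ is aspherical, base aspherical by induction, long exact sequence kills $\pi_n$ for $n \geq 2$, and $\pi_1 F_k(\R^2) \cong P\beta_k$ either by definition or via the iterated semidirect product matching Artin's presentation) is exactly the standard justification one would supply. The only points to be careful about are the ones you already flag: that the forgetful map is a genuine fibration (Fadell--Neuwirth) and, if $P\beta_k$ is defined by Artin's presentation, the bookkeeping identifying the generators $A_{ij}$ with loops of the $i$-th point around the $j$-th.
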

Now consider the descending central series of $P\beta_k$ defined by 
$$G_1^k=P\beta_k \text{ and } G_i^k=[G_1^k,G_{i-1}^k].$$ 
The direct sum of the subquotients forms a Lie algebra $$\mathcal{L}_k= \oplus_i (G_i^k / G_{i+1}^k)$$
under the bracket induced by taking commutators.
Let $U$ denote the universal enveloping algebra functor.
\begin{Thm}  \label{descend}   \cite{CG}   
There is an isomorphism 
$YB_k \cong U\mathcal{L}_k$ sending $B_{ij}$ to the Artin generator $\mathcal{A}_{ij}$  of the pure braid group.  
\end{Thm}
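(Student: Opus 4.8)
The plan is to realize the claimed isomorphism as a comparison of two quadratic presentations: $YB_k$ as given, and $U\mathcal{L}_k$ via the classical presentation of the associated graded Lie algebra of the pure braid group. The heart of the matter is that the Yang--Baxter relations are precisely the \emph{linearizations} of the pure braid relations, so that both algebras have ``the same'' presentation; one then promotes the evident surjection to an isomorphism by a Hilbert series count.

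First I would construct the map. Fix the Artin presentation of $P\beta_k=\pi_1(F_k(\R^2))$ with generators $\mathcal{A}_{ij}$, $1\le i<j\le k$, and write $\bar{\mathcal{A}}_{ij}\in G_1^k/G_2^k=H_1(P\beta_k;\Z)$ for their classes; these lie in degree $1$ of $\mathcal{L}_k$, hence in $U\mathcal{L}_k$. Sending $B_{ij}\mapsto\bar{\mathcal{A}}_{ij}$ (consistently with $B_{ij}=B_{ji}$, as $n=2$ is even) defines a homomorphism out of the free associative algebra on the $B_{ij}$; to see it descends to $YB_k$ one verifies the three families of relations. The disjointness relations $(3)$ are immediate from $[\mathcal{A}_{ij},\mathcal{A}_{rt}]=1$ for disjoint index pairs. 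The Yang--Baxter relations $(2)$ come from the remaining pure braid relations of ``conjugation'' type among $\mathcal{A}_{ij},\mathcal{A}_{ik},\mathcal{A}_{jk}$ --- equivalently from the centrality on three strands of the full twist $\mathcal{A}_{ij}\mathcal{A}_{ik}\mathcal{A}_{jk}$, which forces $\mathcal{A}_{ij}$ to commute with $\mathcal{A}_{ik}\mathcal{A}_{jk}$; passing to $G_2^k/G_3^k$, and using that a product maps to a sum in $G_1^k/G_2^k$, turns this into $[B_{ij},B_{ik}+B_{jk}]=0$, which is exactly relation $(2)$. Thus $\phi\colon YB_k\to U\mathcal{L}_k$, $\phi(B_{ij})=\bar{\mathcal{A}}_{ij}$, is well defined; it is surjective because $P\beta_k$ is generated by the $\mathcal{A}_{ij}$, so $\mathcal{L}_k$ is generated in degree $1$ and $U\mathcal{L}_k$ is generated by the $\bar{\mathcal{A}}_{ij}$.

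Injectivity I would obtain by comparing Hilbert series for the natural gradings: $YB_k$ is graded with each $B_{ij}$ in degree $1$ (all three families of relations are homogeneous), $U\mathcal{L}_k$ is graded by total Lie degree, and $\phi$ is degree-preserving. On the $YB_k$ side, grouping the monomials of the graded basis of \cite{CG} by their largest second index $j$ --- so that the block at level $j$ is an unconstrained word in the $j-1$ letters $B_{1j},\dots,B_{(j-1)j}$ --- gives Hilbert series $\prod_{m=1}^{k-1}(1-mt)^{-1}$. On the $U\mathcal{L}_k$ side, the Poincar\'e--Birkhoff--Witt theorem gives $\mathrm{Hilb}(U\mathcal{L}_k)=\prod_{i\ge1}(1-t^i)^{-\phi_i}$ with $\phi_i=\mathrm{rk}(G_i^k/G_{i+1}^k)$, so one needs the lower central series ranks of $P\beta_k$. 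Here I would exploit the iterated fibration structure of $F_k(\R^2)$: the projection $F_k(\R^2)\to F_{k-1}(\R^2)$ is split with fiber $\R^2\setminus\{k-1\text{ points}\}\simeq\vee_{k-1}S^1$, giving a split extension $1\to F_{k-1}\to P\beta_k\to P\beta_{k-1}\to 1$ with $F_{k-1}$ free of rank $k-1$ on which $P\beta_{k-1}$ acts trivially in homology (an ``almost direct product''). By the Falk--Randell theorem the lower central series of such a product splits compatibly, whence $\prod_{i}(1-t^i)^{\phi_i}=\prod_{m=1}^{k-1}(1-mt)$, each free factor $F_m$ contributing the free Lie algebra on $m$ generators, whose enveloping algebra is the free associative algebra with Hilbert series $(1-mt)^{-1}$. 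Thus $\mathrm{Hilb}(U\mathcal{L}_k)=\prod_{m=1}^{k-1}(1-mt)^{-1}$, degreewise equal to $\mathrm{Hilb}(YB_k)$. Since $YB_k$ is torsion free with this Hilbert series (\cite{CG}) and $\mathcal{L}_k$ is free abelian in each degree (again Falk--Randell/Kohno), both sides are finitely generated free abelian of equal rank in each degree, so the degree-preserving surjection $\phi$ is an isomorphism, which also pins down the image of $B_{ij}$ as the class of the Artin generator $\mathcal{A}_{ij}$.

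The main obstacle is the completeness of the linearized relations, i.e. that $\mathcal{L}_k$ has no relations beyond the Yang--Baxter ones; this is exactly what the Falk--Randell lower central series computation supplies, and matching its numerical output with the combinatorics of the $YB_k$ basis is where the real content sits. A conceptual alternative avoiding the explicit count: $H^*(F_k(\R^2))$ is generated in degree $1$ and Koszul (Section \ref{due}), so $YB_k$ is the enveloping algebra of the holonomy Lie algebra $\mathfrak{h}_k$ of the braid arrangement; since $F_k(\R^2)$ is rationally formal (Arnold \cite{Arn}; see Theorem \ref{konlamb}), its associated graded Lie algebra is $\mathfrak{h}_k$ over $\Q$, and Kohno's integral refinement together with the identification of the generators of $\mathfrak{h}_k$ with the classes $\bar{\mathcal{A}}_{ij}$ upgrades this to the stated integral statement.
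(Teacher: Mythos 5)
The paper does not prove this statement at all: Theorem \ref{descend} is quoted from Cohen--Gitler \cite{CG}, so there is no internal proof to compare against. Your reconstruction follows the standard route in the literature (linearize the Artin relations to get a map $YB_k\to U\mathcal{L}_k$, then match Hilbert series using the Falk--Randell splitting of the lower central series of the almost-direct product $P\beta_k\cong F_{k-1}\rtimes P\beta_{k-1}$ against the \cite{CG} monomial basis of $YB_k$), and in outline it is sound: the Yang--Baxter relations of the paper are indeed equivalent to $[B_{ij},B_{ik}+B_{jk}]=0$, the centrality of the $3$-strand full twist does give $(\mathcal{A}_{ij},\mathcal{A}_{ik}\mathcal{A}_{jk})=1$ and hence the relation in $G_2^k/G_3^k$, PBW applies over $\Z$ because each $G_i^k/G_{i+1}^k$ is free abelian by Falk--Randell/Magnus, and a degree-preserving surjection between degreewise finitely generated free abelian groups of equal rank is an isomorphism.

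There is, however, one concretely false step: your claim that relation (3) is ``immediate from $[\mathcal{A}_{ij},\mathcal{A}_{rt}]=1$ for disjoint index pairs.'' Disjoint-index Artin generators do \emph{not} commute in $P\beta_k$ when the pairs interleave; e.g.\ $\mathcal{A}_{13}$ and $\mathcal{A}_{24}$ do not commute in $P\beta_4$, which is exactly the range $k\geq 4$ where relation (3) has content. What is true, and what you need, is weaker: by the Artin--Birman presentation, for $r<i<t<j$ one has $\mathcal{A}_{rt}^{-1}\mathcal{A}_{ij}\mathcal{A}_{rt}=c\,\mathcal{A}_{ij}\,c^{-1}$ with $c$ a commutator of generators, i.e.\ $c\in G_2^k$; hence the group commutator $(\mathcal{A}_{ij},\mathcal{A}_{rt})$ lies in $(G_2^k,G_1^k)\subseteq G_3^k$, so its class in $G_2^k/G_3^k$ vanishes and $[\bar{\mathcal{A}}_{ij},\bar{\mathcal{A}}_{rt}]=0$ in $\mathcal{L}_k$ after all. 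With that substitution the well-definedness of $\phi$ is restored and the rest of your argument (surjectivity from degree-one generation, equality of Hilbert series $\prod_{m=1}^{k-1}(1-mt)^{-1}$ on both sides, injectivity by the rank count) goes through.
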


\section{Formality and filtered models} \label{tre} 

We present the definition of formality in rational homotopy theory and in the non-commutative sense.
We then proceed to describe the non-commutative versions of the filtered models by Halperin and Stasheff, due to El Haouari.

\begin{Def} \cite{HS}
A 
topological space $X$ is rationally {\em formal}  if there 
is a zig-zag of quasi-isomorphisms of commutative differential graded algebras connecting the Sullivan-deRham algebra $A_{PL}(X)$  to 
its cohomology $H^*(A_{PL}(X)) \cong H^*(X,\Q)$ equipped with the trivial differential.
\end{Def}

If $X$ is a manifold this is equivalent to the existence of a 
zig-zag of quasi-isomorphisms between the algebra of de-Rham forms $\Omega^*(X)$ 
and its real cohomology $H^*(X,\R)$. If $X$ is a complex manifold we might use the ring of complex differential forms and
cohomology with complex coefficients. 
Namely the notion of formality in characteristic zero does not depend on the field for connected spaces of finite type (Theorem 6.8 in \cite{HS}).
The same is true in positive characteristic, by a similar proof following the obstruction theory in \cite{EH}.

\

 Kontsevich \cite{Kon} and Lambrechts-Volic \cite{LV} have proved that
the configuration space $F_k(\R^n)$ is formal over $\R$ for 
any $k$ and $n$ (Theorem \ref{konlamb} ).

This approach uses the commutative algebra of real PA forms and proves also the formality of the little discs operads.

\
Arnold had easily proved the formality of $F_k(\C)$  \cite{Arn}. His quasi-isomorphism embeds $H^*(F_k(\C),\C)$ into the algebra of 
complex differential forms $\Omega^*_{\C}(F_k(\C))$  sending $A_{ij}$ to $d(z_i-z_j)/(z_i-z_j)$.

We turn now to the non-commutative case.
\begin{Def}
Let $R$ be a commutative ring. A topological space $X$ is
$R$-formal, or formal over $R$, if the algebra of singular cochains $C^*(X,R)$ is
connected to its cohomology $H^*(X,R)$ by a zig-zag of quasi-isomorphisms of differential graded $R$-algebras. 
\end{Def}

For spaces of finite type with torsion free homology, $\Z$-formality is universal as it implies $R$-formality for any ring $R$.

Here is an application of formality to computations:
let $\Omega X$ and $LX$ be respectively the based and the unbased loop space of $X$. For a graded $R$-algebra $A$ let us denote
by $HH_*(A,A)$ its Hochschild homology with coefficients in itself, that is a graded $R$-module. 

\begin{Prop}
Let $X$ be a simply connected $R$-formal space of finite type.
Then there are isomorphisms of 
graded $R$-modules
\begin{align*}
Tor_{H^*(X,R)}(R,R)  \cong  H^*(\Omega X, R) \\
HH_*(H^*(X,R),H^*(X,R)) \cong H^*(LX,R)
\end{align*}
\end{Prop}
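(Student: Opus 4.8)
The plan is to recognize each side of the proposition as a derived invariant of the differential graded algebra $C^*(X,R)$ which is visibly unchanged under a zig-zag of quasi-isomorphisms of dg algebras, and then to invoke $R$-formality. I would first recall the two classical identifications. Since $X$ is simply connected of finite type, the Eilenberg--Moore theorem applied to the path--loop fibration $\Omega X \to PX \to X$ produces a quasi-isomorphism of chain complexes from the two-sided bar construction $B(R,C^*(X,R),R)$ to $C^*(\Omega X,R)$, hence an isomorphism of graded $R$-modules $H^*(\Omega X,R) \cong \mathrm{Tor}_{C^*(X,R)}(R,R)$, where $R$ is a module over $C^*(X,R)$ via the augmentation given by evaluation at the basepoint (finite type lets us take a degreewise free cochain model, so the bar construction really computes $\mathrm{Tor}$). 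Likewise, a theorem of Jones identifies the cohomology of the free loop space with the Hochschild homology of the cochain algebra, $H^*(LX,R) \cong HH_*(C^*(X,R),C^*(X,R))$, again using simple connectivity and finite type so that the relevant cyclic bar complex converges, with the usual grading conventions.

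Next I would observe that the functor $A \mapsto \mathrm{Tor}_A(R,R)$, with $R$ the augmentation module on both sides, and the functor $A \mapsto HH_*(A,A)$ depend only on the quasi-isomorphism type of the augmented dg algebra $A$. Indeed, each is computed by a functorial bar-type complex, and a quasi-isomorphism $A \to A'$ of augmented dg algebras induces a quasi-isomorphism of these complexes and hence an isomorphism on homology; for a connected $X$ the relevant augmentations are the canonical projections onto $H^0 = R$ and are automatically respected along every link of a zig-zag. Therefore the zig-zag of dg-algebra quasi-isomorphisms $C^*(X,R) \simeq H^*(X,R)$ provided by $R$-formality yields isomorphisms
\[
\mathrm{Tor}_{C^*(X,R)}(R,R) \cong \mathrm{Tor}_{H^*(X,R)}(R,R),
\]
\[
HH_*(C^*(X,R),C^*(X,R)) \cong HH_*(H^*(X,R),H^*(X,R)),
\]
and composing these with the identifications above gives exactly the two asserted isomorphisms.

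The step requiring the most care is the homological algebra of the second paragraph, carried out in the differential graded rather than the classical setting: one must know that $\mathrm{Tor}$ over a dg algebra and Hochschild homology of a dg algebra are genuine derived functors, independent of the chosen resolution and invariant under quasi-isomorphism, and that the coefficient data ($R$ in the first case, $A$ itself in the second) are transported compatibly along each morphism of the zig-zag. Alongside this sits the convergence and grading bookkeeping in the Eilenberg--Moore and Jones theorems, which is precisely where the simple connectivity and finite-type hypotheses on $X$ are used.
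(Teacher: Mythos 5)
Your proposal is correct and follows essentially the same route as the paper: identify $\mathrm{Tor}_{C^*(X,R)}(R,R)$ and $HH_*(C^*(X,R),C^*(X,R))$ with $H^*(\Omega X,R)$ and $H^*(LX,R)$ via Eilenberg--Moore (the paper folds the free loop space case into this citation, where you cite Jones), then use invariance of these derived functors under quasi-isomorphism to replace $C^*(X,R)$ by $H^*(X,R)$ along the formality zig-zag. The extra care you take about augmentations, bar-type resolutions and convergence is exactly what the paper leaves implicit.
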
 
\begin{proof}
The classical work of Eilenberg-Moore shows that 
$$Tor_{C^*(X,R)}(R,R) \cong H^*(\Omega X,R), \, {\rm and}$$ 
$$HH_*(C^*(X,R),C^*(X,R)) \cong H^*(LX,R).$$ Since $X$ is formal we can replace $C^*(X,R)$ by $H^*(X,R)$ in the formula.
\end{proof}

The theory of $R$-formality is studied by El Haouari \cite{EH}, who extends the obstruction theory by Halperin-Stasheff to the non-commutative case.
The key point is the existence of a {\em bigraded model} for the cohomology of an algebra that is then deformed to a {\em filtered  model} of the algebra.
A bigraded module $V=\oplus V^n_k$ has an upper dimensional grading, the {\em degree} $n$, and a lower grading $k$, the resolution {\em level}. 
\begin{Conv}
We work in the category of 
{\em cochain} differential graded algebras ($R$-DGA's) over a commutative ring $R$ with upper grading, such that the differential is homogeneous of degree $1$.
A bigraded $R$-algebra has a differential that is also homogeneous with respect to the lower grading, and lowers it by $1$. 
The tensor algebra on a bigraded module inherits a bigrading as well, and its cohomology too.

 We say that a $R$-DGA $A$ is connected if $H^0(A) \cong R$ is generated by the class of the unit $1_A \in A$. 
In particular a graded $R$-algebra $A$, considered as DGA with trivial differential, is connected when $A^0=R \cdot 1_A \, .$
\end{Conv}

\begin{Prop}  {\rm (2.1.1.of \cite{EH})} \label{res}
Let $k$ be a field. Given a connected graded $k$-algebra $H$, there exists a bigraded $k$-module $V$ 
and a differential $d$ on the tensor algebra $T(V)$ together with a 
quasi-isomorphism 
$$\rho: (T(V),d) \to (H,0)$$
 such that the homology in positive resolution level vanishes, i.e.
$H_+(T(V),d)=0, $ and
$\rho_{|V_0}: V_0 \cong H^+/(H^+\cdot H^+) \to  H^+$ is a splitting of the projection to the indecomposables. 
The algebra $(T(V),d)$ is unique up to isomorphism and it is called the {\em bigraded model} of $H$.
\end{Prop}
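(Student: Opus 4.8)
The plan is to construct $(T(V),d)$ as a minimal free differential graded resolution of $H$ in the category of associative $k$-DGA's, built up one resolution level at a time, parallelling the bigraded model of Halperin--Stasheff \cite{HS} but with graded-commutative free algebras replaced throughout by tensor algebras over $k$. At level $0$ one chooses a graded vector space $V_0$ with an isomorphism onto the indecomposables $H^+/(H^+\cdot H^+)$, lifts it along the projection to a linear map $V_0\to H^+$, and extends multiplicatively to an algebra map $\rho\colon(T(V_0),0)\to(H,0)$; a graded Nakayama argument makes $\rho$ surjective, and since $H$ is connected $V_0$ sits in positive degrees. At level $1$ the kernel of $\rho$ is a two-sided ideal of $T(V_0)$ lying inside its decomposables, and one picks $V_1$ (in resolution level $1$) together with a degree-$1$ map $d\colon V_1\to\ker\rho$ hitting a minimal generating set of this ideal, extended to $T(V_0\oplus V_1)$ as a derivation.

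Inductively, suppose $T(V_{\le k-1})$ is built, with a degree-$1$ derivation $d$ lowering resolution level by $1$ and $H_i(T(V_{\le k-1}),d)=0$ for $1\le i\le k-2$. The cycles of resolution level $k-1$ form a bimodule over $T(V_0)$, and one takes $V_k$ in level $k$ with a map $d\colon V_k\to Z_{k-1}(T(V_{\le k-1}),d)$ whose composite with the projection onto $H_{k-1}(T(V_{\le k-1}),d)$ hits a minimal bimodule generating set, chosen — as one can always arrange — with vanishing linear part, so the model stays bigraded and minimal; extend $d$ as a derivation and set $\rho=0$ on $V_k$. Since $d(V_k)$ consists of cycles, $d^2=0$ on $T(V_{\le k})$.

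To see this resolves $H$, put $T(V)=\bigcup_k T(V_{\le k})$; homology commutes with this filtered colimit. Adjoining $V_k$ turns the old $H_{k-1}$ into zero while leaving the homology in lower levels alone — because $V_k$ sits in level $k$, $d(V_k)$ in level $k-1$, and $d(V_k)$ bimodule-generates the old $H_{k-1}$ — so $H_i(T(V_{\le k}),d)=0$ for $1\le i\le k-1$, hence $H_+(T(V),d)=0$. Also the level-$0$ part of $T(V_{\le k})$ equals $T(V_0)$ for every $k$, and the boundaries landing in it are exactly the two-sided ideal $T(V_0)\cdot d(V_1)\cdot T(V_0)=\ker\rho$, so $H_0(T(V),d)=T(V_0)/\ker\rho\cong H$. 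A short check that $\rho$ is a chain map ($\rho$ kills positive level, and $d$ of level $1$ lands in $\ker\rho$) then yields the quasi-isomorphism $\rho\colon(T(V),d)\to(H,0)$, with $\rho|_{V_0}$ the chosen splitting of the indecomposables; this gives existence.

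For uniqueness, given two bigraded models $\rho\colon(T(V),d)\to H$ and $\rho'\colon(T(W),d')\to H$, one builds a DGA isomorphism $\varphi$ by induction on resolution level with the standard lifting argument of \cite{HS}: on level $0$ the identifications of $V_0$ and $W_0$ with the indecomposables of $H$ give an isomorphism that one corrects, using $H_+(T(W),d')=0$, so that it intertwines the two maps to $H$, and one then lifts successively over the acyclic positive-level part of $T(W)$; minimality (the vanishing linear parts of $d$ and $d'$) forces each $\varphi|_{V_k}\colon V_k\to W_k$ to be an isomorphism, since both sides minimally bimodule-generate the homology being resolved at that stage, so $\varphi$ is an isomorphism on the associated graded and hence an isomorphism. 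The one step that is more than a transcription of the commutative case is the non-commutative homological bookkeeping: ``killing a relation or a syzygy'' now means coning off over a free $T(V_{\le k-1})$-bimodule rather than over a free module, so one must check that adjoining free generators to a tensor algebra and extending the differential as a derivation kills precisely the intended homology and nothing else, and that this can be done while keeping the differential free of a linear part so that the resolution stays genuinely bigraded and minimal. This is where I expect the main work to lie; the rest follows \cite{HS} along the lines of \cite{EH}.
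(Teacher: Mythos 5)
This proposition is not proved in the paper at all---it is quoted from El Haouari \cite{EH}, whose argument is the Halperin--Stasheff inductive construction \cite{HS} transposed to tensor algebras---and your proposal follows essentially that same route: a level-by-level adjunction of generators, with $V_0$ splitting the indecomposables, $V_1$ minimally generating $\ker\rho$ as a two-sided ideal, $V_k$ killing $H_{k-1}$ as an $H$-bimodule, followed by the standard lifting argument for uniqueness. The point you flag as the main remaining work (that the new differentials can be taken with vanishing linear part once minimal generating sets are chosen, which is what makes the uniqueness step close) is indeed the substantive technical point of the cited construction, so your sketch is correct in approach and matches the intended proof.
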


The main example that we need to consider is the bigraded model of the cohomology of the euclidean configuration spaces.

\begin{Exa} \label{boh}
The configuration space cohomology ring $H=H^*(F_k(\R^n),R)$ admits a bigraded model over any commutative ring $R$
in the sense of Proposition \ref{res}.  The model is constructed over $\Z$. Tensoring with $R$ gives the general case.
The Yang-Baxter algebra $YB_k^{(n)}=H^!$ admits a bigrading. The upper grading was described in Proposition \ref{ybb}.
The lower grading is the length of words in the standard generators $B_{i,j}$  minus one. 
We have that $YB_k^{(n)}$ has finite type as a bigraded algebra, i.e. it is finite dimensional in each bidegree. The algebra has not finite type with respect to the upper grading  for $n=2$. Let $\wt{YB_k^{(n)}}$ be the ideal of positive degree elements (we just discard the 0-dimensional part spanned by the unit).
Its bigraded dual $R$-module $V=s\wt{YB_k^{(n)}}^*$, suspended in the upper grading by 1, generates the free bigraded algebra $(T(V),d)= B(YB_k^{(n)})^*$ that is the bigraded dual of the bar construction. The differential $d$ is the derivation of $T(V)$ 
induced by the coproduct $V \stackrel{\mu^*}{\longrightarrow} V \otimes V \subset  T(V)$  
dual to the multiplication $\mu$ of $YB_k^{(n)}$.
The resolution map $\rho: (T(V),d) \to H$ is then defined by
$$\rho(B_{ij}^*)=A_{ij},$$ 
$$\rho((B_{i_1 j_1} \dots B_{i_l j_l})^*)=0  \quad  \forall \,  l>1$$
Since $H$ is Koszul, $\rho$ is a quasi-isomorphism. 

\end{Exa}

\
A bigraded module $A$ has an induced filtration defined by $F_k(A) = \oplus_{i \leq k}  A^*_i$ .
We say that a differential $d$ on $A$ lowers the filtration level by $h$ if $d(F_k(A)) \subset F_{k-h}(A)$.

The key idea of Halperin-Stasheff is to deform the bigraded model of the cohomology $H(A)$ of a DGA $A$ in order to get a {\em filtered model} for the
algebra itself.

\begin{Thm} {\rm (2.2.2 in \cite{EH})}  \label{222}
Let $A$ be a connected DGA over a field $k$. 
 Let $$\rho:(T(V),d) \to H^*(A)$$ be the bigraded model. Then there is a differential $D$  on $T(V)$ and
a quasi-isomorphism
$$\pi: (T(V),D) \longrightarrow A$$  such that 
\begin{itemize}
\item $D-d$ lowers the filtration level by 2;
\item for $x \in V_0, \, [\pi(x)]=\rho(x) \in H^*(A)$. 
\end{itemize}
 If $\pi':(T(V),D') \to A$ is another solution to this problem, then 
there exists an isomorphism $$\phi: (T(V),D) \longrightarrow (T(V),D')$$ such that 
$\phi-id$ lowers the filtration level by 1, and $\pi' \circ \phi \simeq \pi$. Any solution is called a {\em filtered model} of $A$. 
\end{Thm}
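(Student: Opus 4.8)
The plan is to build $D$ and $\pi$ level-by-level along the lower (resolution) grading, exactly as in the Halperin--Stasheff perturbation argument, but keeping track of associativity rather than commutativity. First I would set $D_{|V_0} = d_{|V_0} = 0$ (since $\rho_{|V_0}$ lands in cohomology classes, represented by cocycles in $A$) and choose $\pi$ on $V_0$ by picking, for each basis element $x \in V_0$, a cocycle $\pi(x) \in A$ representing $\rho(x) \in H^*(A)$; extend $\pi$ multiplicatively to the subalgebra $T(V_0)$. This is a DGA map on $T(V_0)$ because the target differential vanishes on cocycles. Then I proceed by induction on $k \geq 1$: assume $D$ and a DGA map $\pi$ have been constructed on $T(V_{<k})$ with $D - d$ lowering filtration by $2$ and $\pi$ inducing $\rho$ in the appropriate sense. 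For $x \in V_k$, the bigraded differential gives $dx \in T(V_{<k})$, and since $d^2 = 0$ in the bigraded model, $d(dx) = 0$; I then need to correct $dx$ by a lower-filtration term $\zeta_x$ so that $Dx := dx + \zeta_x$ squares to zero and so that $\pi(dx + \zeta_x)$ is a coboundary in $A$, after which $\pi(x)$ is chosen to be a bounding cochain.

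The core of the induction is the obstruction calculation. Since $\rho: (T(V),d) \to H^*(A)$ is a quasi-isomorphism and $\pi$ agrees with it through level $k-1$, the cocycle $\pi(dx) \in A$ is null-homologous, so there exists $w_x \in A$ with $\mathrm{d}_A w_x = \pi(dx)$; the real issue is whether $D^2 = 0$ can be maintained and whether the choices can be made so that $\pi$ stays multiplicative. Writing $D = d + D'$ with $D'$ lowering filtration by $\geq 2$, the equation $D^2 = 0$ unwinds to $dD' + D'd + D'D' = 0$; the term $dD' + D'd$ applied to $x \in V_k$ is a $d$-cocycle in $T(V_{<k})$ of positive resolution level, hence a $d$-coboundary because $H_+(T(V),d) = 0$, and that coboundary is precisely $\zeta_x$ (up to the correction coming from $D'D'$, which lives in still lower filtration and is handled by a secondary induction on filtration level within the same $k$). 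The multiplicativity of $\pi$ forces $\pi(Dx)$ to equal $\mathrm{d}_A \pi(x)$; since $\pi$ is already a DGA map on $T(V_{<k})$ and $Dx \in T(V_{<k})$, one checks $\pi(Dx)$ is a cocycle, and its class is $\rho$-image of a decomposable-free element, hence zero, so $\pi(x)$ can be chosen to bound it. That the extended $\pi$ is still multiplicative is automatic because $\pi$ is defined as an algebra map out of a free algebra. The surjectivity of $\pi$ in cohomology comes from $\rho$ being onto $H^*(A)$ and $\pi$ realizing $\rho$ on $V_0$; injectivity follows from a standard filtration/spectral-sequence comparison between $(T(V),D)$ and $(T(V),d) \simeq H^*(A) \simeq H^*(A)$ degenerating because the associated graded of $\pi$ is $\rho$.

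For the uniqueness statement, given two solutions $(T(V),D,\pi)$ and $(T(V),D',\pi')$, I would construct the isomorphism $\phi$ by the same inductive scheme: set $\phi = \mathrm{id}$ on $V_0$ (both $\pi,\pi'$ realize $\rho$ there, so $\pi' - \pi$ is a coboundary on $T(V_0)$, giving the homotopy on level $0$), and inductively define $\phi$ on $V_k$ as $\mathrm{id} + (\text{lower filtration})$, where the correction term is forced by the requirement that $\phi$ be a chain map $D \mapsto D'$; the obstruction to solving for it is again a $d$-coboundary by $H_+ = 0$, and the same acyclicity produces the chain homotopy witnessing $\pi' \circ \phi \simeq \pi$. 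I expect the main obstacle to be purely bookkeeping: organizing the double induction (outer on resolution level $k$, inner on filtration drop) so that the correction terms $\zeta_x$ exist and the equations $D^2 = 0$, $\pi D = \mathrm{d}_A \pi$, $\phi D = D' \phi$ are solved simultaneously, and making sure that working with the tensor (free associative) algebra rather than the free graded-commutative algebra changes nothing essential in the vanishing-of-obstructions step — the key input $H_+(T(V),d) = 0$ from Proposition \ref{res} is what makes everything go through, exactly as in the commutative case of \cite{HS}, and this is carried out in \cite{EH}.
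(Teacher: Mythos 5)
Your overall scheme --- induction over the resolution level, corrections of lower filtration, multiplicative extension from the free (tensor) algebra, the acyclicity $H_+(T(V),d)=0$ as the engine, and the analogous induction for the uniqueness/homotopy statement --- is the right skeleton: it is the Halperin--Stasheff perturbation argument in its associative form, and indeed the paper does not reprove this statement but quotes it as 2.2.2 of \cite{EH} (extending Theorem 4.4 of \cite{HS}).

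There is, however, a genuine error at the heart of your inductive step. You assert that ``the cocycle $\pi(dx)\in A$ is null-homologous'' because $\rho$ is a quasi-isomorphism and $\pi$ agrees with it through level $k-1$, and later that the class of $\pi(Dx)$ is ``hence zero''. Neither is true: $dx$ is exact in $(T(V),d)$ only via $x$ itself, which is not yet in the domain of $\pi$; restricted to $T(V_{<k})$ it is merely a cocycle (after your $D^2$-correction), and its class in $H^*(A)$ has no reason to vanish. This class is exactly the obstruction $\alpha$ of Definition \ref{alpha}, and the entire point of the paper is that it is nonzero for $\E_2^*(4)$, e.g.\ $\alpha((B_{12}B_{23}B_{13})^*)=A_{12}A_{13}+A_{12}A_{23}$. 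The correct inductive step must absorb this class into the deformed differential: one chooses a filtration-zero cocycle $z\in T(V_0)$ with $\rho(z)$ equal to the offending class --- possible because $\rho$ maps $T(V_0)$ onto $H^*(A)$, a consequence of the splitting property in Proposition \ref{res} that your argument never invokes --- and includes $-z$ in $Dx$; only then is $[\pi(Dx)]=0$, so that $\pi(x)$ can be chosen, while $D^2=0$ survives because $D$ vanishes on $T(V_0)$. This is precisely the paper's Definition \ref{iota}, $D(w)=d(w)-\iota[\phi(d(w))]$. As written, your construction (which uses only $H_+(T(V),d)=0$) halts exactly where the interesting phenomenon occurs, and it would wrongly suggest that $D-d$ never needs a filtration-zero component, i.e.\ that the obstruction always vanishes, which is incompatible with Theorem \ref{main}.
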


The following result completes the theory, compare Theorem 5.3 in \cite{HS}.
\begin{Prop}  \label{341}
Let $A,B$ be connected DGA's over a field $k$ with an isomorphism $\bar{\phi}:H(A) \cong H(B)$, and let
$\rho: (T(V),d) \to H(A) \cong H(B)$ be the bigraded model. 
Let $(T(V),D_A), (T(V),D_B)$ be the respective filtered models of $A$ and $B$. Then $A$ and $B$ are quasi-isomorphic
if and only if there exists an isomorphism $\phi:(T(V),D_A) \to (T(V),D_B)$ such that $\phi-id$ lowers the filtration level. 
\end{Prop}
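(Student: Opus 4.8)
The plan is to establish that, for DGA's whose cohomology is identified with $H(A)\cong H(B)$ via the fixed $\bar\phi$, the filtered model is a complete invariant of the quasi-isomorphism type --- the non-commutative counterpart of Theorem~5.3 of \cite{HS} --- and the argument splits into the two implications. For the ``if'' direction I would argue directly: an isomorphism $\phi:(T(V),D_A)\to(T(V),D_B)$ is in particular a quasi-isomorphism of DGA's, so concatenating it with the quasi-isomorphisms $\pi_A:(T(V),D_A)\to A$ and $\pi_B:(T(V),D_B)\to B$ provided by Theorem~\ref{222} produces a zig-zag $A \xleftarrow{\pi_A} (T(V),D_A) \xrightarrow{\phi} (T(V),D_B) \xrightarrow{\pi_B} B$ of quasi-isomorphisms, so $A\simeq B$. (The hypothesis that $\phi-\mathrm{id}$ lowers the filtration level is not needed here; it is present only to make the equivalence sharp.)

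For the ``only if'' direction the heart of the matter is a lifting lemma: if $f:A'\to B'$ is a quasi-isomorphism of connected DGA's and $(T(V),D_{A'},\pi_{A'})$ is a filtered model of $A'$ relative to a resolution $\rho':(T(V),d)\to H(A')$, then $(T(V),D_{A'})$ with structure map $f\circ\pi_{A'}$ is a filtered model of $B'$ relative to $H(f)\circ\rho'$. One checks the two conditions of Theorem~\ref{222}: $f\circ\pi_{A'}$ is a quasi-isomorphism, the operator $D_{A'}-d$ still lowers the filtration level by $2$, and $[f\pi_{A'}(x)]=H(f)\rho'(x)$ for $x\in V_0$; moreover $(T(V),d)$ is a bigraded model of $H(B')$ as well, since $H(B')\cong H(A')$ and the bigraded model depends only on the isomorphism type of the algebra (Proposition~\ref{res}). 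The uniqueness clause of Theorem~\ref{222} then produces an isomorphism between $(T(V),D_{A'})$ and the chosen filtered model of $B'$ relative to $H(f)\circ\rho'$ whose difference with the identity lowers the filtration level. Alongside this I would record two closure facts used repeatedly: a composite of isomorphisms of the form $\mathrm{id}+N$ with $N$ lowering the filtration level is again of this form, and so is the inverse of such an isomorphism, because such an $N$ is locally nilpotent on the exhaustive, non-negatively indexed filtration $F_0\subset F_1\subset\cdots$ of $T(V)$, so $(\mathrm{id}+N)^{-1}=\sum_{i\ge 0}(-N)^i$ is a finite sum on each element.

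Granting these, I would write the quasi-isomorphism between $A$ and $B$ as a zig-zag $A=C_0,C_1,\dots,C_N=B$ of connected DGA's, with, for each $i$, a quasi-isomorphism $g_i$ that is either $C_{i-1}\to C_i$ or $C_i\to C_{i-1}$; let $\alpha_i:H(C_i)\to H(A)$ be the induced composite cohomology isomorphism (so $\alpha_0=\mathrm{id}$ and $\alpha_N=\bar\phi^{-1}$), and set $\rho_i=\alpha_i^{-1}\rho_A$, which exhibits $(T(V),d)$ as a bigraded model of $H(C_i)$. Choosing a filtered model $(T(V),D_i)$ of $C_i$ relative to $\rho_i$, the compatibility relation between $\alpha_{i-1}$, $\alpha_i$ and $H(g_i)$ built into the $\alpha_i$ makes both $(T(V),D_{i-1})$ and $(T(V),D_i)$ into filtered models of one and the same $C_j$ relative to one and the same resolution; by the lifting lemma and the closure facts they are joined by a filtration-lowering isomorphism $\phi_i:(T(V),D_{i-1})\to(T(V),D_i)$ (inverting if necessary). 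Composing the $\phi_i$ gives $\phi:(T(V),D_0)\to(T(V),D_N)$ with $\phi-\mathrm{id}$ filtration-lowering, and a final use of the uniqueness in Theorem~\ref{222} replaces $D_0,D_N$ by the prescribed $D_A,D_B$ at the cost of composing $\phi$ with filtration-lowering isomorphisms on each side.

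The one point I expect to require real care --- more a subtlety than a technical obstacle --- is the bookkeeping of the cohomology identifications: the statement is correct only when the quasi-isomorphism between $A$ and $B$ is taken compatibly with the fixed isomorphism $\bar\phi$, since the filtered model genuinely depends on the chosen resolution $\rho$ (an automorphism of $H(A)$ need not be induced by a self-quasi-isomorphism of $A$). This is the standard convention in the Halperin--Stasheff framework, and it is vacuous in the case of interest $B=(H^*(A),0)$: every graded-algebra automorphism of $H^*(A)$ is itself a quasi-isomorphism, so $\bar\phi$ can be absorbed into the zig-zag. Everything else is routine degree bookkeeping.
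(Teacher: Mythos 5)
Your proposal is correct, and it is worth noting that the paper itself gives no argument for Proposition \ref{341}: it is stated as a quotation of Theorem 5.3 of Halperin--Stasheff (in El Haouari's non-commutative setting), so what you have done is reconstruct the standard proof behind that citation rather than diverge from a written one. Your reconstruction has the right skeleton: the ``if'' direction is the trivial zig-zag $A \leftarrow (T(V),D_A) \rightarrow (T(V),D_B) \rightarrow B$; the ``only if'' direction rests on the lifting observation that a filtered model $(T(V),D_{A'},\pi_{A'})$ of $A'$ becomes, via $f\circ\pi_{A'}$, a filtered model of $B'$ relative to the transported resolution $H(f)\circ\rho'$ (all three conditions of Theorem \ref{222} are checked correctly), on the uniqueness clause of Theorem \ref{222}, and on the closure of isomorphisms of the form $\mathrm{id}+N$ (with $N$ lowering the level) under composition and inversion, the inverse existing because $N$ is locally nilpotent on the exhaustive non-negative level filtration of $T(V)$. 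The bookkeeping with the transported resolutions $\rho_i=\alpha_i^{-1}\rho$ along the zig-zag is exactly what makes consecutive filtered models comparable, and the final adjustment identifying $(T(V),D_0)$, $(T(V),D_N)$ with the prescribed $(T(V),D_A)$, $(T(V),D_B)$ is again the uniqueness clause. Your caveat about realizability is a genuine sharpening of the statement as printed: the ``only if'' direction really requires the zig-zag to induce the fixed $\bar\phi$ (this is the convention in Halperin--Stasheff, where the result is phrased in terms of realizable isomorphisms), and your observation that the caveat is vacuous when one of the two algebras is $(H^*,0)$ --- since every algebra automorphism of $H^*$ is a self-quasi-isomorphism of $(H^*,0)$ and can absorb the discrepancy --- is precisely what justifies the way the proposition is invoked in the proof of Proposition \ref{4}. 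The only points left implicit, both harmless, are that the intermediate terms of the zig-zag are again connected (quasi-isomorphisms preserve $H^0$ and the unit class, so Theorem \ref{222} applies to them) and that signs in the geometric series are irrelevant over a field.
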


We give a first immediate application.

\begin{Def}
A space $X$ is intrinsically $R$-formal if it is $R$-formal, and  any other space with the same cohomology $R$-algebra is also $R$-formal.
\end{Def}

\begin{Thm}
The configuration space $F_k(\R^n)$ is intrinsically formal over any commutative ring $R$ if $n \geq k$.
\end{Thm}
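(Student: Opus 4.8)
The plan is to use the filtered model machinery just developed, specifically the uniqueness statement of Theorem \ref{222} together with Proposition \ref{341}. Recall that $H = H^*(F_k(\R^n),R)$ is generated by the classes $A_{ij}$ in degree $n-1$, so $H^+$ is concentrated in degrees that are multiples of $n-1$, starting at $n-1$. In the bigraded model $(T(V),d)$ of Example \ref{boh}, $V_0$ is spanned by the generators $B_{ij}^*$ dual to the degree-$(n-1)$ classes $A_{ij}$, and $V_k$ for $k \geq 1$ consists of duals of length-$(k+1)$ words in the $B_{ij}$. A length-$\ell$ word in the $B_{ij}$ has upper degree $\ell(n-1)$ in $H^!$-grading, so after the suspension shift the generator in $V_{\ell-1}$ sits in degree $\ell(n-1) - (\ell - 1) = \ell(n-2) + 1$; wait — more carefully, the suspension adds $1$ to the upper grading of the dual of $s\widetilde{YB_k^{(n)}}^*$, and an element of $YB_k^{(n)}$ of word-length $\ell$ has upper degree $\ell(n-2)$, so the corresponding generator of $V_{\ell - 1}$ has upper degree $\ell(n-2)+1$. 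The point to extract is this: for the filtered model, the correction term $D - d$ raises upper degree by $1$ (it is part of a degree-$1$ differential) while lowering the filtration (resolution) level by at least $2$, hence it sends a generator $x \in V_k$ into the span of products of generators in levels summing to $k-2$ whose total upper degree equals $\deg(x)+1$.

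The key step is a dimension/degree count showing this correction term has nowhere to land when $n \geq k$. Since $F_k(\R^n)$ has top nonzero cohomology in degree $(k-1)(n-1)$ (from the Poincaré series $\prod_{m=1}^{k-1}(1+mx^{n-1})$), and since the resolution $(T(V),d) \to H$ is the dual bar construction of a Koszul algebra, the generators of $V$ live only in levels $0, 1, \dots, k-2$ — more precisely $V_k = 0$ for $k \geq k-1$. I would then argue that whenever $n \geq k$, every generator $x \in V_k$ with $k \geq 2$ has upper degree $\deg(x) = (k+1)(n-2)+1$, and any product of lower-level generators into which $D-d$ could send $x$ would need total upper degree $\deg(x)+1 = (k+1)(n-2)+2$ while living in filtration level $\leq k-2$; I claim the minimal upper degree available in filtration level $k-2$ of the (decomposable part of the) tensor algebra already exceeds this bound. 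The cleanest route: generators in level $j$ have upper degree $(j+2)(n-2)+1$, so a product of generators with levels $j_1 + \dots + j_r = k-2$ (each $j_i \geq 0$, $r \geq 2$ for the term to be genuinely new, or $r=1$ with $j_1 = k-2$) has upper degree $\sum_i \big((j_i+2)(n-2)+1\big) = (k-2)(n-2) + 2r(n-2) + r = (k-2 + 2r)(n-2) + r$. For $r \geq 1$ this is at least $(k)(n-2)+1$, and one checks this is strictly greater than $(k+1)(n-2)+2$ exactly when $n-2$ is large relative to... hmm, this inequality goes the wrong way for large $n$, so the honest count must instead observe that the relevant obstruction groups — the cohomology of the derivation complex $\mathrm{Der}(T(V)) \otimes_{T(V)} H$ in the bidegrees where $D-d$ lives — vanish because $V$ is concentrated in too narrow a range of resolution levels when $k \leq n$. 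In fact the cleanest statement is: when $n \geq k$, $F_k(\R^n)$ is $(n-2)$-connected with cohomology only in degrees $0, n-1, 2(n-1), \dots, (k-1)(n-1)$, and $k-1 \leq n-1$, so the cohomology ring is \emph{intrinsically formal by connectivity}: a space that is $(n-2)$-connected with cohomology vanishing above degree $2(n-2)+1 \cdot$...

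Let me restart the count cleanly. The real mechanism: write $N = n-1$. Then $H^* = H^*(F_k(\R^n))$ is generated in degree $N$ with top degree $(k-1)N$, and $k - 1 \leq n - 1 = N$. By Theorem \ref{222} and Proposition \ref{341}, $F_k(\R^n)$ is intrinsically $R$-formal iff for every connected DGA $A$ with $H^*(A) \cong H^*$, the filtered model differential $D$ can be taken equal to the bigraded differential $d$, equivalently iff all the obstructions to extending $d = D_{(0)}$ to a genuine DGA differential vanish, equivalently iff the graded vector space $\mathrm{Der}^{+1}_{\geq 2}(T(V), H^*)$ of derivations of degree $+1$ lowering resolution level by $\geq 2$ is zero. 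Now a derivation of this type is determined by where it sends each generator $x \in V_k$: it must land in the degree-$(\deg x + 1)$, level-$(\leq k-2)$ part of $T(V)$. But every generator of $T(V)$ has degree a multiple of... no — here is the actual point: the bigraded model of a Koszul algebra generated in degree $N$ has $V_k$ concentrated in upper degree exactly $(k+1)(N-1) + 1$? No. For the \emph{Koszul} case, the minimal model is the Koszul resolution: $V_k$ has upper degree $(k+1)N - k$ only if...

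I will state the plan at the level of principle and leave the arithmetic to the proof: First, record that for $n \geq k$ the space $F_k(\R^n)$ is $(n-2)$-connected and its cohomology is concentrated in degrees $jN$ for $0 \leq j \leq k-1$ with $k - 1 \leq N$, so the top degree $(k-1)N$ is $\leq N^2$ but, crucially, $\leq 2N - 1$ fails in general — so connectivity alone does not suffice, and we genuinely need the bigraded model. Second, invoke Example \ref{boh}: the bigraded model $(T(V),d)$ is the dual bar construction of the Koszul algebra $YB_k^{(n)}$, so $V_k = s(\widetilde{YB_k^{(n)}})^*$ in resolution level equal to (word-length $-1$), and a word of length $\ell$ in $YB_k^{(n)}$ has upper degree $\ell(n-2)$, hence the corresponding generator of $V_{\ell-1}$ has upper degree $\ell(n-2)+1$; equivalently $V_k$ lives in a single upper degree $(k+1)(n-2)+1 = (k+1)N - (k+1) + 1 = (k+1)N - k$. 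Third, compute: a decomposable element of $T(V)$ in resolution level $k - 2$ that is a product of $r \geq 1$ generators from levels $j_1, \dots, j_r$ with $\sum j_i = k-2$ has upper degree $\sum \big((j_i + 2)N - (j_i + 1)\big) = (k-2)N + 2rN - (k - 2) - r = (k - 2 + 2r)N - (k - 2 + r)$. The obstruction wants this to equal $\big((k+1)N - k\big) + 1 = (k+1)N - k + 1$. Equate: $(k - 2 + 2r)N - (k-2+r) = (k+1)N - k + 1$, i.e. $(2r - 3)N = r - 3$, i.e. $N = (r-3)/(2r-3)$. For $r \geq 1$ the right side is negative or, at $r = 1$, equals $2$, at $r = 2$ equals $-1$, at $r = 3$ equals $0$, and for $r \geq 4$ lies strictly between $0$ and $1$; the only way to get $N \geq 1$ (which always holds since $N = n - 1 \geq 1$) is $r = 1$, $N = 2$, i.e. $n = 3$. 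Fourth, handle the sole exceptional case $n = 3$ (with $k \leq 3$): here one checks directly, using that $F_2(\R^3) \simeq S^2$ and $F_3(\R^3)$ are formal and intrinsically so by low-dimensional/connectivity reasons, that no nonzero obstruction derivation of the required type exists — the $r=1, N=2$ solution would need a generator of $V_{k-2}$ in the right degree but for $k \leq 3$, $k - 2 \leq 1$, and level-$0$, level-$1$ generators have a degree gap that rules it out. Conclude: for all $n \geq k$, the only filtered model is $(T(V), d)$ itself, so by Proposition \ref{341} every DGA with cohomology $H^*(F_k(\R^n), R)$ is quasi-isomorphic to $(H^*, 0)$ after tensoring the $\Z$-model with $R$, proving intrinsic $R$-formality.

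The main obstacle I anticipate is pinning down precisely which bidegrees the filtered-model correction term $D - d$ can occupy — that is, getting the bigrading of $V$ exactly right from the Koszul/bar-construction description in Example \ref{boh}, including the suspension shift — and then making the arithmetic inequality airtight over \emph{all} rings $R$ (where "dimension counts" must be replaced by the statement that the relevant graded $R$-module of derivations is literally zero, which follows because it is zero over $\Z$ by the degree count and the $\Z$-model is free). A secondary subtlety is the genuinely exceptional-looking case $n = 3$, $k = 3$ thrown up by the equation $N = (r-3)/(2r-3)$ at $r = 1$: I expect this to be a false alarm — the putative obstruction lands in a bidegree where $V$ is actually zero for $k = 3$ — but it must be checked by hand rather than waved away.
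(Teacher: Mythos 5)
Your overall framework (take the bigraded model of Example \ref{boh}, pass to the filtered model of Theorem \ref{222}, and try to kill the correction term $D-d$ by a bidegree count) is the right starting point, but the execution has genuine gaps. First, the arithmetic in your key step is wrong: a generator of level $j$ sits in upper degree $(j+1)(n-2)+1=(j+1)N-j$, not $(j+2)N-(j+1)$ (you state the correct formula in your ``Second'' step and then use the shifted one in the ``Third''), and the resulting equation $(2r-3)N=r-3$ is also mis-derived; moreover you only test monomials whose levels sum to exactly two less than the level of the generator, whereas $D-d$ may land in any lower filtration level. Done correctly (an $r$-fold product of generators of levels $j_1,\dots,j_r$ has degree $(\,\sum_i j_i+r)(n-2)+r$, to be compared with $\deg x+1$ under the constraint $\sum_i j_i\le \ell-2$), the count does \emph{not} say that no term can ever occur: for instance, for $n=3$ a generator of level $\ell\ge 3$ admits degree- and level-admissible cubic terms in level $\ell-3$. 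So the claim your proof rests on --- that the only filtered model is $(T(V),d)$ itself --- is false in general; what the count actually yields is only $D=d$ on $V_{<n}$, a statement in which the number of points $k$ never appears. Second, your fallback for the ``exceptional'' case uses the assertion that $V$ vanishes in levels $\ge k-1$; this is false, since $V$ is dual to the Yang--Baxter algebra, which is infinite dimensional (already for two strands it is polynomial on $B_{12}$), so $V$ has generators in every level.

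The missing idea --- and the only place where the hypothesis $n\ge k$ actually enters the paper's argument --- is the final step: the top nonzero cohomology degree of any space $Y$ with $H^*(Y,R)\cong H^*(F_k(\R^n),R)$ is $(k-1)(n-1)<n(n-1)$, while $Dx$ for $x\in V_{\ge n}$ has degree at least $n(n-1)$; hence the partial map $(T(V_{<n}),D)\to H$, which coincides with $\rho$ there because $D=d$ on $V_{<n}$, extends by zero on $V_{\ge n}$ to a quasi-isomorphism $(T(V),D)\to H$. It is this extension, not the vanishing of $D-d$ everywhere, that gives formality of $Y$ (and then intrinsic formality, with the model built over $\Z$ and tensored with $R$). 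Your proposal has no substitute for this step, and without it even a corrected bidegree count proves nothing; indeed, since your equations never involve $k$, they could not by themselves detect the hypothesis $n\ge k$ at all.
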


\begin{proof}

The generators of the bigraded model $(T(V),d)$ from example \ref{boh}   for $H=H^*(F_k(\R^n),R)$ have degree (upper index) $$n-1,2n-3, \dots, 1 + i(n-2), \dots  $$
since $V \cong s\wt{(YB_n)}^*$, 
and have respective level (lower index) $$0,1,2,3,\dots, i-1, \dots $$
Let us consider the filtered model $(T(V),D)$ for the singular cochain algebra $C^*(Y,R)$ of a space $Y$ with 
$H^*(Y,R) \cong H$.  This exists over $\Z$ and is obtained deforming $d$ as in the proof of Theorem 4.4 in  \cite{HS} .  
The tensor product with $R$ gives a model over $R$.
We have that $D-d$ lowers the filtration level by 2.
All generators are in degree $1$ mod $n-2$, and so their differentials are in degree $2$ mod $n-2$, that is $2,n,2n-2, \dots$
For a generator $v$ of level $i-1$, and thus degree $1+i(n-2)$, a homogeneous non-zero monomial can be a term of $(D-d)(v)$  only if it has degree 2 mod $n-2$. 
But quadratic monomials $v_a v_b$, with $v_a \in V_{a-1}$ and $b \in V_{b-1}$,  have to be ruled out because they would have degree 
$$2+i(n-2)=(1+a(n-2))+(1+b(n-2)),$$
 thus filtration level
 $(a-1)+(b-1)=i-2$, contradicting that $D-d$ lowers the filtration level by two. The monomial is then the product of at least $2+(n-2)=n$ generators, that is in 
 degree at least $$n+n(n-2) = 2+(n+1)(n-2),$$ so $i \geq n$. Then $d=D$ on $V_{< n}$.
 The top cohomology of the configuration space (and $Y$) is in dimension $(k-1)(n-1)<n(n-1)$.
 The latter is the upper degree of $dx$, for $x \in V_n$, that is less than or equal to the degree of $dx$, for $x \in V_m$ and  $m \geq n$.
 So we can extend the partial resolution $(T(V_{< n}),D) \to H$ by the trivial homomorphism $V_{\geq n} \to H$ to a quasi-isomorphism
$(T(V),D) \to H$. 

\end{proof}

\begin{Rem}
A special case not covered by the theorem
 is that of $F_3(\R^2) \simeq S^1 \times (S^1 \vee S^1)$, which is 
$\Z$-formal , (and therefore $R$-formal for any commutative ring $R$) since wedges and products preserve formality \cite{HS}. 
\end{Rem}

In order to prove the non-formality of $F_k(\R^2)$ (Theorem \ref{main}) over $\Z_2$ for $k>3$ we introduce simplicial versions of these configuration spaces in the next section.

\section{The Barratt-Eccles complex} \label{quattro}

We recall the Smith simplicial model for planar configuration spaces, obtained by filtering the Barratt-Eccles complex \cite{BF}.
For simplicity we write $F_k:=F_k(\R^2)$, and we adopt $\Z_2$-coefficients throughout although we do not state it explicitly.

\begin{Def}
The full Barratt-Eccles complex on $k$ elements is 
 the geometric bar construction of the symmetric group on $k$ letters $W\Sigma_k$.

This is a simplicial set with 
$(W\Sigma_k)_l = (\Sigma_k)^{l+1}$, so any element is a string of permutations.  
The face operators delete a single permutation in the string, and the degeneracies double a permutation in the string.

The symmetric group $\Sigma_k$ acts diagonally levelwise.
\end{Def}

In particular $W\Sigma_2$ has 2 non-degenerate simplices in each degree
that are 
$$(12|21|12|..), \quad (21|12|21|..).$$
Consider the sub-simplicial set $\mathcal{F}_t(W\Sigma_2) \subset W\Sigma_2$ spanned
by non-degenerate simplices of dimension at most $t-1$.

The geometric realization $|\mathcal{F}_t(W\Sigma_2)| $ has the homotopy type of 
 the $(t-1)$-sphere.

 We can extend the filtration to any $k>2$ as follows.
 
There are simplicial  versions of the projections seen in section \ref{due}, that we still denote
$$\pi_{ij}:W\Sigma_k \to W\Sigma_2, \text{ for } 1 \leq i \neq j \leq k .$$

Levelwise these maps are powers of the function $\pi_{ij}:\Sigma_k \to \Sigma_2$  defined by 
$$
\pi_{ij}(\sigma)=\begin{cases}
(12) \text{ if }\sigma^{-1}(i)<\sigma^{-1}(j) \\
(21) \text{ if }\sigma^{-1}(i)>\sigma^{-1}(j). 
\end{cases} $$

\begin{Def}
The $t$-th stage 
$\mathcal{F}_t(W\Sigma_k)$
of the filtration of the Barratt-Eccles complex is defined by

$$\mathcal{F}_t(W\Sigma_k)_l=\bigcap_{i,j} \pi_{ij}^{-1}(\mathcal{F}_t(W\Sigma_2)_l)$$
\end{Def}

\begin{Prop} \label{kashi} \cite{BF}    
The geometric realization of the simplicial set 
$\mathcal{F}_t(W\Sigma_k)$ has the homotopy type
of the configuration space $F_k(\R^t)$ ,
and the realization of $\pi_{ij}$ up to homotopy corresponds to the projection $\pi_{ij}:F_k(\R^t) \to S^{t-1}$ considered in section \ref{due}.
\end{Prop}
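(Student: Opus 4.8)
The statement to prove is Proposition \ref{kashi}: the geometric realization of $\mathcal{F}_t(W\Sigma_k)$ has the homotopy type of $F_k(\R^t)$, with the realization of $\pi_{ij}$ corresponding to the direction map.

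\medskip

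The plan is to build a map directly at the simplicial/topological level and verify it is a weak equivalence by induction on $k$, using the fibration tower on both sides. First I would recall that $|W\Sigma_k|$ is contractible (it is $E\Sigma_k$, the bar construction on a group), so the full Barratt--Eccles complex itself carries no information; everything is in the filtration. The key geometric input is the case $k=2$: $|\mathcal{F}_t(W\Sigma_2)|$ is stated to be $S^{t-1}$, which one checks by hand — $\mathcal{F}_t(W\Sigma_2)$ has exactly two nondegenerate simplices in each dimension $\le t-1$ (the alternating strings $(12|21|12|\cdots)$ and $(21|12|21|\cdots)$), arranged so that the realization is the standard CW structure on $S^{t-1}$ with two cells in each dimension. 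This should be spelled out explicitly, identifying the attaching maps, since it is the base of the induction and also pins down the correspondence $|\pi_{ij}| \simeq \pi_{ij}$ for the direction map to $S^{t-1}$.

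\medskip

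Next I would set up the inductive step. On the topological side, forgetting the last point gives a fibration $F_k(\R^t) \to F_{k-1}(\R^t)$ with fiber $\R^t$ minus $k-1$ points, i.e. $\bigvee_{k-1} S^{t-1}$; the direction maps $\pi_{i,k}$ for $i<k$ detect the fiber. On the simplicial side, the projection $W\Sigma_k \to W\Sigma_{k-1}$ forgetting the letter $k$ restricts to a map $\mathcal{F}_t(W\Sigma_k) \to \mathcal{F}_t(W\Sigma_{k-1})$; I would argue this is a (simplicial) fibration, or at least that its realization is a quasifibration, by analyzing the fiber over a vertex. The fiber should be identified, using the defining formula $\mathcal{F}_t(W\Sigma_k)_l = \bigcap_{i,j}\pi_{ij}^{-1}(\mathcal{F}_t(W\Sigma_2)_l)$, with the sub-simplicial set cut out by only the constraints $\pi_{i,k}$, $i<k$ — a wedge of $(t-1)$-spheres realization — since the constraints among the first $k-1$ letters are exactly what defines $\mathcal{F}_t(W\Sigma_{k-1})$ downstairs. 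Then one compares the Serre spectral sequences, or better, constructs compatible maps: choosing a map $|\mathcal{F}_t(W\Sigma_k)| \to F_k(\R^t)$ covering the inductively-constructed $|\mathcal{F}_t(W\Sigma_{k-1})| \to F_{k-1}(\R^t)$ and inducing the identification on fibers (both being built from the $k=2$ case via the $\pi_{i,k}$), and conclude by the five lemma on homotopy groups / long exact sequences of the fibrations.

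\medskip

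The main obstacle I expect is the fibration claim on the simplicial side: $W\Sigma_k \to W\Sigma_{k-1}$ is a fibration (it is $E$ applied to a surjection of groups), but its restriction to the filtration stages need not obviously be a fibration, because the filtration is defined by a nonlinear intersection of preimage conditions. I would handle this by showing the restricted map is still a Kan fibration — checking the horn-filling condition using that $\mathcal{F}_t(W\Sigma_2) \subset W\Sigma_2$ is itself "fibrant enough" in the relevant range — or, if that is delicate, by falling back to Dold's quasifibration criterion after realization, verifying that the point-inverses are all homotopy equivalent via the explicit combinatorics. Alternatively one can sidestep fibrancy entirely and compute $H_*(|\mathcal{F}_t(W\Sigma_k)|)$ directly from the simplicial chain complex, matching it with the known Poincaré series $\prod_{m=1}^{k-1}(1+mx^{t-1})$ of $F_k(\R^t)$ and then promoting the homology isomorphism induced by the comparison map to a homotopy equivalence using simple-connectivity (for $t\ge 3$) or, for $t=2$, the identification of both sides as $K(P\beta_k,1)$'s via the induced map on $\pi_1$. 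Since this proposition is cited from \cite{BF}, in the paper I would keep the argument brief and lean on that reference for the fibrancy technicalities, giving full detail only for $k=2$ and for the identification of $|\pi_{ij}|$ with the direction map, as those are what the later obstruction-theory sections actually use.
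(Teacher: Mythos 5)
The paper offers no argument for this statement: it is quoted from \cite{BF}, where it rests on Berger's comparison of the complexity filtration of $W\Sigma_k$ with cellular decompositions of $F_k(\R^t)$ indexed by the complete graph operad (going back to Smith and Kashiwabara). Your fibration-tower induction is therefore a genuinely different route, but as sketched it has gaps at exactly the two points that make the result hard. First, there is no natural map in either direction between $|\mathcal{F}_t(W\Sigma_k)|$ and $F_k(\R^t)$; your inductive step begins by ``choosing a map covering the inductively constructed one and inducing the identification on fibers,'' but producing such a lift presupposes a fibration property or some other source for the map, which is precisely what is in question. The published proofs avoid this by building a zig-zag through intermediate cellular models rather than a direct map. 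Second, the projection deleting the letter $k$ is not $E$ of a group homomorphism ($\Sigma_k\to\Sigma_{k-1}$, forgetting a letter, is only a map of sets), and its restriction to the complexity filtration has no evident Kan or quasifibration property: Dold's criterion requires more than abstract homotopy equivalence of point-inverses (one must check that fibers include into preimages of distinguished sets by equivalences), and verifying this over simplices of $\mathcal{F}_t(W\Sigma_{k-1})$ in all dimensions is essentially the content of the theorem, not a technicality that can be deferred to the reference you are trying to prove.

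Moreover, the identification of the vertex fiber is itself unproved: the simplicial fiber over a vertex $\sigma$ consists of strings of permutations all restricting to $\sigma$ on $\{1,\dots,k-1\}$, with only the position of the letter $k$ varying subject to the complexity bound on the pairs $(i,k)$, and showing that its realization is $\bigvee_{k-1}S^{t-1}$ for $k\ge 3$ is a computation of the same nature as the statement being proved (only the case $k=2$, with two cells per dimension, is immediate). The homological fallback suffers from the same defect: without a comparison map, matching Poincar\'e series gives no equivalence, and for $t=2$ one would additionally have to identify $\pi_1$ of the realization with $P\beta_k$, which is not addressed. So either cite \cite{BF} (with Berger, Smith, Kashiwabara) as the paper does, or carry out the complete-graph-operad comparison; the induction as you have set it up does not close, although your explicit treatment of the $k=2$ case and of the compatibility of $|\pi_{ij}|$ with the direction map is correct and is indeed what the later sections use.
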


Let $N^*$ denote the normalized cochain functor from simplicial sets to differential graded algebras (over $\Z_2$ in our case).

\begin{Def}
The Barratt-Eccles  DG-algebra with {\em complexity} $t$ and {\em arity} $k$ is 
$$\mathcal{E}_t ^*(k):= N^*(\mathcal{F}_t(W\Sigma_k)).$$
\end{Def}

Here $k$ is the {\em arity} of the Barratt-Eccles operad, that is equal to the number of points in the configuration.
The upper index indicates the degree.
We are interested mainly in $t=2$: a string of permutations is a generator of $\mathcal{E}_2$ if any two indices do not swap more than once.
For example $(1 2 3| 1 3 2| 32 1)^* \in \mathcal{E}_2^2(3)$ but $(1 2 3| 3 2 1| 2 3 1)^* \notin \mathcal{E}_2^2(3)$ because the indices $2$ and $3$ swap twice.

\
Observe that each $\mathcal{E}^i_t(k)$   is a free module over $\Z_2[\Sigma_k]$, so that $k!$ divides its 
dimension over $\Z_2$ (this holds for any coefficient ring).
For fixed $t$ and $k$ the top dimensional non-trivial vector space is in degree $i=(t-1)\binom{k}{2}$. 

To get a sense of the numbers involved we present some cases of the generating polynomial
$$P_t^k(x) = \sum_i  dim_{\Z_2}(\E^{i}_t(k)) x^i $$  

\begin{align*}
& P_2^2 (x) = 2(1+x)  \\
& P_2^3(x)= 6(1+5x+6x^2+2x^3)\\
& P_2^4(x)=24(1+23x+104x^2+196x^3+184x^4+86x^5+16x^6)\\
& P_3^2(x)=2(1+x+x^2) \\
& P_3^3(x)=6(1+5x+25x^2+60x^3+70x^4+38x^5+8x^6 ) \\
& P_3^4(x)=24(1+23x+529x^2+5550x^3+30214 x^4 + 97048 x^5 +    \dots    )  
\end{align*}

Let us denote the elements of the symmetric group $\Sigma_3$ 
by the following letters:
$$A=(123), B=(132), C=(213), D=(231), E=(312), F=(321).$$

Then $\mathcal{E}_2^2(3)$ is the free $\Z_2[\Sigma_3]$-module on the 6 simplices
$$(A|D|F)^*, \, (A|E|F)^*, \, (A|C|F)^*, \, (A|C|D)^*, \, (A|B|F)^*, \, (A|B|E)^* ,$$
hence it is 36-dimensional over $\Z_2$.

By Proposition \ref{kashi}
we have quasi-isomorphic DGA's
$\mathcal{E}_2^*(k) \simeq C^*(F_k(\R^2))$.

\begin{Prop}
The non-formality of $\E_2^*(k)$ for $k \geq 4$ is equivalent to the statement of Theorem \ref{main}.
\end{Prop}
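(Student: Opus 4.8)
The plan is to establish the equivalence in two directions, using the quasi-isomorphism $\mathcal{E}_2^*(k) \simeq C^*(F_k(\R^2))$ from Proposition \ref{kashi} together with the standard fact that formality (in the associative sense) is invariant under zig-zags of quasi-isomorphisms of DGA's. First I would recall that, by definition, $F_k(\R^2)$ is $\Z_2$-formal if and only if $C^*(F_k(\R^2),\Z_2)$ is connected to $H^*(F_k(\R^2),\Z_2)$ by a zig-zag of DGA quasi-isomorphisms, and that $\E_2^*(k)$ is non-formal if and only if no such zig-zag connects $\E_2^*(k)$ to its own cohomology $H^*(\E_2^*(k))$. So the whole statement reduces to showing these two conditions are logically contradictory — equivalently, that $\E_2^*(k)$ is formal iff $C^*(F_k(\R^2))$ is.

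The key step is to observe that $\E_2^*(k) \simeq C^*(F_k(\R^2))$ already gives a zig-zag of DGA quasi-isomorphisms between the two algebras, hence an induced isomorphism of cohomology algebras $H^*(\E_2^*(k)) \cong H^*(F_k(\R^2),\Z_2)$. Therefore a zig-zag from $C^*(F_k(\R^2))$ to $H^*(F_k(\R^2),\Z_2)$ can be concatenated with the (reversed) zig-zag $\E_2^*(k) \simeq C^*(F_k(\R^2))$ to produce a zig-zag from $\E_2^*(k)$ to $H^*(F_k(\R^2),\Z_2) \cong H^*(\E_2^*(k))$, and conversely. One should be slightly careful that "zig-zag of quasi-isomorphisms" is a transitive relation (it obviously is, by composition of chains of arrows) and that the cohomology identification is through the canonical comparison map, so that "formal" really means "quasi-isomorphic to its own cohomology with trivial differential" on both sides; once this bookkeeping is in place the equivalence is immediate. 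Finally I would remark that the case $k \geq 4$ is exactly the range where Theorem \ref{main} asserts non-formality, so the two statements coincide verbatim under this equivalence.

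I do not expect a genuine obstacle here — this is a formal (pun intended) bookkeeping argument. The only point requiring a little care is to make sure the identification $H^*(\E_2^*(k)) \cong H^*(F_k(\R^2),\Z_2)$ used to phrase the non-formality of $\E_2^*(k)$ is the one induced by the quasi-isomorphism of Proposition \ref{kashi}, so that no ambiguity about "which cohomology algebra" can creep in; since $N^*$ is a lax symmetric monoidal functor and the realization of $\mathcal{F}_2(W\Sigma_k)$ is homotopy equivalent to $F_k(\R^2)$, this identification is the natural one and is an algebra isomorphism, which is all that is needed.
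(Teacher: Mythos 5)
Your argument is correct and is exactly the one the paper has in mind: the Proposition is stated right after the observation that Proposition \ref{kashi} gives a zig-zag of DGA quasi-isomorphisms $\E_2^*(k) \simeq C^*(F_k(\R^2))$, and formality is transferred along such zig-zags precisely as you describe. The paper leaves this bookkeeping implicit, so your write-up simply makes the same (correct) reasoning explicit.
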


For $1 \leq i \neq i \leq k$ consider the 1-cochain
$$\omega_{ij} = \pi_{ij}^*((12|21)^*) \in \mathcal{E}^1_2(k).$$

For example for $k=3 \;\omega_{ij}$ is the sum of 9 generators, and for $k=4$ it is the sum of 144 generators.

Consider the 1-cochain $$Ar:=(B|E)^* \,  \in \mathcal{E}^1_2(3)  $$

\begin{Lem} 
A resolution of the Arnold relation for 3 points on the cochain level is given by
$$d(Ar)=\omega_{13}\omega_{12}+\omega_{23}\omega_{12}+
\omega_{23}\omega_{13}$$
\end{Lem}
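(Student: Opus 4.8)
The plan is to verify the identity by direct computation in the normalized cochain complex $N^*(\mathcal{F}_2(W\Sigma_3))$ over $\Z_2$, using the explicit combinatorial description of $\mathcal{E}_2^*(3)$. First I would recall that for a simplicial set $X$ the normalized cochain differential is dual to the alternating (here, over $\Z_2$, just summed) face maps, so that for a $1$-simplex $(\sigma_0|\sigma_1)$ we have $\langle d\phi, (\sigma_0|\sigma_1|\sigma_2)\rangle = \langle \phi, (\sigma_1|\sigma_2)\rangle + \langle \phi, (\sigma_0|\sigma_2)\rangle + \langle \phi, (\sigma_0|\sigma_1)\rangle$, with the convention that a string that is not in $\mathcal{F}_2$ (i.e.\ has some pair of indices swapping twice) or is degenerate contributes $0$. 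Applying this to $Ar = (B|E)^* = (132|312)^*$, I compute $d(Ar)$ as the sum of all non-degenerate $2$-simplices $(\sigma_0|\sigma_1|\sigma_2)$ of $\mathcal{F}_2(W\Sigma_3)$ which have $(B|E)$ as one of their three faces; equivalently, $d(Ar) = \sum (\sigma_0|\sigma_1|\sigma_2)^*$ over triples with $\{(\sigma_0|\sigma_1),(\sigma_0|\sigma_2),(\sigma_1|\sigma_2)\} \ni (B|E)$.

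Next I would compute the right-hand side. Each $\omega_{ij}$ is $\pi_{ij}^*((12|21)^*)$, so $\langle \omega_{ij}, (\sigma_0|\sigma_1)\rangle = 1$ exactly when $\pi_{ij}(\sigma_0) = (12)$ and $\pi_{ij}(\sigma_1) = (21)$, i.e.\ when $i$ precedes $j$ in $\sigma_0^{-1}$-order but follows $j$ in $\sigma_1^{-1}$-order. The cup product in $N^*$ is the (non-symmetric) Alexander–Whitney product: $\langle \phi \smile \psi, (\sigma_0|\sigma_1|\sigma_2)\rangle = \langle \phi,(\sigma_0|\sigma_1)\rangle \cdot \langle \psi,(\sigma_1|\sigma_2)\rangle$ for $\phi,\psi$ of degree $1$. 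Hence for a $2$-simplex $(\sigma_0|\sigma_1|\sigma_2)$, the term $\omega_{ab}\omega_{cd}$ evaluates to $1$ iff $a$ overtakes $b$ going from $\sigma_0$ to $\sigma_1$ and $c$ overtakes $d$ going from $\sigma_1$ to $\sigma_2$. So I would tabulate, for each of the $36$ potentially-contributing $2$-simplices (or more efficiently, directly over the non-degenerate $2$-simplices of $\mathcal{F}_2(W\Sigma_3)$), the value of $\omega_{13}\omega_{12} + \omega_{23}\omega_{12} + \omega_{23}\omega_{13}$, and check that the resulting $\Z_2$-cochain agrees termwise with $d(Ar)$.

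The main obstacle is bookkeeping: keeping the Alexander–Whitney ordering straight (the product is not commutative at the cochain level, so $\omega_{13}\omega_{12}$ and $\omega_{12}\omega_{13}$ are genuinely different, and one must confirm the stated ordering is the correct one) and correctly discarding strings that leave $\mathcal{F}_2$ or are degenerate. A good organizing device is to note that a non-degenerate $1$-simplex $(\sigma_0|\sigma_1)$ with $\sigma_0 \neq \sigma_1$ records a set of transpositions-of-adjacency (the pairs $(i,j)$ that swap relative order), so $\omega_{ij}$ "detects" the pair $\{i,j\}$ when its order reverses, and one checks the three right-hand monomials against the three faces of each $2$-simplex. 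The symmetry breaking of the plane means only the orientation-consistent pairs survive; the Arnold relation $A_{12}A_{23}+A_{23}A_{13}+A_{13}A_{12}=0$ (equivalently, in the notation of relation~(3) above with signs trivial mod $2$) is exactly what is being lifted, so the cohomology class of the right-hand side vanishes, which is a sanity check that the coboundary computation can close. I expect the verification to reduce, after sorting by which pair of indices is involved, to a handful of short case checks.
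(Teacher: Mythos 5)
Your plan is correct and is essentially the paper's own argument: both compute $d(Ar)$ as the sum of the complexity-2 non-degenerate $2$-simplices having $(B|E)$ as a face, and both determine the supports of $\omega_{13}\omega_{12}$, $\omega_{23}\omega_{12}$, $\omega_{23}\omega_{13}$ via the Alexander--Whitney front/back face convention together with the ``no pair swaps twice'' restriction, then compare termwise over $\Z_2$. The paper just organizes the tabulation by a short forcing argument on which permutations can occur, rather than listing all $36$ candidate simplices.
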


\begin{proof}
$d(Ar)$ is the sum of all cochains of the form $(B|E|x)^*, \, (x|B|E)^*$ and $(B|x|E)^*$ in complexity 2 .
It is not difficult to check that
\begin{equation} \label{dar}
 d(Ar)=   (B|E|D)^*+(B|E|F)^*+(A|B|E)^* +(C|B|E)^*  \, .
 \end{equation}
The cochain $\omega_{13} \omega_{12}$ pairs non trivially with chains $(x|y|z)$ with $(13)$ and $(12)$ substrings of $x$, $(31)$ and $(12)$ of $y$, and 
$(31)$ and $(21)$ of $z$. This forces $x=(123)$ or $x=(132)$. In the first case $y=(312)$ and $z=(321)$. In the second case $y=(312)$, and either $z=(321)$ or $z=(231)$, hence
\begin{equation} \label{eq1}
\omega_{13}\omega_{12} = (A|E|F)^* + (B|E|F)^*+(B|E|D)^* . 
\end{equation}
By similar considerations we obtain  
\begin{equation} \label{eq2}
\omega_{23}\omega_{12}= (A|B|F)^* +(A|E|F)^* ,
\end{equation}
\begin{equation} \label{eq3}
\omega_{23}\omega_{13} = (A|B|E)^* + (A|B|F)^*+(C|B|E)^* . 
\end{equation}
 Comparing the sum of equations \ref{eq1}, \ref{eq2}, \ref{eq3} with equation \ref{dar} we conclude.

\end{proof}

\section{The filtered model of the Barratt Eccles complex} \label{cinque}

In this section we start building the filtered model of the Barratt-Eccles algebra $\E_2^*(4)$ and verify its non-formality.
Let $(TW,D)$ be the filtered model of $\E_2^*(4)$ ( Theorem \ref{222}),
  obtained by deforming the bigraded model $(TW,d) \simeq H^*(F_4)$, and equipped with a quasi-isomorphism $\phi: (TW,D) \to \E_2^*(4)$.

By Theorem \ref{222}, $d=D$ on $W_0$ and $W_1$. We can characterize it explicitly.

\begin{Lem} \label{dw2}
On $W_0, \; D=d=0$. On $W_1, \; D=d$ is determined by 

$$d((B_{ij}B_{kl})^*)=\begin{cases}
(B_{ij}^*)^2  \quad{\rm    if }\; i=k \;{\rm and }\; j=l \\
B_{ij}^*B_{kl}^*+B_{kl}^*B_{ij}^*=[B_{ij}^*,B_{kl}^*] \quad{\rm for }\quad   j < l \\
B_{ij}^*B_{kl}^*+ B_{il}^*B_{ki}^*+B_{kl}^*B_{ki}^*  \quad{\rm for }\;   j=l \;{\rm and }\; k<i \\
B_{ij}^*B_{kl}^*+ B_{il}^*B_{ik}^*+B_{kl}^*B_{ik}^* \quad{\rm for }\; j=l \;{\rm and }\; k>i
\end{cases}
$$

\end{Lem}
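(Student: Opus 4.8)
The plan is to compute $d$ on $W_1$ directly from the description of the bigraded model in Example~\ref{boh}, since by Theorem~\ref{222} we know $D=d$ on $W_0$ and $W_1$. Recall $W = s\wt{YB_4^{(2)}}^*$, so $W_1$ is spanned by the duals of length-two monomials $B_{ij}B_{kl}$ in the Yang--Baxter algebra $YB_4 = YB_4^{(2)}$, and $d$ is the derivation dual to the multiplication $\mu$ of $YB_4^{(2)}$. Concretely, for a length-two monomial $m = B_{ij}B_{kl}$ we have $d(m^*) = \sum_{p,q} \langle m, B_p B_q\rangle\, B_p^* B_q^*$ where $B_p, B_q$ range over the standard generators $B_{ij}$ ($i<j$), together with the relation $B_{ij}=B_{ji}$ in characteristic two (the sign in relation~(1) of Proposition~\ref{ybb} disappears mod $2$). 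So the entire computation reduces to: for each unordered pair of ``slots'' $\{ij\},\{kl\}$, rewrite the product $B_{ij}B_{kl}$ in terms of the PBW-type basis $\{B_{i_1j_1}\cdots B_{i_lj_l} \mid j_1 \le \dots \le j_l,\ i_t < j_t\}$ from the Cohen--Gitler proposition, and read off which basis monomials of length two appear.

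First I would dispose of the easy cases. If $i=k$ and $j=l$, then $B_{ij}B_{kl} = (B_{ij})^2$ is already a basis element (weakly increasing $j$'s are allowed), so $d((B_{ij}B_{ij})^*) = (B_{ij}^*)^2$. If $\{i,j\}\cap\{k,l\}=\emptyset$, relation~(3) says $[B_{ij},B_{kl}]=0$, so $B_{ij}B_{kl}=B_{kl}B_{ij}$ and exactly one ordering is in normal form; but note these disjoint pairs only occur for $k\ge 4$, so for $\E_2^*(4)$ with four points they do appear — I should be careful to include $d((B_{ij}B_{kl})^*) = B_{ij}^*B_{kl}^* + B_{kl}^*B_{ij}^* = [B_{ij}^*,B_{kl}^*]$ in that case too; this is the ``$j<l$'' line, which the statement writes to cover both the disjoint case and any overlapping case where the product is already commutative after normalizing (e.g. $i=k$, $j\ne l$: then $B_{ij}B_{il}$ — hmm, this shares the index $i$ but not in a way that triggers Yang--Baxter unless a third index is involved). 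The genuinely interesting cases are those governed by the Yang--Baxter relation~(2): three distinct indices, two of the slots sharing exactly one index. The statement organizes these by ``$j=l$'' (the two slots share their larger index) and then splits on whether $k<i$ or $k>i$.

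For the Yang--Baxter cases, the key computation is this: suppose $j=l$ and the two slots are $\{i,j\}$ and $\{k,j\}$ with $i\ne k$, say $k<i<j$ after relabeling, so the third index is $k$ and the pair $\{i,k\}$ has larger index $i$. Relation~(2) gives $[B_{ij},B_{ki}] = [B_{ki},B_{kj}]$ (using the cyclic form $[B_{ij},B_{jk}]=[B_{jk},B_{ki}]=[B_{ki},B_{ij}]$ with the appropriate identification of the three indices and $B_{ij}=B_{ji}$ in char $2$), which lets me rewrite $B_{ij}B_{kj}$ — which is \emph{not} in normal form since the slots both end in $j$ but $i>k$ means they're out of $i$-order at the tie — by swapping and introducing a correction term involving $B_{ki}=B_{ik}$. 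Tracking the mod-$2$ arithmetic carefully yields $B_{ij}B_{kj} = B_{kj}B_{ij} + B_{ki}B_{?} + \dots$, producing the three-term answer $B_{ij}^*B_{kl}^* + B_{il}^*B_{ki}^* + B_{kl}^*B_{ki}^*$ stated in the ``$k<i$'' line (here $l=j$), and the mirror-image manipulation gives the ``$k>i$'' line. The main obstacle is purely bookkeeping: getting the index substitutions in the Yang--Baxter relation exactly right so that the three monomials on the right-hand side are the correct basis elements, and double-checking that no term is dropped or duplicated mod $2$ — there are no conceptual difficulties once the dictionary ``$d$ = dual of $\mu$, restricted to length $\le 2$, in the Koszul bigraded model'' is in place, but the case analysis on the relative order of the shared versus unshared indices is where a sign- or index-slip is most likely, so I would verify each line against a small explicit PBW straightening (for instance with $\{i,j,k,l\}\subset\{1,2,3,4\}$) before declaring the lemma proved.
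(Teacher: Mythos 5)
Your overall strategy is the paper's: the differential on $W_1$ is the dual of the multiplication of $YB_4$ restricted to pairs of generators, so one straightens every product $B_{pq}B_{rs}$ of two standard generators into the Cohen--Gitler basis and then transposes. But the execution of the key (Yang--Baxter) case rests on a false premise about that basis. The basis of Proposition following \ref{ybb} requires only $j_1\le j_2$ and $i_t<j_t$; it imposes \emph{no} condition on the first indices when the second indices tie. So for $k<i<j$ \emph{both} words $B_{ij}B_{kj}$ and $B_{kj}B_{ij}$ are basis elements (check the count for three indices: length-two part of $YB_3$ has dimension $9-2=7$, and the stated basis gives exactly $7$ words, whereas your tie-breaking convention would give only $6$). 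This is also visible in the statement you are proving: the lemma lists separate formulas for $j=l$, $k<i$ and $j=l$, $k>i$ precisely because both orderings are dual-basis generators of $W_1$. Your claim that $B_{ij}B_{kj}$ with $i>k$ ``is not in normal form'' and can be straightened by Yang--Baxter is therefore wrong, and the manipulation you describe does not even terminate: applying the relation to $B_{ij}B_{kj}$ produces a term $B_{ij}B_{ki}$ whose straightening returns $B_{ij}B_{kj}$, so you never reach a three-term normal form this way.

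The second, related problem is the direction of dualization. Your displayed formula $d(m^*)=\sum_{p,q}\langle m,B_pB_q\rangle B_p^*B_q^*$ is correct, but it says you must collect \emph{all products of two generators whose basis expansion contains the fixed basis word} $m$, not expand $m$ itself; since every left-hand side $(B_{ij}B_{kl})^*$ in the lemma is labeled by a basis word, expanding it gives only the word back. For $m=B_{ij}B_{kj}$ with $k<i$ the three terms arise because three different products contain $m$ after straightening: $B_{ij}\cdot B_{kj}$ (already basic), and $B_{ij}\cdot B_{ki}$, $B_{kj}\cdot B_{ki}$, each of which by the relation rewrites as $B_{ki}B_{aj}+B_{kj}B_{aj}+B_{aj}B_{kj}$ (with $a=i$, resp.\ $a=k$), hence contains $B_{ij}B_{kj}$, giving the terms $B_{il}^*B_{ki}^*$ and $B_{kl}^*B_{ki}^*$. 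This is exactly what the paper's proof does: it records that $B_{ab}B_{uv}$ is basic iff $b\le v$, writes the single straightening formula for $b>v$, and transposes. Note the same transposition is already implicitly needed in your ``easy'' cases: $d((B_{ij}^2)^*)=(B_{ij}^*)^2$ and the two-term answer for $j<l$ require checking that no \emph{other} product contributes (respectively, that $B_{kl}B_{ij}$ with $l>j$ does contribute), which ``$m$ is already a basis element'' alone does not establish. So the plan is salvageable, but you must (i) use the correct basis with no tie-breaking on the first indices, and (ii) run the computation on all products and dualize, rather than straightening the monomial that labels the dual generator.
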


\begin{proof}
The product of two generators $B_{ij}B_{uv}$ of the Yang-Baxter algebra is a standard basis generator if and only if $j \leq v$.
Instead, for $j >v$, the Yang Baxter relations yield
$$B_{ij} B_{uv}=\begin{cases}
 B_{uv} B_{ij}   \quad{\rm if }\; \{i , j \} \cap \{u,v\} = \emptyset \\
B_{uv}B_{ij} + B_{uj}B_{vj}+ B_{vj}B_{uj} \quad {\rm otherwise} 
\end{cases}
$$
By dualizing the formula on basis generators we obtain the result.

\end{proof}

We shall show that $d$ and $D$ do not agree on $W_2$, and there is no ``gauge equivalence'' transformation 
fixing the problem (compare Proposition \ref{341}).

Following the construction of the filtered model in \cite{EH,HS} (mentioned in Theorem \ref{222})
we can assume that $\phi$ sends each  generator in $W_0 \cong H^1(F_4)$  to an arbitrary cocyle representative.

\begin{Def} \label{zero}
Let $\phi: W_0 \to \E_2^1(4)$ 
be the linear map given by  $$\phi(B_{ij}^*)= \omega_{ij}\, .$$
\end{Def}

We move to the next set of generators in filtration 1.  
\begin{Def} \label{one}
Let $\phi:W_1 \to \E_2^1(4)$ be the linear map given by
$$\phi((B_{ij}B_{kl})^*)= \begin{cases} \label{quadratic} 
0 \text{ if } (i,j)=(k,l)  \\
\omega_{ij} \cup_1 \omega_{kl}  \text{ if  } j<l \\ 
\pi_{kil}^*(Ar) \text{ if } i>k \text{ and }j=l \\
\pi_{ikl}^*(Ar)+ \omega_{il}\cup_1 \omega_{kl} \text{ if } i<k \text{ and } j=l .
\end{cases} $$
\end{Def}
\
Let us explain the notation. The construction of $\cup_1$ goes back to Steenrod, and is presented by  
McClure-Smith in \cite{MS}. 
We remark that the $\cup_1$ product of 1-cochains  
over $\Z_2$ has a simple form.
Each $k$-cochain $c$ over $\Z_2$ is determined by its support $Supp(c)$, the set of non-degenerate $k$-chains sent to 1 by  $c$ (rather than 0).  Now for given 1-cochains $c,c'$ the relation
$Supp(c \cup_1 c')=Supp(c) \cap Supp(c')$ determines 
$c \cup_1 c'$.
\medskip

The homomorphism $\pi_{kil}:W \Sigma_4 \to W \Sigma_3$ is induced by the simplicial version of the forgetful projection from 4 to 3 configuration points that we describe below (the projection to 2 points has been described in section 4).

We need a function $\pi_{kil}:\Sigma_4 \to \Sigma_3$ measuring how a permutation twists the indices $\{k,i,l\}$.
Consider the three symbols $j_1=k,j_2=i,j_3=l$.
A permutation $\sigma \in \Sigma_4$ gives a sequence
$(\sigma(1), \sigma(2), \sigma(3), \sigma(4))$ containing a unique ordered subsequence $(j_{\tau(1)},j_{\tau(2)},j_{\tau(3)})$, where
$\tau \in \Sigma_3$. We set $\pi_{kil}(\sigma)=\tau$.
The construction induces a map on the W-construction still denoted $\pi_{kil}:W\Sigma_4 \to W\Sigma_3$, and then on normalized cochains
$$\pi_{kil}^*=N^*(\pi_{kil}):\E_t^*(3) \to \E_t^*(4) \, .$$
For example we have the pullback of 0-cochains $$\pi^*_{123}((312)^*)=(4312)^*+(3412)^*+(3142)^*+(3124)^* \, .$$

\begin{Lem}
The homomorphism $\phi$ of Definition \ref{zero} and  \ref{one} is compatible with the differentials,
and thus defines a homomorphism of DGA's  
$$\phi: (T(W_{\leq 1}),d) \to \E_2^*(4).$$
\end{Lem}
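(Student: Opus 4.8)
The plan is to verify the relation $d \circ \phi = \phi \circ d$ on a basis of $W_1$, namely the dual classes $(B_{ij}B_{kl})^*$, using the explicit formula for $d$ on $W_1$ from Lemma \ref{dw2} on the one hand, and the cochain-level computations in $\mathcal{E}_2^*(4)$ on the other. Since $W_0$ maps to $1$-cochains and all differentials in question land in $\mathcal{E}_2^2(4)$, this is a finite check, organized by the four cases in Definition \ref{one}.

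First I would dispose of the degenerate case $(i,j)=(k,l)$: here $\phi((B_{ij}^2)^*)=0$, so $d\phi=0$, and we must check $\phi(d((B_{ij}^2)^*))=\phi((B_{ij}^*)^2)=\omega_{ij}\cup\omega_{ij}$. But $\omega_{ij}=\pi_{ij}^*((12|21)^*)$ is the pullback of a $1$-cocycle on $W\Sigma_2$ whose square is zero on the cochain level (the sphere model $|\mathcal{F}_2(W\Sigma_2)|\simeq S^1$ has $\omega^2=0$ already, since $\mathcal{E}_2^2(2)=0$), so $\omega_{ij}^2=0$ and this case is immediate. For the case $j<l$ (the "disjoint or linked, bracket" case), $d((B_{ij}B_{kl})^*)=[B_{ij}^*,B_{kl}^*]$, so we need $d(\omega_{ij}\cup_1\omega_{kl})=\omega_{ij}\omega_{kl}+\omega_{kl}\omega_{ij}$, which is precisely the Hirsch/Steenrod coboundary formula for $\cup_1$ applied to two $1$-cocycles: $d(a\cup_1 b)=ab+ba+d(a)\cup_1 b+a\cup_1 d(b)=ab+ba$. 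I would cite the $\cup_1$ formula from McClure-Smith \cite{MS} and note that $\omega_{ij},\omega_{kl}$ are cocycles.

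The substantive part is the two Arnold cases, $j=l$ with $i>k$ or $i<k$. Here Lemma \ref{dw2} gives $d((B_{ij}B_{kl})^*)=B_{ij}^*B_{kl}^*+B_{il}^*B_{ki}^*+B_{kl}^*B_{ki}^*$ (for $i>k$) and similarly with $B_{ik}^*$ in place of $B_{ki}^*$ (for $i<k$), so we must show respectively
\begin{align*}
d(\pi_{kil}^*(Ar)) &= \omega_{ij}\omega_{kl}+\omega_{il}\omega_{ki}+\omega_{kl}\omega_{ki}, \\
d\bigl(\pi_{ikl}^*(Ar)+\omega_{il}\cup_1\omega_{kl}\bigr) &= \omega_{ij}\omega_{kl}+\omega_{il}\omega_{ik}+\omega_{kl}\omega_{ik}.
\end{align*}
The key observation is that $\pi_{kil}^*$ is a DGA map, so $d(\pi_{kil}^*(Ar))=\pi_{kil}^*(d(Ar))$, and by the Lemma resolving the Arnold relation for $3$ points, $d(Ar)=\omega_{13}\omega_{12}+\omega_{23}\omega_{12}+\omega_{23}\omega_{13}$ in $\mathcal{E}_2^2(3)$; pulling back along $\pi_{kil}$ and using the compatibility $\pi_{kil}^*(\omega_{ab})=\omega_{\,j_a j_b}$ (with $j_1=k,j_2=i,j_3=l$) turns this into $\omega_{ki}\omega_{kl}+\omega_{il}\omega_{kl}+\omega_{il}\omega_{ki}$, and one checks this matches the target after using $\omega_{ki}=\omega_{ik}$ and reconciling the index conventions $j=l$. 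In the second case the extra $\cup_1$ term contributes $\omega_{il}\omega_{kl}+\omega_{kl}\omega_{il}$ via the Hirsch formula, which cancels the discrepancy between the two orderings. The main obstacle is purely bookkeeping: keeping the index substitutions $\{k,i,l\}\mapsto\{1,2,3\}$ straight and checking that the six terms combine correctly over $\mathbb{Z}_2$; there is no conceptual difficulty once the naturality $d\,\pi_{kil}^*=\pi_{kil}^*\,d$ and the projection-compatibility $\pi_{kil}^*\omega_{ab}=\omega_{j_a j_b}$ are in hand, both of which follow from Proposition \ref{kashi} and the functoriality of $N^*$.
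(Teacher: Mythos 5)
Your argument follows essentially the same route as the paper's proof: the square-zero case via pulling back from $\mathcal{E}_2^*(2)$, which vanishes in degree $2$; the case $j<l$ via Steenrod's coboundary formula for $\cup_1$ applied to cocycles; and the two Arnold cases via the naturality $d\,\pi_{kil}^*=\pi_{kil}^*\,d$, the lemma resolving the Arnold relation, and the compatibility $\pi_{kil}^*(\omega_{ab})=\omega_{j_a j_b}$, with the extra $\cup_1$ term in the last case accounting for the commutator $[\omega_{il},\omega_{kl}]$. Two bookkeeping caveats: the identity $\omega_{ki}=\omega_{ik}$ you invoke is false at the cochain level (the two cocycles have different supports and are only cohomologous), but it is also unnecessary, since with $k<i<l$ every generator occurring is standard and the pullback only produces $\omega_{ab}$ with $a<b$; and one factor order in your displayed pullback is reversed --- the correct computation gives $\pi_{kil}^*(d(Ar))=\omega_{kl}\omega_{ki}+\omega_{il}\omega_{ki}+\omega_{il}\omega_{kl}$, which agrees on the nose with $\phi\bigl(B_{ij}^*B_{kl}^*+B_{il}^*B_{ki}^*+B_{kl}^*B_{ki}^*\bigr)$ for $j=l$, so no commuting of products (which is not permitted in this noncommutative setting) is needed.
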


\begin{proof}

We have that $d((B_{ij}^2)^*)=(B_{ij}^*)^2$ by lemma \ref{dw2}

In the first case of Definition \ref{quadratic} $$\phi ( d((B_{ij}^2)^*))=
 \phi((B_{ij}^*)^2)=\omega_{ij}^2=0=d(\phi ((B_{ij}^2)^*)).$$
 Namely  the top-dimensional 1-cocyle $(12|21)^* \in \E_2^1(2)$ squares to zero by dimensional reasons, and then so does its pullback $\omega_{ij} \in \E_2^1(3)$.
\
In the second case of Definition \ref{quadratic}
by Lemma \ref{dw2}
$$d((B_{ij}B_{kl})^*)=[B_{ij}^*,B_{kl}^*]  \text{ if } j<l , \text{ and}$$
$$\phi(d((B_{ij}B_{kl})^*))=
\phi([B_{ij}^*,B_{kl}^*])= \omega_{ij} \cup \omega_{kl}
+\omega_{kl} \cup \omega_{ij} =d(\omega_{ij}\cup_1 \omega_{kl})$$
by Steenrod's formula \cite{MS}. 
By definition $$d(\omega_{ij}\cup_1 \omega_{kl})=
d(\phi((B_{ij}B_{kl})^*)).$$

\
In the third case of Definition \ref{quadratic}, for  $j=l$ and $i>k$, we have from Lemma \ref{dw2} that
$$d((B_{ij}B_{kl})^*)= B_{ij}^*B_{kl}^*+ B_{ij}^*B_{ki}^*+B_{kj}^*B_{ki}^*$$
and $\phi$ maps the right hand side  to
$$\omega_{il}\omega_{kl}+\omega_{il}\omega_{ki}+\omega_{kl}\omega_{ki}=
\pi_{kil}^*(d(Ar))=d(\pi_{kil}^*(Ar))=d\phi((B_{ij}B_{kl})^*).$$

In the last case of Definition \ref{quadratic} 
for $i<k$ we have the same formula 
$$d((B_{ij}B_{kl})^*)= B_{ij}^*B_{kl}^*+ B_{ij}^*B_{ik}^*+B_{kj}^*B_{ik}^*$$
(we only swap $i$ and $k$ to have the term $B_{ik}$ in standard form),
 so
\begin{align*}
\phi(d((B_{ij}B_{kl})^*)) =
\omega_{il}\omega_{kl}+\omega_{il}\omega_{ik}+\omega_{kl}\omega_{ik}
=\pi_{ikl}^*(d(Ar))+[\omega_{il},\omega_{kl}] =	\\
=d(\pi_{ikl}^*(Ar)+\omega_{il}\cup_1 \omega_{kl}) =
d(\phi ((B_{ij}B_{kl})^*)).
\end{align*}

Of course the product of cochains is not commutative and this matters 
in the ordering of the indices in the Arnold relation.
\end{proof}

We compute the ``error term'' $D(w)-d(w) \in T(W_0)$, for $w \in W_2$, following the proof of Theorem \ref{222}. 

Since $d(w) \in T(W_{\leq 1})$, we have already defined $\phi(d(w))$, that is a cocyle, since $\phi$ commutes with $d$ so far.
Its cohomology class $[\phi(d(w))] \in H^2(\E_2^*(4)) \cong H^2(F_4)$  might not be trivial, preventing us from extending $\phi$ to $T(W_{\leq 2},d)$.

\begin{Def} \label{iota}
Let $\iota:H^2(F_4) \to T_2(W_0)$ be the embedding into the quadratic part sending each standard generator in the $A_{ij}$'s to the analogous standard generator in the
$B_{ij}$'s. We set then $D(w)=d(w)-\iota[\phi(d(w))]$. 
\end{Def}

\begin{Prop} \label{prop5.6}
There is an extension of $\phi: (TW_{\leq 2},D) \to \E_2^*(4)$ commuting with the differentials.
\end{Prop}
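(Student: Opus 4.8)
The plan is to extend $\phi$ from $(T(W_{\leq 1}),d)$ to $(T(W_{\leq 2}),D)$ one generator at a time, exactly along the lines of the proof of Theorem \ref{222}. Fix $w \in W_2$. We have already defined $\phi(d(w))$ since $d(w) \in T(W_{\leq 1})$, and because $\phi$ commutes with $d$ on $T(W_{\leq 1})$ the element $\phi(d(w))$ is a $2$-cocycle in $\E_2^*(4)$. By construction of $D$ in Definition \ref{iota}, $D(w) = d(w) - \iota[\phi(d(w))]$, and applying $\phi$ to $D(w)$ gives the cocycle $\phi(d(w)) - \phi(\iota[\phi(d(w))])$. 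Now $\phi(\iota(\,\cdot\,))$ restricted to quadratic monomials in the $B_{ij}^*$'s sends each standard quadratic generator to the corresponding cup product $\omega_{ij}\cup\omega_{kl}$ (via Definitions \ref{zero} and \ref{one}, since $\iota$ lands in $T_2(W_0)$ and $\phi$ on $W_0$ is multiplicative), so $\phi\circ\iota$ followed by passing to cohomology is precisely the splitting $\rho$ of the projection onto $H^2$. Hence $[\phi(d(w)) - \phi\iota[\phi(d(w))]] = [\phi(d(w))] - [\phi(d(w))] = 0$ in $H^2(\E_2^*(4))$. So $\phi(D(w))$ is a coboundary, and we may choose an explicit $1$-cochain primitive $\phi(w) \in \E_2^1(4)$ with $d(\phi(w)) = \phi(D(w))$. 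Doing this for each basis element of $W_2$ and extending multiplicatively defines $\phi$ on $T(W_{\leq 2})$.

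The remaining point is to check that the extension so defined is a chain map on all of $T(W_{\leq 2})$. On generators of $W_{\leq 1}$ this is already known; on generators $w\in W_2$ it holds by the very choice of $\phi(w)$ as a primitive of $\phi(D(w))$; and on products it follows because $\phi$ is defined as an algebra homomorphism on the free algebra $T(W_{\leq 2})$ and $D$ is a derivation, so $\phi\circ D = d\circ\phi$ holds on decomposables once it holds on generators. One should also note that $D$ as defined indeed satisfies $D^2 = 0$ on $W_{\leq 2}$: since $D - d$ lowers filtration by $2$ and $d^2 = 0$, the obstruction $D^2(w) \in T(W_0)$ for $w\in W_2$ is a cocycle for $d$ of the appropriate degree, and the standard Halperin--Stasheff argument (the bigraded model has vanishing homology in positive resolution level, Proposition \ref{res}) shows it vanishes after possibly adjusting $\iota$; here $\iota$ has been pinned down to be the canonical quadratic embedding, which is compatible because the bigraded differential $d$ on $W_2$ already surjects onto the relevant cocycles (Lemma \ref{dw2} describes $d$ on $W_1$, and dually $d|_{W_2}$ hits exactly the decomposable cocycles detecting $H^2$).

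The main obstacle — and the only genuinely non-formal ingredient — is producing the explicit primitives $\phi(w)$ for $w\in W_2$ in the Barratt--Eccles complex $\E_2^1(4)$: one must verify that the concrete $2$-cocycle $\phi(D(w))$, written out as a sum of generators $(x|y|z)^*$ using the formulas of Lemma \ref{dw2} together with the combinatorial descriptions of $\omega_{ij}$, $\cup_1$, and the pullbacks $\pi_{kil}^*(Ar)$, is actually a coboundary and to exhibit a cochain hitting it. The cohomological triviality is guaranteed abstractly by the Koszulness argument above, so the real work is bookkeeping: it reduces to a finite check in a finite-dimensional complex (the relevant pieces of $\E_2^*(4)$ have the dimensions tabulated via $P_2^4(x)$), and it is precisely this choice of primitive whose failure to be ``corrected away'' by a filtration-lowering automorphism will later be detected as the nontrivial obstruction class in Hochschild cohomology. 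For the present proposition, however, it suffices that \emph{some} primitive exists, which the argument above supplies.
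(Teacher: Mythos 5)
Your argument is correct and is essentially the paper's own proof: since $\phi$ on $T(W_0)$ sends each standard quadratic monomial in the $B_{ij}^*$'s to a cocycle representing the corresponding class, $[\phi(\iota(x))]=x$, so $[\phi(D(w))]=[\phi(d(w))]-[\phi(d(w))]=0$ and a primitive $\phi(w)$ exists, after which one extends multiplicatively. The digression about $D^2=0$ and "adjusting $\iota$" is unnecessary: $D(d(w))=d(d(w))=0$ because $D=d$ on $T(W_{\leq 1})$, and $D$ vanishes on $\iota[\phi(d(w))]\in T_2(W_0)$ since $D|_{W_0}=0$, so no adjustment is ever needed.
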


\begin{proof}
We need to define $\phi(w)$ for $w \in W_2$, as $w$ varies in the standard basis.
In cohomology  $[\phi(D(w))]=[\phi(d(w))]-[\phi(d(w)]=0$, since $[\phi(\iota(x))]=x$. So there exists a cochain $\phi(w)  \in \E_2^1(4)$ with 
$d\phi(w)=\phi(D(w))$.
 \end{proof}

The inductive construction continues in an infinite number of stages, building the full model $(TW,D)$, but here we are concerned only with filtration $\leq 2$.

Explicitly the ``error term'' $\phi(d(w))$ 
is the degree  one map 
\begin{equation} \label{formulona} \phi \circ d: W_2 \stackrel{\mu^*}{\longrightarrow} W_1 \otimes W_0  \oplus W_0 \otimes W_1  
\xrightarrow{  \phi_1 \ot \phi_0 + \phi_0 \ot \phi_1} \E^1 \ot \E^1    \str{\cup}{\lrh}  \E^2 \,  \end{equation}  
evaluated on $w$.

\begin{Def} \label{alpha}
For $w \in W_2$,  let us define the cohomology class $\alpha(w) := [\phi(dw)].$
This defines a homomorphism $\alpha:W_2 \to H^2(F_4)$. 
 \end{Def}

We need a partial computation of $\alpha$ .
The computation is achieved by evaluating the cocycles $\phi(d(w))$ on standard chain representatives for basis elements of the homology group
$H_2(F_4) \cong H_2(\mathcal{E}_2(4)^*)$ that has dimension 11. 
Each representing cycle will be the sum of 8 simplexes, that are generators of $\mathcal{E}_2^2(4)^*$. 
We need to recall some facts on operads with multiplication in order to describe the cycles.

\begin{Def} \cite{Deligne}
A differential graded operad (DGO) with multiplication over a fixed field $F$ is a sequence of $F$-vector spaces 
$O(k), \, k \in \N$,  endowed with composition operations $$O(k) \otimes O(l) \to O(k+l-1)$$ for $1 \leq i \leq k$, denoted 
$(x,y) \mapsto x \circ_i y$, and multiplication operations $$O(k) \otimes O(l) \to O(k+l)$$ denoted $(x,y) \mapsto x \cdot y$, satisfying appropriate axioms.
\end{Def}

\begin{Exa}
The Barratt Eccles operad $(\E_n)^*$ forms a DGO with multiplication (over $\Z_2$ in this paper) \cite{BF}.
Its homology 
\begin{equation}\label{theta}  H_*(\E_n(k)^*) \stackrel{\theta}{\cong} H_*(F_k(\R^n)) \end{equation}
 inherits a structure of graded operad with multiplication . 
If we forget the multiplication, then $(\E_n)^*$ is weakly equivalent to the chain operad of the little $n$-discs operad as a DGO \cite{BF}.
\end{Exa}

We use lemma 6 in \cite{palermo} that gives explicit representatives of the homology basis 
of the euclidean configuration spaces.

\begin{Lem} \cite{palermo}
Each generator of the basis of $H_{i(n-1)}(F_k(\R^n))$ dual to the cohomology basis in Corollary \ref{basis} is represented by an embedding
$(S^{n-1})^i  \to F_k(\R^n)$ with image the configuration space of an appropriate composite planetary system of $k$ bodies in $\R^n$.
 \end{Lem}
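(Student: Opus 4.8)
This is Lemma~6 of \cite{palermo}; here is the strategy I would follow. The plan is to attach to each monomial $m=A_{i_1j_1}\cdots A_{i_lj_l}$ of the cohomology basis of Corollary~\ref{basis} an explicit ``composite planetary system'' in $\R^n$, to recognize its space of configurations as a product of spheres embedded in $F_k(\R^n)$, and to show by a degree computation that its fundamental class is the homology generator dual to $m$.

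First I would encode a basis monomial $m$ (with $j_1<\cdots<j_l$ and $i_t<j_t$) as a rooted forest $T_m$ on $\{1,\dots,k\}$ whose edges are the pairs $\{i_t,j_t\}$, with $i_t$ the parent of $j_t$; since the $j_t$ are distinct and $i_t<j_t$, this is a well defined ``increasing'' forest, and such forests correspond bijectively to the basis monomials, the number $l$ of edges recording the homological degree $l(n-1)$. I would then place the roots of $T_m$ at fixed, well separated points of $\R^n$ and let each non-root vertex $v$ revolve around its parent on a round $(n-1)$-sphere of radius $r_v$, the radii being chosen so that (i) they decrease rapidly as one descends a branch of $T_m$ and (ii) at each vertex the radius attached to a child decreases with that child's label. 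For small enough radii this family of configurations is an embedding $f_m\colon (S^{n-1})^l\to F_k(\R^n)$, one sphere factor per edge of $T_m$, whose image is exactly the configuration space of the planetary system. The remaining task is to compute $\langle m',[f_m]\rangle$ for every basis monomial $m'$, where $[f_m]:=(f_m)_*[(S^{n-1})^l]$, and to check it equals $\delta_{m,m'}$.

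For this, since $m'=A_{i'_1j'_1}\cdots A_{i'_{l'}j'_{l'}}$ is the pullback of the exterior fundamental class of $(S^{n-1})^{l'}$ along $\pi_{i'_1j'_1}\times\cdots\times\pi_{i'_{l'}j'_{l'}}$, the pairing vanishes unless $l'=l$ and otherwise equals $\pm\det(c_{s,t})$, where $c_{s,t}$ is the coefficient of the $t$-th sphere generator $x_t$ in $(\pi_{i'_sj'_s}\circ f_m)^*(\iota)$ (using $x_t^2=0$). The computational heart is the claim that for $a\ne b$ the direction map $\pi_{a,b}\circ f_m$ is null-homotopic if $a$ and $b$ lie in different trees of $T_m$, and is otherwise homotopic to $\pm$ the projection onto one sphere factor: expanding $\vec x_b-\vec x_a$ as a sum $\sum\pm r_e\theta_e$ over the edges $e$ on the $T_m$-path from $a$ to $b$, the term of largest radius dominates and, by choices (i)--(ii), corresponds to a well defined edge $e_{T_m}(a,b)$, equal to $\{a,b\}$ exactly when $\{a,b\}$ is an edge of $T_m$. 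Granting this, each row of $(c_{s,t})$ is zero or $\pm$ a coordinate vector. When $m=m'$ the matrix is, after fixing the sphere orientations, the identity, so $\langle m,[f_m]\rangle=1$; when $m\ne m'$ one would argue that $s\mapsto e_{T_m}(i'_s,j'_s)$ cannot be a bijection onto the edges of $T_m$, because both endpoints of $e_{T_m}(a,b)$ are $\le$ the corresponding endpoints of $\{a,b\}$ and the larger one drops strictly unless $\{a,b\}$ is already an edge of $T_m$, forcing a repeated column (or a zero row), whence $\langle m',[f_m]\rangle=0$.

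The main obstacle is the combinatorial bookkeeping of the last step: showing that the degree matrix is the identity for $m=m'$ and singular for $m\ne m'$. This is exactly the place where the radii of the planetary orbits must be arranged with care, decreasing both down each branch and with the child's label at each branching vertex, so that in a ``mismatched'' direction map the dominant displacement always points along an earlier sphere coordinate. The remaining ingredients — that $f_m$ is an embedding onto the planetary configuration space, and the reduction of the pairing to a determinant — are routine.
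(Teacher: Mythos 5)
The paper itself offers no proof of this statement: it is imported verbatim from Lemma~6 of \cite{palermo}, so there is no internal argument to compare yours with; your reconstruction (increasing forests, planetary tori with rapidly decreasing orbital radii, evaluation of basis cochains via the homotopy class of the direction maps $\pi_{a,b}\circ f_m$) is the standard one and is essentially correct. Two points should be tightened. First, minor bookkeeping: for $n$ odd the classes $x_t$ are even, so the top coefficient is a permanent rather than a determinant (harmless here, since each row is $0$ or $\pm$ a coordinate vector); and conditions (i)--(ii) by themselves do not compare radii sitting on different branches at different depths, so to have a well-defined dominant edge you should make the radii globally super-decreasing, e.g.\ $r_w=\epsilon^{w}$, which refines both (i) and (ii). Second, and more substantively, the last step as stated is not quite a proof: from ``both endpoints of $e_{T_m}(a,b)$ are $\le$ those of $\{a,b\}$, strictly in the larger one unless $\{a,b\}$ is an edge of $T_m$'' one only gets, assuming the assignment were a bijection, an inequality between the child-label sums of $T_{m'}$ and $T_m$, which is not yet a contradiction. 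The missing observation is easy: if some child label $j'_s$ of $T_{m'}$ is not a child label of $T_m$, then $j'_s$ is a root (or isolated vertex) of $T_m$, and since parents carry smaller labels every other vertex of its tree exceeds $j'_s>i'_s$, so $i'_s$ lies in a different tree and that row is zero. Hence a nonzero pairing forces the child-label sets to coincide; then $j'_s\mapsto \mathrm{child}\bigl(e_{T_m}(i'_s,j'_s)\bigr)$ is a self-map of that set which is a bijection and satisfies $\phi(b)\le b$, hence is the identity, and your equality criterion then says every edge of $T_{m'}$ is an edge of $T_m$, i.e.\ $m'=m$. With this patch (and the orientation conventions fixing the sign in the case $m'=m$) the argument is complete.
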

 
The operadic composition and the multiplication in the operad $H_*(F_*(\R^n))$ have an easy interpretation in terms of planetary systems.
Informally, if $x \in H_t(F_k(\R^n))$ and $y \in H_u(F_l(\R^n))$ are represented by planetary systems, then
the system of $x \circ_i y \in H_{t+u}(F_{k+l-1}(\R^n))$ is obtained replacing the $i$-th planet of $x$ by a small copy of the system of $y$, and relabelling the bodies involved. This procedure is similar to the operadic composition in the little discs operad. 
The system of the multiplication $x \cdot y \in H_{t+u}(F_{k+l}(\R^n))$ is the union of two copies of the system $x$ and $y$, far apart, with the labels of the bodies of $y$ raised by $k$. 

 Let us start from the homology generator  $a \in H_1(F_2)\cong H_1(S^1) \cong \Z_2$. 
This is represented by a configuration space of a planet labelled 2 rotating around a star labelled 1. 
 The generator $(A_{12}A_{23})^*$ of $H_2(F_3)$  is 
the operadic composition $a \circ_2 a$.  Its system is
the configuration space of a planetary system with a star labelled 1, a planet labelled 2, and a satellite labelled 3. 

\

The cycle $\gamma=(1 2 |  2 1)+(2 1 | 1 2) \in \E_2^1(2)^*$ represents the generating class $[\gamma] \in H_1(\E_2(2)^*)$ corresponding to $a$ under the isomorphism $\theta$ in equation \ref{theta}.
Let us compute the class $[\gamma \circ_2 \gamma]= [\gamma] \circ_2 [\gamma] \in H_2(\E_2(3))$ corresponding to $a \circ_2 a$ under $\theta$.
In the Barratt-Eccles operad the composition is defined by appropriate substitution and relabelling  \cite{BF}. For example 
$$(1 2 | 2 1)\circ_2 (2 1 | 1 2) = (1  3 2 | 1 2 3 | 2 3 1)+ 
(1 3 2 | 3 2 1| 2 3 1).$$
 This implies that the cycle $\gamma \circ_2 \gamma$ is a sum of 8 simplexes, namely

\begin{align*} 
\gamma \circ_2 \gamma =(132|123|231)+(123|132|321)+(123|231|321)+(132|321|312)+ \\
+(321|132|123)+(231|123|132)+(321|231|123)+(231|321|132).
\end{align*}
 
  The class  $(A_{12}A_{23})^* \in   H_2(F_4)$ is the multiplication $(a\circ_2 a) \cdot u $  by the base point generator $u \in H_0(F_1)$.
Its system has a star labelled 1, a planet labelled 2, a satellite labelled 3, and another star labelled 4 far away.

\

 \begin{tikzpicture} [scale=.4]
  \tikzstyle{star}=[draw,shape=circle,fill=white];
 \draw (0,0)circle (4); 
 \draw (4,0) circle (2.12);
 \node[star] (1) at (0,0) {1};
  \node[star] (2) at (4,0) {2};
  \node[star] (3) at (5.5,1.5) {3};
   \node[star](4) at (13,0) {4};

  \draw (1)--(2)
            (2)--(3);
  \end{tikzpicture}           
 
  \
  
In the Barratt Eccles operad the product $x  \cdot 1 \in \mathcal{E}(4)^*$ of $x \in \mathcal{E}(3)^*$ and of the $0$-chain $1 \in \mathcal{E}^0(1)^*$
is obtained by sticking a 4 at the end of each permutation of $x$. The effect of this operation on $\gamma \circ_2  \gamma$ gives the first representative 
\begin{align*} 
 T=(\gamma \circ_2 \gamma) \cdot 1 =(1324|1234|2314)+(1234|1324|3214)+(1234|2314|3214)+\\
 (1324|3214|3124)+
(3214|1324|1234)+(2314|1234|1324)+\\(3214|2314|1234)+(2314|3214|1324). 
 \end{align*}

The next homology generator $ (A_{12}A_{13})^*$ is dual to a system where a star is labelled 2, a planet 1, and a satellite 3. 
This homology class is the effect of the $(12)$-action on the previous class (exchanging 2 and 1). This makes 
sense in any operad, and on the chain level in the Barratt Eccles operad as well, yielding the second representative $(12)T$.

Proceeding in this way we obtain 8 basis elements out of 11, just by permuting labels. Namely each triple of indices out of 4 yields two generators.
We list the representatives on the left and the homology classes on the right.
\begin{eqnarray*} 
T & (A_{12}A_{23})^* \\
(12)T & (A_{12}A_{13})^* \\
(1234)T & (A_{23}A_{34})^* \\
(134)T &  (A_{23}A_{24})^* \\ 
(23)T & (A_{13}A_{34})^* \\  
(13)(24)T & (A_{13}A_{14})^* \\ 
(34)T & (A_{12}A_{24})^* \\
(34)(12)T  & (A_{12}A_{14})^*
\end{eqnarray*}

The three remaining classes are each represented by two independent ``planetary systems'':  the class $(A_{12}A_{34})^* = a \cdot a$
 is represented by a map from the torus describing a configuration space where the star 1 has a planet 2, the star 3 has a planet 4, and the two systems are far apart. 
 
 \
 
 \begin{tikzpicture} [scale=.4]
  \tikzstyle{star}=[draw,shape=circle,fill=white];
 \draw (0,0)circle (3); 
 \draw (13,0) circle (3);
 \node[star] (1) at (0,0) {1};
  \node[star] (2) at (3,0) {2};
  \node[star] (3) at (13,0) {3};
   \node[star](4) at (16,0) {4};

  \draw (1)--(2)
            (3)--(4);
  \end{tikzpicture}           

\
 
 On the chain level it is represented by $\gamma \cdot \gamma$.
The multiplication in the Barratt Eccles operad 
follows from the operadic structure, as $x \cdot y = (m \circ_2 y) \circ_1 x $, where $m=(1 2)$.   
For example  $$(1 2) \cdot (2 1)= (1 2 3 4 | 2 1 3 4 | 2 1 4 3)+
(1 2 3 4 | 1 2 4 3 | 2 1 4 3),$$ and so in total $\gamma \cdot \gamma$ has 8 summands, namely

\begin{align*} \label{2block}
\gamma \cdot \gamma = (1234|2134|2143)+(1234|1243|2143)+(2134|1234|1243)+(2134|2143|1243)+\\
(1243|2143|2134)+(1243|1234|2134)+(2143|1243|1234)+(2143|2134|1234). 
\end{align*}

This and the two remaining representatives (on the left) of the homology basis generators (on the right) are 
\begin{eqnarray*} 
\gamma \cdot \gamma & (A_{12}A_{34})^* \\
(23) (\gamma \cdot \gamma) & (A_{13}A_{24})^* \\
(24) (\gamma \cdot \gamma) &  (A_{14}A_{23})^*
\end{eqnarray*}

 \begin{Rem} \label{block}
 The representing cycles have a very special form. Namely they involve 2-simplexes where either 
 \begin{itemize}
 \item  two blocks of two labels move alternatively  (as for $\gamma \cdot \gamma$), or  
 \item  in a block of three labels, alternatively one jumps over the other two, or the other two swap (as for $T$). The fourth label is fixed on the right.

 \end{itemize}
 
 Only some labels are allowed. For example the subsequence $(3124|1234)$ does not appear because $3$ is never the star of a planetary system.
This will facilitate evaluation of cocycles on these cycles, as most cochain generators will pair trivially with them.  
  \end{Rem}

We shall compute the value of $\alpha:W_2 \to H^2(F_4)$ on 6 generators of $W_2$ out of 90, namely 
$$(B_{12}B_{23}B_{13})^* , (B_{12}B_{24}B_{14})^* , (B_{12}B_{34}B_{24})^* , (B_{23}B_{13}B_{24})^*, (B_{23}B_{24}B_{14})^* , (B_{23}B_{34}B_{24})^*.$$
We need first to compute the coproduct on these generators, by dualizing the multiplication in the standard basis.

\begin{Lem} \label{coproduct}
\begin{align*}
\mu^*(B_{12}B_{23}B_{13}\,^*)&=(B_{12}B_{13}^* + B_{12}B_{23}^* +B_{23}B_{13}^*) \ot B_{12}^* +   \\
        &+ B_{12}B_{23}^* \ot B_{13}^* +  B_{23}^* \ot B_{12} B_{13}^* + B_{12}^* \ot B_{23}B_{13}^* \\
\mu^*(B_{12}B_{24}B_{14}\,^* )&=( B_{12}B_{14}^*+ B_{12}B_{24}^*+B_{24}B_{14}^*) \ot B_{12}^* + \\
&+ B_{12}B_{24}^* \ot B_{14}^* + B_{24}^* \ot B_{12} B_{14}^* + B_{12}^* \ot B_{24}B_{14}^*  \\
\mu^*(B_{12}B_{34}B_{24}\,^*)&= B_{34}B_{24}^* \ot B_{12}^* + (B_{12}B_{24}^* + B_{12}B_{34}^*)\ot B_{23}^* + B_{12}B_{34}^* \ot B_{24}^* +  \\
&+B_{24}^* \ot B_{12}B_{23}^* + B_{34}^* \ot (B_{12}B_{23}^* + B_{12}B_{24}^*) + B_{12}^* \ot B_{34}B_{24}^* \\
\mu^*(B_{23}B_{13}B_{24}\,^*)&= (B_{13}B_{24}^*+ B_{23}B_{24}^*)  \ot B_{12}^* + B_{23}B_{24}^* \ot B_{13}^* + B_{23}B_{13}^* \ot B_{24}^* + \\
&+B_{13}^* \ot B_{12}B_{24}^* + B_{23}^* \ot (B_{12}B_{24}^* + B_{13}B_{24}^*) + B_{24}^* \ot B_{23}B_{13}^*  \\
\mu^*(B_{23}B_{24}B_{14}\,^*)&= (B_{23}B_{14}^*+B_{23}B_{24}^*)\ot B_{12}^* + B_{24}B_{14}^* \ot B_{23}^* + B_{23}B_{24}^* \ot B_{14}^* +  \\
&+(B_{14}^*+ B_{24}^*) \ot B_{12}B_{23}^* + B_{24}^* \ot B_{23}B_{14}^* + B_{23}^* \ot B_{24}B_{14}^*  \\
 \mu^*(B_{23}B_{34}B_{24}\,^*) &= (B_{23}B_{24}^* + B_{23}B_{34}^*+ B_{34}B_{24}^*) \ot B_{23}^* + B_{23}B_{34}^* \ot B_{24}^* + \\
&+ B_{34}^* \ot B_{23}B_{24}^* + B_{23}^* \ot B_{34}B_{24}^* 
\end{align*}
\end{Lem}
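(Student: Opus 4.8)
The statement to prove is Lemma~\ref{coproduct}, which computes the coproduct $\mu^*$ on six specified generators of $W_2$, where $\mu$ denotes the multiplication of the Yang-Baxter algebra $YB_4^{(2)}$ restricted to the relevant tridegrees.

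The plan is to proceed entirely by dualizing the multiplication in the standard monomial basis, exactly as in the proof of Lemma~\ref{dw2} but one resolution level higher. First I would fix a generator, say $(B_{12}B_{23}B_{13})^*$, lying in $W_2$, i.e.\ dual to a length-three standard basis monomial. The component of $\mu^*$ landing in $W_1\otimes W_0 \oplus W_0 \otimes W_1$ is determined by: for each way of writing a standard basis element as a product $(\text{length-2 monomial})\cdot(\text{length-1 monomial})$ or $(\text{length-1})\cdot(\text{length-2})$, one asks whether, after applying the Yang-Baxter relations of Proposition~\ref{ybb} to put the product back in standard form, the monomial $B_{12}B_{23}B_{13}$ appears with nonzero (hence, over $\Z_2$, equal to $1$) coefficient. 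Concretely, I would enumerate all pairs $(m_2, m_1)$ with $m_2$ a length-2 standard monomial in the $B_{ij}$ with indices in $\{1,2,3\}$ and $m_1$ a single generator $B_{ij}$, compute $m_2\cdot m_1$ in standard form using the rewriting rules already spelled out in the proof of Lemma~\ref{dw2} (namely $B_{ij}B_{uv}=B_{uv}B_{ij}$ when the index sets are disjoint, and $B_{ij}B_{uv}=B_{uv}B_{ij}+B_{uj}B_{vj}+B_{vj}B_{uj}$ otherwise, plus $B_{ij}^2$ handled separately), and record the coefficient of $B_{12}B_{23}B_{13}$; then do the symmetric thing for $m_1\cdot m_2$. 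Summing the contributing tensors $m_1^*\otimes m_2^*$ (resp.\ $m_2^*\otimes m_1^*$) yields the displayed formula. I would then repeat this bookkeeping for the other five generators.

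A useful simplification I would exploit: the coproduct on the bar construction is the deconcatenation coproduct twisted by the rewriting, and only the $(2,1)$ and $(1,2)$ cuts matter for the target $W_1\otimes W_0\oplus W_0\otimes W_1$, so the computation is genuinely finite and short. For each of the six generators only a handful of standard monomials in the appropriate arity and length can possibly rewrite to contain the target word, because the index sets are constrained (e.g.\ for $(B_{12}B_{23}B_{13})^*$ every contributing monomial must involve only indices from $\{1,2,3\}$, and for $(B_{12}B_{34}B_{24})^*$ the relevant index structure forces the interaction to happen among $\{2,3,4\}$ with $B_{12}$ or $B_{34}$ commuting past). I would organize the check so that the terms group naturally as in the statement: the first tensor factor collects the "left cuts" and the second factor the "right cuts", with the three-term bunches being precisely the image of a Yang-Baxter relation under the cut.

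I do not expect a serious obstacle here; this is a routine but error-prone dualization. The one place to be careful is the non-commutativity: when rewriting $B_{ij}B_{uv}$ with overlapping index sets one must keep the Yang-Baxter relation oriented correctly and track which of the three resulting standard monomials equals the target, so that the right tensor term (and not its transpose) is produced. A second point of care is making sure no standard monomial is double-counted: since $YB_4^{(2)}$ is torsion free with the explicit basis of Proposition~\cite{CG}, each product has a unique standard-form expansion, so one just has to be systematic. I would verify the final formulas by a consistency check: applying $(d\otimes 1 \pm 1\otimes d)$ or, more simply, re-deriving one or two entries by expanding the multiplication in the opposite direction, and by confirming that $\mu^*$ is coassociative on a low-degree piece.
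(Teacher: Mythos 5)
Your proposal coincides with the paper's own (implicit) proof: Lemma~\ref{coproduct} is obtained precisely by dualizing the Yang--Baxter multiplication in the standard monomial basis, using the same rewriting rules spelled out in the proof of Lemma~\ref{dw2}, so your plan is correct and takes essentially the same route. One small caution on wording: a product $m_2\cdot m_1$ whose standard-form expansion contains the target monomial contributes $m_2^*\otimes m_1^*$ (left factor first), as your later ``left cuts'' remark correctly indicates, not $m_1^*\otimes m_2^*$ as your ``resp.''\ sentence might suggest.
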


We can then state the computation.

\begin{Prop}
\begin{eqnarray}
\alpha(B_{12}B_{23}B_{13} \,^*) &=& A_{12}A_{13}+A_{12}A_{23}  \label{8} \\  
\alpha(B_{12}B_{24}B_{14} \,^*) &=& A_{14}A_{12} + A_{12}A_{24} \label{18} \\
\alpha(B_{12}B_{34}B_{24}\,^*) &=& 0  \label{22} \\
\alpha(B_{23}B_{13}B_{24}\,^*) &=& 0 \label{39} \\
\alpha(B_{23}B_{24}B_{14}\,^*) &=& 0 \label{49} \\
\alpha(B_{23}B_{34}B_{24}\,^*) &=& A_{23}A_{24} + A_{23}A_{34} \label{57} 
\end{eqnarray}
\end{Prop}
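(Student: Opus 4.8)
The strategy is a direct computation of $\alpha(w) = [\phi(dw)]$ for the six listed basis elements $w \in W_2$, using the coproduct formulas of Lemma \ref{coproduct}, the explicit values of $\phi$ on $W_0$ and $W_1$ from Definitions \ref{zero} and \ref{one}, and the explicit chain representatives $T$, $(12)T$, $\gamma\cdot\gamma$, etc.\ constructed above for the eleven generators of $H_2(F_4)$. Concretely: for each $w$, apply formula \eqref{formulona}, i.e.\ push the coproduct $\mu^*(w) \in W_1\ot W_0 \oplus W_0\ot W_1$ through $\phi\ot\phi$ and then through the cup product on $\E^*_2(4)$, obtaining a 2-cocycle $\phi(dw) \in \E^2_2(4)$. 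Then evaluate this cocycle on each of the eleven cycle representatives and read off the coefficients; by Corollary \ref{basis} this determines $[\phi(dw)] \in H^2(F_4)$ uniquely.

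The first reduction is that $\phi$ on $W_1$ takes only three kinds of values: $0$, a $\cup_1$-product $\omega_{ab}\cup_1\omega_{cd}$, and a pulled-back Arnold cochain $\pi^*_{abc}(Ar)$ (possibly plus a $\cup_1$-term). Thus $\phi(dw)$ is a sum of terms each of the form $\omega_{ab}\cup(\omega_{cd}\cup_1\omega_{ef})$, $(\omega_{ab}\cup_1\omega_{cd})\cup\omega_{ef}$, $\omega_{ab}\cup\pi^*_{cde}(Ar)$, or $\pi^*_{cde}(Ar)\cup\omega_{ab}$, each of which is a concrete, computable $\Z_2$-cochain on $\mathcal{F}_2(W\Sigma_4)$ whose support is easy to describe: by the remark that $Supp(c\cup_1 c') = Supp(c)\cap Supp(c')$ for $1$-cochains, and by the combinatorial description of $\omega_{ij}$ and of $Ar$ (via \eqref{dar}, \eqref{eq1}, \eqref{eq2}, \eqref{eq3}, pulled back along $\pi_{abc}$ as in the example $\pi^*_{123}((312)^*)$). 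The second reduction, which does most of the work, is Remark \ref{block}: the representing cycles only involve $2$-simplices of two very rigid shapes, so almost every generator in the support of $\phi(dw)$ pairs trivially with them, and the pairing collapses to checking a handful of $2$-simplices per cycle.

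I would organize the computation as six blocks, one per generator, each block listing $\mu^*(w)$ (already given in Lemma \ref{coproduct}), then the resulting cochain $\phi(dw)$ as a sum of the elementary cup expressions above, then a short table of its pairing against the eleven cycles. For the ``diagonal'' generators $B_{12}B_{23}B_{13}{}^*$ and $B_{23}B_{34}B_{24}{}^*$ one expects a nonzero answer because the coproduct contains the full Arnold-type combination $(B_{12}B_{13}^* + B_{12}B_{23}^* + B_{23}B_{13}^*)\ot B_{12}^*$ whose $\phi$-image is $d(Ar)$-like and does not cancel; for the ``mixed'' generators like $B_{12}B_{34}B_{24}{}^*$ the disjointness relations in the Yang--Baxter algebra force enough cancellation (or pairing with only torus-type cycles $\gamma\cdot\gamma$, where the relevant supports are empty) that $\alpha(w)=0$. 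By symmetry under the $\Sigma_4$-action it suffices in principle to treat one generator of each orbit, cutting the six cases down further.

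\textbf{Main obstacle.} The conceptual content is low; the risk is purely bookkeeping. The hard part is controlling the supports of the mixed terms $\omega_{ab}\cup(\omega_{cd}\cup_1\omega_{ef})$ and $\pi^*_{cde}(Ar)\cup\omega_{ab}$ as honest subsets of the $36\cdot 24$-ish generators of $\E^2_2(4)$, and making sure no $2$-simplex of the block form in Remark \ref{block} is silently dropped when claiming $\alpha(w)=0$. I would mitigate this by never expanding these cochains fully: instead, fix a cycle representative, enumerate its $8$ constituent simplices, and for each one test membership in each elementary support by the substring criterion (``$(ab)$ and $(cd)$ are substrings of the first permutation,'' etc.), so that each of the six computations reduces to at most a few dozen substring checks. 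The final assembly is then just summing coefficients mod $2$ and matching against the right-hand sides of \eqref{8}--\eqref{57}.
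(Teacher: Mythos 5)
Your plan is essentially the paper's own proof: for each of the six generators you push the coproduct of Lemma \ref{coproduct} through $\phi_1\otimes\phi_0+\phi_0\otimes\phi_1$ and the cup product as in formula (\ref{formulona}), and then evaluate the resulting cocycle on the explicit planetary-system cycle representatives, using $Supp(c\cup_1 c')=Supp(c)\cap Supp(c')$, the complexity-2 restriction and Remark \ref{block} to cut the pairing down to one or two surviving simplices per case (with cancelling pairs producing the zeros in (\ref{39}) and (\ref{49})); the paper's proof is nothing but the execution of this bookkeeping, which your proposal defers but organizes correctly. One caution: the $\Sigma_4$-symmetry shortcut you mention should not be relied on, since $\phi|_{W_1}$ in Definition \ref{one} is not $\Sigma_4$-equivariant (its case distinctions depend on the ordering of indices) and permutations do not preserve the standard Yang--Baxter basis on the nose, so $\alpha$ itself need not be equivariant and the six cases are best checked individually, as the paper does.
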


\begin{proof}
We need to compute the map in formula \ref{formulona} on our generators.
The coproduct in Lemma \ref{coproduct}
splits as $\mu^*=\mu^*_{1,0} + \mu^*_{0,1}$ with $\mu^*_{1,0}: W_2 \to W_1 \ot W_0$ and $\mu^*_{0,1}:W_2 \to W_0 \ot W_1$.
We prove equation $\ref{8}$ first.
The first part of the coproduct is
$$\mu^*_{1,0}((B_{12}B_{23}B_{13})^*)=  (B_{12}B_{23})^* \ot B_{13}^* + (B_{23} B_{13}^*+B_{12}B_{13}^*+B_{12}B_{23}^*) \ot B_{12}^*\, .$$
Notice that 
$$\phi_1((B_{12}B_{23})^*)= \omega_{12}\cup_1 \omega_{23} =\pi^*_{123}(1 2 3 | 3 2 1)^*$$
has trivial cup product with $\omega_{12}$ and $\omega_{13}$  by the complexity 2 hypothesis (two indices cannot swap more than once in the sequence of permutations defining a simplex).
Similarly $$\phi_1((B_{12}B_{13})^*) \cup \phi_0(B_{12}^*)=    (\omega_{12} \cup_1 \omega_{13})\cup \omega_{12}=0.$$
 The only class contributing is then $(B_{23}B_{13})^* \ot B_{12}^* \,$, mapping via $\phi_1 \otimes \phi_0$ to $ \pi^*_{123}(Ar) \ot \omega_{12} $.
 The cup product $\pi^*_{123}(Ar) \cup \omega_{12}$  
 pairs non-trivially with 2-simplices $(a|b|c)$ such that the relative position of 1 and 2 is $(12 | 12 | 21)$ by the complexity 2 requirement.
The positions of 1,2,3 in the first two permutations $(a|b)$ must match those of $Ar=(132|312)^*$.
By the restrictions in Remark \ref{block}
the only simplex pairing non trivially is $(a|b|c)=(1324|3124|2314)$ that appears in the expansion of the cycle representative of $(A_{12}A_{13})^* $.

The second part of the coproduct  gives 
$$\mu^*_{0,1}((B_{12}B_{23}B_{13})^*)=  B_{12}^* \ot (B_{23}B_{13})^* + B_{23}^* \ot (B_{12}B_{13})^*.$$ 
 The first summand goes to $\omega_{12} \cup \pi^*_{123}(Ar)$, that is trivial since the positions of $1,2$ in the factors are not compatible.
The second summand goes to $\omega_{23} \cup (\omega_{12} \cup_1 \omega_{13})$. 
The only simplex pairing non trivially with a cycle representative is $(1 2 3 4| 1 3 2 4| 3 2 1 4)$. In fact 2 and 3 must swap first, and then 1 must jump both 2 and 3 by the $\cup_1$ product. This simplex appears in the representative of $(A_{12}A_{23})^*$.

We prove next equation \ref{18}. From now on we suppress the cup product from the notation.

By lemma \ref{coproduct} and formula \ref{formulona}

\begin{align*}
\phi(d(B_{12}B_{24}B_{14})^*)=(\om_{12} \cup_1 \om_{14}) \om_{12}+ (\om_{12}\cup_1 \om_{24}) \om_{12}  
+ \pi_{124}^*(Ar)\om_{12}+ \\
+(\om_{12}\cup_1 \om_{24})\om_{14}+
\om_{24}(\om_{12}\cup_1 \om_{14})+ \om_{12} \pi^*_{124}(Ar). 
\end{align*}

The summands pairing non trivially with cycle simplexes are:

\medskip

 $\pi^*_{124}(Ar)\om_{12}$ pairing with $(1423|4123|2413)$ of $(A_{14}A_{12})^*,$  and
 
\medskip
 
$\om_{24}(\om_{12}\cup_1 \om_{14})$ pairing with $(1243|1423|4213)$ of $(A_{12}A_{24})^*$.

\medskip

We prove equation \ref{22}.
\begin{align*}
\phi(d (B_{12}B_{34}B_{24})^*  )= \pi_{234}^*(Ar)\om_{12}+(\om_{12}\cup_1 \om_{24})\om_{23}+(\om_{12}\cup_1 \om_{34})(\om_{23}+\om_{24})+ \\
+(\om_{24}+\om_{34})(\om_{12} \cup_1 \om_{23})+ \om_{34}(\om_{12}\cup_1 \om_{24})+\om_{12} \pi_{234}^*(Ar)
\end{align*}

and all terms pair trivially with cycle generators.

We prove equation \ref{39}.
\begin{align*}
\phi(d(B_{23}B_{13}B_{24})^*) =  (\om_{13}\cup_1 \om_{24}+\om_{23}\cup_1 \om_{24})\om_{12}+(\om_{23}\cup_1 \om_{24})\om_{13}+ \\
+\pi_{123}^*(Ar)\om_{24}+(\om_{13}+\om_{23})(\om_{12}\cup_1 \om_{24})+\om_{24}\pi^*_{123}(Ar)+\om_{23}(\om_{13}\cup_1 \om_{24}).
\end{align*}

The only terms pairing non trivially are

\medskip

 $\pi_{123}^*(Ar)\om_{24}$ pairing with $(1324|3124|3142)$ of $(A_{13}A_{24})^*$ and
 
 \medskip
  
$\om_{24}\pi^*_{123}(Ar)$ pairing with $(1324|1342|3142)$ also of $(A_{13}A_{24})^*$

\medskip

 so the contributions cancel out.

In order to prove equation \ref{49} we consider the cocycle
\begin{align*}
\phi(d(B_{23}B_{24}B_{14})^*   )= (\om_{23}\cup_1 \om_{14}+\om_{23}\cup_1 \om_{24})\om_{12}+\pi^*_{124}(Ar) \om_{23}+(\om_{23}\cup_1 \om_{24})\om_{14}+ \\
+(\om_{14}+\om_{24})(\om_{12}\cup_1 \om_{23})+\om_{24}(\om_{23}\cup_1 \om_{14})+ \om_{23}\pi^*_{124}(Ar).
\end{align*}

\medskip

Here $\pi^*_{124}(Ar) \om_{23}$ pairs with $(1423|4123|4132)$ of $(A_{14}A_{23})^*$ , and

\medskip
 
$\om_{23}\pi^*_{124}(Ar)$ pairs with $(1423|1432|4132)$ also of $(A_{14}A_{23})^*$.

\medskip

 The contributions cancel out.

We finally prove equation \ref{57} by considering the cocycle.
\begin{align*}
\phi(d(B_{23}B_{34}B_{24})^*)= (\om_{23}\cup_1 \om_{24}+\om_{23}\cup_1 \om_{34}+\pi^*_{234}(Ar))\om_{23}+ \\
+(\om_{23}\cup_1 \om_{34})\om_{24}+\om_{34}(\om_{23}\cup_1 \om_{24})+
\om_{23}\pi^*_{234}(Ar).
\end{align*}

\medskip

Here $\pi^*_{234}(Ar))\om_{23}$ pairs with $(2431|4231|3421)$ of $(A_{23}A_{24})^*$, and

\medskip

$\om_{34}(\om_{23}\cup_1 \om_{24})$ pairs with $(2341|2431|4321)$ of $(A_{23}A_{34})^*$.

\medskip

\end{proof}
 
\section{Non-formality and Hochschild cohomology} \label{sei}

In this section we identify the obstruction class as an element in the Hochschild cohomology of the cohomology algebra of the configuration space.

Let us write $H=H^*(F_k)$ and $W=s(\wt{YB_k})^*$.

The graded vector space $W$ concentrated in degree 1 splits as $W = \oplus_t W_t$, with $W_t$ suspended dual of the vector space generated by words of length $t+1$.

\begin{Def}
The degree 0 homomorphism $\tau:W  \to H$ sending $s(B_{ij})^* \mapsto A_{ij}$ and all other generators to zero is the standard {\em twisting cochain}.
\end{Def}

\begin{Def}
The {\em convolution algebra} structure on  $Hom(W,H)$ uses the coproduct structure on the domain and the product on the range to 
obtain an associative product of degree 1 \cite{BB}. For $f,g:W \to H$, we have the composition
$$f \star g: W \to W \ot W \xrightarrow{f \ot g} H \ot H \to H.$$
\end{Def}

\begin{Lem} \label{startau}
$\tau \star \tau = 0 $. 
\end{Lem}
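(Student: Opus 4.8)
The plan is to unwind the definitions of $\star$ and $\tau$ and show that the resulting map $W \to H$ is identically zero. By definition $\tau \star \tau$ is the composite $W \xrightarrow{\mu^*} W \ot W \xrightarrow{\tau \ot \tau} H \ot H \xrightarrow{\cup} H$, where $\mu^*$ is the coproduct on $W$ dual to the multiplication $\mu$ of $YB_k$ restricted to the augmentation ideal. Since $\tau$ kills everything except the elements $s(B_{ij})^*$, only the $W_0 \ot W_0$ component of $\mu^*$ contributes; so the first step is to observe that $\tau \star \tau$ is supported on $W_1$ (the suspended dual of words of length two), and there it is governed entirely by which products $B_{ij}B_{kl}$ expand, in the standard basis, to a sum of \emph{quadratic} monomials.

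Second, I would read off that expansion from Lemma~\ref{dw2}: the differential $d$ on $W_1$ of the bigraded model is precisely dual to the quadratic part of $\mu$, and Lemma~\ref{dw2} lists the four cases. Composing with $\tau \ot \tau$ (which sends $B_{ij}^* \mapsto A_{ij}$) and then with the product in $H$, the four cases of Lemma~\ref{dw2} become, respectively: $A_{ij}^2$; $A_{ij}A_{kl}+A_{kl}A_{ij}$ for $j<l$; $A_{ij}A_{kl}+A_{il}A_{ki}+A_{kl}A_{ki}$ for $j=l,\ k<i$; and $A_{ij}A_{kl}+A_{il}A_{ik}+A_{kl}A_{ik}$ for $j=l,\ k>i$. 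Thus $(\tau\star\tau)((B_{ij}B_{kl})^*) = \mu\bigl(\tau\cup\tau\bigr)$ evaluated on the dual basis is exactly $\rho(d((B_{ij}B_{kl})^*))$ where $\rho$ is the resolution map of Example~\ref{boh}; more directly, it is the image in $H$ of the relations defining $H$.

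Third — and this is the actual content — I check that each of these four expressions vanishes in $H = H^*(F_k(\R^2);\Z_2)$, using the Arnold presentation with signs killed: relation (2) gives $A_{ij}^2=0$, relation (1) gives $A_{ij}=A_{ji}$, relation (3) gives $A_{ij}A_{jk}+A_{jk}A_{ki}+A_{ki}A_{ij}=0$, and relations (2)-(3) together imply graded-commutativity $A_{ij}A_{kl}=A_{kl}A_{ij}$ (over $\Z_2$, $A_{ij}$ has degree $1$, so one must verify commutativity from the relations, which is the standard fact that $H^*(F_k(\R^2))$ is commutative). Case one is relation (2); case two is commutativity; for cases three and four, after using $A_{ki}=A_{ik}$ and commutativity, the three-term sum is exactly an Arnold relation (3) on the index triple $\{i,k,l\}$ — possibly after reindexing — hence zero. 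Assembling: $\tau\star\tau$ vanishes on all of $W_1$ and trivially on $W_t$ for $t\neq 1$, so $\tau\star\tau=0$.

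The main obstacle is purely bookkeeping: matching the four index-ordering cases of Lemma~\ref{dw2} against the correctly-oriented Arnold relations on the right triple of indices, and being careful that over $\Z_2$ the sign conventions in relations (1) and (3) collapse but graded-commutativity of $H^*(F_k(\R^2))$ still has to be invoked (it holds because the $A_{ij}$ satisfy $A_{ij}^2=0$ and the Arnold relation, which forces the algebra to be commutative — equivalently because $H^*(F_k(\R^2))$ is the cohomology ring of a space and lives in a range where Steenrod-square corrections vanish, or simply because it is the associated graded of the Orlik-Solomon-type algebra). No deep input is needed beyond Lemma~\ref{dw2} and the Arnold-Cohen presentation already recalled.
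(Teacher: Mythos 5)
Your proof is correct and follows exactly the route the paper takes (its own proof is a one-liner: the coproduct structure of Lemma \ref{dw2} plus the relations in $H$), with your write-up simply making explicit that $\tau\star\tau$ is supported on $W_1$ and that the four cases land on $A_{ij}^2=0$, commutativity, and the Arnold relation. The only blemish is the parenthetical about Steenrod-square corrections, which is unnecessary --- graded commutativity of the cup product holds on the nose in cohomology, and over $\Z_2$ in degree one that is all you need.
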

\begin{proof}
The lemma follows from the coproduct structure described in Lemma \ref{dw2} and the relations holding in the cohomology algebra $H$.
\end{proof}

\begin{Prop} \label{partial}
There is a differential on the convolution algebra 

$$\partial (f)= f \star \tau + \tau \star f$$
\end{Prop}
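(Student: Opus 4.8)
The plan is to verify directly that $\partial$ squares to zero and respects the (shifted) algebra structure, treating $\mathrm{Hom}(W,H)$ as a right-left bimodule-style convolution setup. First I would record the two structural facts that make everything work: the coproduct $\mu^*:W\to W\otimes W$ is coassociative (it is dual to the associative multiplication of $YB_k$, as used implicitly in Lemma \ref{dw2} and Lemma \ref{coproduct}), and the $\star$-product on $\mathrm{Hom}(W,H)$ is therefore associative, of degree $1$. With associativity in hand, the formula $\partial(f)=f\star\tau+\tau\star f$ is just left-plus-right multiplication by the fixed element $\tau$ in a (graded) associative algebra, so the only things to check are that $\partial$ is a well-defined degree $+1$ map (clear, since $\tau$ has degree $0$ and $\star$ raises degree by $1$) and that $\partial^2=0$.

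Second, I would compute $\partial^2$ formally. Expanding,
\begin{equation*}
\partial^2(f)=\partial(f\star\tau+\tau\star f)=(f\star\tau)\star\tau+\tau\star(f\star\tau)+(\tau\star f)\star\tau+\tau\star(\tau\star f).
\end{equation*}
By associativity of $\star$ the middle two terms agree, $\tau\star(f\star\tau)=(\tau\star f)\star\tau$, so modulo the characteristic they cancel, leaving $f\star(\tau\star\tau)+(\tau\star\tau)\star f$. By Lemma \ref{startau} we have $\tau\star\tau=0$, hence $\partial^2(f)=0$. (We are working over $\Z_2$, so the sign bookkeeping that would otherwise appear for a degree-$1$ operator is trivial; the two cross terms cancel simply because they are equal.) This is the substance of the proposition.

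The one point that deserves care — and the step I expect to be the main obstacle in making the argument airtight — is the associativity of $\star$ together with the compatibility of $\partial$ with $\star$, i.e. the graded Leibniz identity $\partial(f\star g)=\partial(f)\star g+f\star\partial(g)$, which is what justifies calling $\partial$ a \emph{differential on the convolution algebra} rather than merely a square-zero operator. Associativity of $\star$ follows from coassociativity of $\mu^*$ and associativity of the product on $H$, both of which hold on the nose here; I would state this explicitly, perhaps citing \cite{BB} for the general convolution-algebra formalism. The Leibniz rule is then again pure associativity: over $\Z_2$, $\partial(f\star g)=f\star g\star\tau+\tau\star f\star g$ and $\partial(f)\star g+f\star\partial(g)=(f\star\tau+\tau\star f)\star g+f\star(g\star\tau+\tau\star g)$, and the two $f\star\tau\star g$ terms cancel, leaving exactly $f\star g\star\tau+\tau\star f\star g$. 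So both the square-zero property and the Leibniz property reduce, after invoking $\tau\star\tau=0$ from Lemma \ref{startau}, to cancellations forced by associativity; the proof is a few lines once associativity of $\star$ is noted.
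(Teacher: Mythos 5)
Your argument is correct and is essentially the paper's proof: the paper disposes of the proposition in one line by saying that $\partial$ squares to zero by Lemma \ref{startau}, and your expansion of $\partial^2(f)$, with the two cross terms cancelling over $\Z_2$ by associativity of $\star$ (coming from coassociativity of $\mu^*$ dual to the associative product of $YB_k$) and the remaining terms killed by $\tau\star\tau=0$, is exactly the implicit content of that line. Your extra verification of the graded Leibniz rule is a harmless addition not spelled out in the paper.
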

\begin{proof}
 $\partial$ squares to zero by lemma \ref{startau}.
\end{proof}
The complex $C=Hom(W,H)$ has a bigrading with $C^{p,q}=Hom(W_{p-1},H^{p+q})$, that is inherited by its cohomology  
$H(C^{*,*},\partial).$ 

\begin{Prop} \cite{BB}
There is a bigraded isomorphism  
$$ H(C^{*,*},\partial) \cong HH^{*,*}(H,H)$$
between the cohomology of the convolution algebra and the Hochschild cohomology of the cohomology algebra $H=H^*(F_k)$ with coefficients in itself. 
\end{Prop}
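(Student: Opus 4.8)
The plan is to identify $(C^{*,*},\partial)$ with the Hochschild cochain complex of $H$ written through the Koszul duality between $H$ and $YB_k$, and then to compare it with the standard bar description of Hochschild cohomology; this is essentially the theorem of \cite{BB}, and I would organize the argument as follows.

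First I would recall that, over $\Z_2$, the Hochschild cohomology $HH^{*}(H,H)$ of a connected graded algebra $H$ is computed by the convolution complex $(\mathrm{Hom}(\bar{B}H,H),\partial_\kappa)$, where $\bar{B}H$ is the reduced bar coalgebra equipped with its bar differential $b$, where $\kappa\colon\bar{B}H\to H$ is the canonical twisting cochain (desuspension on $s\widetilde H$, zero on higher tensor powers), and where $\partial_\kappa(f)=fb+\kappa\star f+f\star\kappa$. The grading of $\bar{B}H$ by tensor length, together with the residual internal grading, produces exactly a bigrading on $HH^{*,*}(H,H)$; I would fix conventions here so that they match those of $C^{p,q}=\mathrm{Hom}(W_{p-1},H^{p+q})$.

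Next I would invoke that $H=H^*(F_k)$ is Koszul (Example~32 of \cite{Berglund}) with Koszul dual algebra $YB_k$ (Proposition~\ref{ybb}). Writing $Q:=(\widetilde{YB_k})^{*}$ for the Koszul dual coalgebra --- the graded dual of the augmentation ideal of $YB_k$, with coproduct dual to the product and, since $YB_k$ is quadratic, with zero differential --- there is a canonical morphism of dg-coalgebras $\iota\colon Q\hookrightarrow\bar{B}H$, and Koszulness of $H$ is precisely the assertion that $\iota$ is a quasi-isomorphism (equivalently, that $\rho\colon(T(W),d)=\Omega Q\to H$ of Example~\ref{boh} is one). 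Under the identification $W=sQ$ one checks $\kappa\circ\iota=\tau$, so the twisted differential that $\iota$ induces on $\mathrm{Hom}(Q,H)=C$ is $f\mapsto f\star\tau+\tau\star f$ --- the term $fb$ disappearing because $b|_{Q}=0$ --- which is exactly $\partial$ of Proposition~\ref{partial}; Lemma~\ref{startau} is then the Maurer--Cartan identity making $\tau$ a genuine twisting cochain (using also that $H$ has zero differential). Thus $\iota$ yields a morphism of bigraded convolution dg-algebras $\iota^{*}\colon(\mathrm{Hom}(\bar{B}H,H),\partial_\kappa)\to(C^{*,*},\partial)$.

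Finally I would prove that $\iota^{*}$ is a quasi-isomorphism, which is the one step requiring real care. Since $YB_k$ has finite type for its bigrading (Example~\ref{boh}), the coalgebras $Q$ and $\bar{B}H$ are bounded below and of finite type in each bidegree; I would apply the exact functor $\mathrm{Hom}(-,H)$ to $0\to Q\to\bar{B}H\to\bar{B}H/Q\to 0$, filter $\mathrm{Hom}(\bar{B}H/Q,H)$ by tensor length so that its associated graded differential is induced by $b$ alone, observe that this associated graded is the $\mathrm{Hom}(-,H)$-dual of an acyclic bounded-below complex of finite-dimensional $\Z_2$-vector spaces and hence acyclic, and conclude via the corresponding spectral sequence and the long exact sequence that $\iota^{*}$ induces an isomorphism on cohomology, compatibly with both gradings. \textbf{The main obstacle} I anticipate is precisely this comparison: passing from the quasi-isomorphism $\iota$ of coalgebras to a quasi-isomorphism of the \emph{twisted} convolution complexes in a rigorous way --- controlling the interaction between the tensor-length filtration, the $\mathrm{Hom}(-,H)$-dualization, and the finite-type hypothesis --- together with the purely bookkeeping task of matching the bigrading conventions with those of \cite{BB}.
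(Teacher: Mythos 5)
The paper itself gives no argument here (it simply cites \cite{BB}), and your overall strategy --- identify $(C^{*,*},\partial)$ with the twisted convolution complex $\mathrm{Hom}(Q,H)$ on the Koszul dual coalgebra $Q=(\wt{YB_k})^*$, check $\kappa\circ\iota=\tau$ so that restriction along $\iota\colon Q\hookrightarrow \bar{B}H$ is a map of twisted complexes, and use Koszulness of $H$ to see it is a quasi-isomorphism --- is exactly the standard Koszul-duality argument that the cited source runs. Your first two paragraphs are fine. The problem is in the step you yourself single out as the main obstacle, and it is not just a bookkeeping issue: \textbf{filtering $\mathrm{Hom}(\bar{B}H/Q,H)$ by tensor length cannot make the associated graded differential ``$b$ alone''}. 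Every component of the twisted differential raises the tensor length of the support by exactly one: dualizing $b$ does (since $b$ merges two bar factors), but so do $\kappa\star f$ and $f\star\kappa$, since they split off one bar factor and feed it to $\kappa$. Hence tensor length is an honest cohomological grading of the twisted complex (this is just the usual ``simplicial'' degree of Hochschild cochains), and no filtration defined in terms of it can separate the bar part from the twisting part; the spectral sequence you describe does not exist in the form stated.

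The fix is to filter by the \emph{internal} degree of the source instead: $b$ preserves it, while the $\kappa$-terms shift it by one because $\kappa$ is supported on $sH^1$. With that filtration the associated graded of $\mathrm{Hom}(\bar{B}H/Q,H)$ is the $\mathrm{Hom}(-,H)$-dual of the acyclic complex $\bar{B}H/Q$, taken internal degree by internal degree, where it is bounded (weight $\le$ internal degree) and of finite type, hence acyclic. One must still address convergence, since $H^1\neq 0$ means $s\wt{H}$ has degree-zero elements and each total degree of $\mathrm{Hom}(\bar{B}H,H)$ receives contributions from infinitely many weights; working bidegree by bidegree in the (weight, internal degree) bigrading, which is of finite type, handles this. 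Alternatively you can bypass the comparison of convolution complexes altogether: Koszulness of $H$ says precisely that the Koszul bimodule complex $H\otimes_\tau Q\otimes_\tau H\to H$ is a resolution of $H$ by semifree $H$-bimodules, and $\mathrm{Hom}_{H\otimes H}(H\otimes Q\otimes H,H)\cong(C^{*,*},\partial)$ on the nose; comparing with the bar resolution by standard homological algebra then gives the bigraded isomorphism with $HH^{*,*}(H,H)$ with no spectral sequence at all.
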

We follow the bigrading convention in \cite{NS} for Hochschild cohomology.
We claim that $\alpha:W_2 \to H^2$ (see Definition \ref{alpha}) represents a non-trivial Hochschild cohomology class of bidegree $(3,-1)$. This will imply that $H$ is not formal. 

\begin{Lem}
$\alpha:W_2 \to H^2$ is a cocycle in the Hochschild complex.
\end{Lem}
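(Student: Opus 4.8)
The plan is to show that $\partial\alpha = 0$ in the convolution complex $C = \mathrm{Hom}(W,H)$, where $\partial(f) = f\star\tau + \tau\star f$. Since $\alpha\colon W_2 \to H^2$ has convolution-bidegree $(3,-1)$, the differential $\partial\alpha$ is an element of $\mathrm{Hom}(W_3, H^3)$, so I must verify that $\partial\alpha$ vanishes on every standard basis generator $w \in W_3$, i.e. on each suspended dual of a length-$4$ word $B_{i_1j_1}B_{i_2j_2}B_{i_3j_3}B_{i_4j_4}$. Unwinding the definition, $(\partial\alpha)(w) = (\alpha\star\tau)(w) + (\tau\star\alpha)(w)$ is obtained by applying the coproduct $\mu^*\colon W_3 \to (W_2\otimes W_0)\oplus(W_0\otimes W_2)$ (the other Künneth components $W_1\otimes W_1$ etc.\ do not contribute, since $\tau$ kills everything outside $W_0$ and $\alpha$ is supported on $W_2$), then applying $\alpha\otimes\tau + \tau\otimes\alpha$, and finally multiplying in $H$.

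The cleanest route is conceptual rather than computational: $\alpha$ was \emph{defined} (Definitions \ref{iota}, \ref{alpha}) as the class of the error term $\phi\circ d\colon W_2 \to T(W_0)$ arising in the filtered-model construction, and the statement ``$D^2 = 0$'' on $T(W)$ through filtration level $3$ is exactly the cocycle condition. More precisely, on $W_3$ we have $D(w) = d(w) + (\text{correction in lower filtration})$, and the identity $D^2(w) = 0$, read modulo the filtration pieces that $\partial$ does not see, forces the component of $(\phi\circ d)$-type terms to assemble into $\partial\alpha(w) = 0$. So the first step is to recall from the proof of Theorem \ref{222} (as in \cite{HS,EH}) that the filtered model does exist, hence $D$ with $D^2=0$ extends to $W_3$; the second step is to extract from $0 = D^2$ restricted to $W_3$, after projecting onto the appropriate bigraded summand of $T(W_0)$, precisely the equation $\alpha\star\tau + \tau\star\alpha = 0$. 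This uses that $d$ itself is dual to the (associative) Yang–Baxter multiplication, so the ``$d\circ d = 0$'' part is $\tau\star\tau$-type and already handled by Lemma \ref{startau}, while the mixed term is the $\partial\alpha$ we want.

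Concretely I would argue as follows. Write $w\in W_3$ and let $\mu^*(w) = \sum w'\otimes w'' + \sum \bar w'\otimes\bar w''$ with $w'\in W_2, w''\in W_0$ in the first sum and $\bar w'\in W_0, \bar w''\in W_2$ in the second (dropping terms with both factors in positive level, which die). On one hand, the coassociativity of $\mu^*$ (dual to associativity of $YB_k$) relates the iterated coproduct of $w$ to the coproducts of the $w'$ and $\bar w''$ appearing here. On the other hand, $D(w) = d(w) - \iota_3[\phi(dw)]$ by the analogue of Definition \ref{iota} at level $3$, and imposing $\phi(D(w))$ to be exact — which it is, since the filtered model exists — together with the already-established relation $d\phi = \phi d$ on $T(W_{\le 2})$, shows that the coboundary obstruction $[\phi(dw)]$ decomposes exactly as $\sum \alpha(w')\cdot\tau(w'') + \sum \tau(\bar w')\cdot\alpha(\bar w'')$ up to an exact term, and being a well-defined cohomology class it must vanish. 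That vanishing is $(\partial\alpha)(w) = 0$.

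The main obstacle is bookkeeping: one must be careful that the only Künneth components of the triple coproduct contributing to $D^2(w)$ modulo higher filtration are exactly the $(W_2,W_0)$ and $(W_0,W_2)$ pieces paired against $\alpha$ and $\tau$, and that the $(W_1,W_1)$-type and $(W_0,W_0)$-type contributions either vanish (Lemma \ref{startau}) or land in a different bigraded summand and can be discarded. A safe alternative, if one prefers to avoid invoking the full machinery of \cite{HS,EH} at level $3$, is the brute-force check: enumerate the length-$4$ Yang–Baxter basis words (equivalently the generators of $W_3$), compute $\mu^*$ on each via Lemma \ref{dw2}'s rewriting rules, substitute the values of $\alpha$ from Proposition (equations \ref{8}–\ref{57}) and $\tau(s B_{ij}^*) = A_{ij}$, multiply in $H = H^*(F_k)$ using the Arnold relations, and observe everything cancels in pairs. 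I expect the conceptual argument to be shorter and more illuminating, so I would present that and relegate any residual case-checking to a remark.
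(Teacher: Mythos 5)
Your overall strategy is the paper's own---derive $\partial\alpha=0$ from the filtered-model machinery through filtration $3$---but the pivotal concrete step would fail as written. For $w\in W_3$ the cochain $\phi(dw)$ is in general \emph{not} a cocycle of $\E_2^*(4)$: by Proposition \ref{prop5.6} the map $\phi$ intertwines $D$ (not $d$) with the cochain differential $d_{\E}$ on $T(W_{\leq 2})$, and since $d^2w=0$ one gets $d_{\E}\,\phi(dw)=\phi\bigl((D-d)(dw)\bigr)$, which is exactly the $\alpha$-type correction you are trying to control; the two differentials agree only on $T(W_{\leq 1})$. Hence the class $[\phi(dw)]$ you invoke is undefined, the formula $D(w)=d(w)-\iota_3[\phi(dw)]$ is not the level-$3$ construction (the correction there in general also involves filtration-$1$ terms), and your cited relation ``$d\phi=\phi d$ on $T(W_{\leq 2})$'' is false---its failure on $W_2$ is precisely $\alpha$. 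The closing inference ``being a well-defined cohomology class it must vanish'' is also not a reason: well-definedness does not imply vanishing; what kills the relevant class is exactness, which your argument never actually establishes.

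The paper's proof sidesteps extending $D$ to $W_3$ altogether: for $z\in W_3$ write $dz=\sum_i a_ib_i+\sum_k c_kc'_k+\sum_j b'_ja'_j\in W_2\cdot W_0+W_1\cdot W_1+W_0\cdot W_2$ and apply $D$ to $dz$. Since $d^2=0$, and $D-d$ lowers filtration by $2$ and vanishes on $W_{\leq 1}$, one has $D(dz)=\sum_i\iota([\phi\,da_i])\,b_i+\sum_j b'_j\,\iota([\phi\,da'_j])\in T(W_0)$; by Proposition \ref{prop5.6}, $\phi(D(dz))=d_{\E}\,\phi(dz)$ is exact, and its cohomology class is $\sum_i\alpha(a_i)\rho(b_i)+\sum_j\rho(b'_j)\alpha(a'_j)=\partial\alpha(z)$, whence $\partial\alpha=0$. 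Your $D^2=0$ idea can be repaired in the same spirit: writing $Dz=dz+\xi$ with $\xi$ of filtration $\leq 1$, the identity $D^2z=0$ gives $(D-d)(dz)=d\xi$, and applying $\rho$ (which kills $d$-boundaries and satisfies $\rho\iota=\mathrm{id}$) again yields $\partial\alpha(z)=0$. As submitted, however, the argument rests on an undefined class and a false commutation relation, so it does not go through; the brute-force check you offer as a fallback would work, but it is not what you carried out.
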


\begin{proof}
We must show that $0= \partial(\alpha):W_3 \to H^3(F_4)$.
Given an element $z \in W_3$, consider the differential 
$$dz =\sum_i a_i \cdot b_i + \sum_k c_k \cdot c'_k + \sum_j b'_j \cdot a'_j   \in W_2 \cdot W_0 + W_1 \cdot W_1 + W_0 \cdot W_2 \subset T(W_{\leq 2}).$$
Then $D(dz)-ddz= D(dz)$ is in filtration level 0, namely $D(dz) \in TW_0$, and it has cohomological degree 3.
Since $\phi:(T(W_{\leq 2}),D) \to \mathcal{E}_2^*(4)$ commutes with the differential by Proposition  \ref{prop5.6},
and $\phi^*:W_0 \to H^*(\mathcal{E}_2^*(4))=H^*(F_4)$  coincides with the projection from the bigraded model  by Theorem \ref{222},
it follows that $\phi^*[D(dz)]$ is a trivial cohomology class in $H^3(F_4)$. 
Observe that $D-d$ is trivial on the summand $W_1 \cdot W_1$. Then   
$$D(dz)= \sum_i \iota([\phi da_i]) b_i + \sum_j  b'_j \iota([\phi da'_j]) $$ and 
$\phi(D(dz))$
represents the (trivial) cohomology class  
$$\sum_i \alpha(a_i)  \rho(b_i) + \sum_j \rho(b'_j) \alpha(a'_i)$$
that by definition of $\partial$ in Proposition \ref{partial} is equal to $\partial(\alpha)(z) \in H^3(F_4)$. This shows that $\partial(\alpha)=0$.
\end{proof}

\begin{Lem} \label{impos}
There is an isomorphism $\psi:(TW_{\leq 2},d) \to (TW_{\leq 2},D)$ such that  $\psi-id$ lowers the filtration level
if and only if  $[\alpha] \in HH^2(H^*F_4,H^*F_4)$  is trivial.
\end{Lem}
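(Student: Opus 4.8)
The plan is to show directly that an isomorphism $\psi:(TW_{\leq 2},d)\to(TW_{\leq 2},D)$ with $\psi-\mathrm{id}$ filtration-lowering is the same datum as a nullhomotopy of the cocycle $\alpha$ in the convolution complex $C=Hom(W,H)$. Since $\psi$ is a DGA map, it is determined by its restriction to the generators $W_{\leq 2}$, and since $\psi-\mathrm{id}$ lowers the filtration level, on $W_0$ and $W_1$ we have $\psi=\mathrm{id}$ (there is nothing of strictly lower level for $W_0$, and for $W_1$ the only lower-level target $T_2(W_0)$ is in filtration $0$, but then $\psi|_{W_1}$ would have to land in $W_1+T_2(W_0)$ with the $T_2(W_0)$-component a cocycle modification; one checks using Lemma \ref{dw2} and Lemma \ref{startau} that this component can be absorbed, so WLOG $\psi|_{W_{\leq 1}}=\mathrm{id}$). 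Thus the only freedom is a linear map $h:W_2\to T_2(W_0)\oplus W_1$ with $\psi(w)=w+h(w)$ for $w\in W_2$; writing $h=h_2\oplus h_1$ with $h_2:W_2\to T_2(W_0)$ and $h_1:W_2\to W_1$, the component $h_2$ is exactly a map $W_2\to H^{\otimes 2}\hookrightarrow$ (via $\iota$) a Hochschild $1$-cochain of the right bidegree, while $h_1$ is irrelevant at the level of cohomology classes.

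Next I would write out the compatibility condition $\psi\circ d = D\circ\psi$ on $W_2$. The left side is $d(w)+(\text{terms from }h\text{ applied to }dw)$; since $dw\in T(W_{\leq 1})$ and $\psi=\mathrm{id}$ there, the left side is just $dw$. The right side is $D(w)+D(h(w)) = d(w)-\iota[\phi(dw)] + D(h(w))$ by Definition \ref{iota} and Definition \ref{alpha}, and since $h(w)$ has filtration level $\leq 1$, $D(h(w))=d(h(w))$ modulo filtration-$0$ corrections which I track carefully. Equating and using that $d$ on $T_2(W_0)$ is zero (as $W_0$ is in level $0$ and $d$ lowers level), the surviving equation reads, for all $w\in W_2$,
\begin{equation*}
\iota\bigl[\phi(dw)\bigr] = d\bigl(h_1(w)\bigr) \quad\text{in } T_2(W_0),
\end{equation*}
i.e. $\iota(\alpha(w)) = d(h_1(w))$. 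Now $d$ on $W_1$ is, by Lemma \ref{dw2}, precisely (the $\iota$-image of) $\tau\star\tau$-type expressions — more precisely, identifying $W_1$ with Hochschild $1$-cochains via $\iota$, the map $d:W_1\to T_2(W_0)$ becomes the Hochschild differential $\partial$ applied to $0$-cochains, or rather the relevant coboundary operator $\partial$ from Proposition \ref{partial}. Hence the existence of $h_1$ solving $\iota(\alpha)=d(h_1)$ is exactly the statement that $\alpha$, viewed in $C^{3,-1}=Hom(W_2,H^2)$, is a coboundary $\partial(h_1)$ for some $h_1\in C^{2,-1}=Hom(W_1,H^2)$; that is, $[\alpha]=0$ in $HH^{3,-1}(H,H)$.

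The main obstacle is bookkeeping: one must be careful that the filtration-lowering part of $\psi$ on $W_1$ really can be taken trivial (this uses the uniqueness clause in Theorem \ref{222} together with the fact that $(TW,d)$ has vanishing positive-level homology from Proposition \ref{res}, so any level-lowering cocycle perturbation on $W_1$ is itself a boundary and can be gauged away), and that the identification of $d|_{W_1}:W_1\to T_2(W_0)$ with the convolution differential $\partial$ restricted to the appropriate bidegree is exactly right — including signs, which over $\Z_2$ are irrelevant, simplifying matters considerably. I would also note that this lemma is the configuration-space instance of Proposition \ref{341}: two filtered models over the same bigraded model differ by such a $\psi$ iff the underlying DGAs are quasi-isomorphic, so $(TW,D)\simeq(TW,d)\simeq H^*(F_4)$ would follow from triviality of $[\alpha]$, and conversely $\E_2^*(4)\simeq H^*(F_4)$ would force $\psi$ to exist; the content here is the precise translation into Hochschild cohomology, after which Section \ref{sei} only needs to exhibit that $[\alpha]\neq 0$ using the six computed values from Proposition above.
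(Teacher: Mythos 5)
Your reduction to $\psi|_{W_{\leq 1}}=\mathrm{id}$ is where the argument breaks, and it breaks in a way that removes the content of the lemma. First, the degree bookkeeping is off: all of $W_0,W_1,W_2$ sit in cohomological degree $1$ (they are suspended duals of degree-zero words in the $B_{ij}$, since $n=2$), so a degree-preserving $\psi$ with $\psi-\mathrm{id}$ filtration-lowering must satisfy $\psi(W_1)\subseteq W_1\oplus W_0$ and $\psi(W_2)\subseteq W_2\oplus W_1\oplus W_0$; nothing can land in $T_2(W_0)$, which has degree $2$. In particular your $h_2:W_2\to T_2(W_0)$ is forced to vanish, and, more importantly, the freedom you discard on $W_1$ is not a ``cocycle modification into $T_2(W_0)$'' but an arbitrary linear map $f:W_1\to W_0\cong H^1(F_4)$, i.e.\ precisely a Hochschild cochain in $C^{2,-1}=Hom(W_1,H^1)$. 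There is no legitimate ``WLOG'' here: the lemma asks whether \emph{some} $\psi$ exists, and restricting to those with $\psi|_{W_1}=\mathrm{id}$ changes the question; the uniqueness clause of Theorem \ref{222} and the acyclicity of Proposition \ref{res} do not let you gauge $f$ away without altering $\psi$ on $W_2$ in exactly the way that produces the coboundary term.

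Second, with your normalization the compatibility equation on $W_2$ becomes $\iota(\alpha(v))=d(h_1(v))$ with $h_1(v)\in W_1\oplus W_0$. But $d(W_1)\subset T_2(W_0)$ is spanned by the quadratic relations, i.e.\ it is the kernel of the multiplication $T_2(W_0)\cong H^1\ot H^1\to H^2$, while $\iota$ is a section of that projection; hence your equation forces $\alpha(v)=0$ for every $v$ --- vanishing of the \emph{cochain} $\alpha$, not of its class $[\alpha]$. Your criterion is therefore strictly stronger than the statement, and the direction ``$[\alpha]=0\Rightarrow\psi$ exists'' cannot be recovered this way. The paper instead keeps the gauge: writing $\psi|_{W_1}=\mathrm{id}+f$ and $\psi(v)=v+w$ with $w\in W_1\oplus W_0$ for $v\in W_2$, the obstruction to solving $D\psi(v)=\psi(dv)$ is the class $[Dv-\psi_1(dv)]=\alpha(v)+\partial(f)(v)\in H^2$ (a degree-$2$ $d$-cocycle in $T(W_{\leq 1})$ is a $d$-coboundary of an element of $W_1\oplus W_0$ iff its cohomology class vanishes, by acyclicity of the bigraded model in positive level), so $\psi$ exists iff $\alpha=\partial(f)$ for some $f\in Hom(W_1,H^1)$, i.e.\ iff $[\alpha]=0$. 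Note also that the trivializing cochain lives in $C^{2,-1}=Hom(W_1,H^1)$, not $Hom(W_1,H^2)$, and that your $h_1:W_2\to W_1$ is not a Hochschild cochain in any bidegree relevant to $\alpha$.
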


\begin{proof}
By assumption $\psi=id$ on $W_0$,
 $\psi(W_1) \subseteq W_1 \oplus W_0$ and $\psi(W_2) \subseteq W_2 \oplus W_1 \oplus W_0$, since 
 each $W_i$ has upper degree 1. We write $\psi_i$ for the restriction of $\psi$ to the filtration level $i$.

The value of $\psi$ on $W_1$ is determined by a homomorphism $f:W_1 \to W_0 \cong H^1(F_4)$ such that $\psi |_{W_1}=id+f$. 
Since $d=D$ on $W_1$, and $D(W_0)=d(W_0)=0$, for any such choice $\psi$ obviously commutes with the differentials up to filtration level 1.

Now to make $\psi$ commute with the differential in filtration level 2 we need to construct
$\psi_2(v)=v+w $ , with $w \in W_1 \oplus W_0$.
Then $D\psi_2(v)=Dv+dw$ must be equal to $\psi_1(dv)$.
This is possible if and only if 
the cocycle $Dv-\psi_1(dv)$, for any $v \in W_2$, is a $d$-coboundary.  We have from Definition \ref{iota} 
 that $Dv-\psi_1(dv)= \iota(\alpha(v))+\bar{\partial}(f)(v)$ where 
$\bar{\partial}(f)$ is the composition
$$W_2 \lrh  (W_1 \ot W_0) \oplus (W_0 \ot W_1)  \xrightarrow{f \ot 1 + 1 \ot f} W_0 \ot W_0 \stackrel{\mu}{\lrh} T_2(W_0) .$$
The cohomology class of $[\bar{\partial}(f)(v)]$ is exactly $\partial(f)(v) \in H^2$, with 
$$\partial: Hom^0(W_1,H^1) \rightarrow Hom^1(W_2,H^2)$$ 
as in Proposition \ref{partial}. 
In cohomology $[Dv-\psi_1(dv)]=\alpha(v)+\partial(f)(v)$ vanishes for any $v \in W_2$ if and only if $\psi$ exists.  
So $\psi$ exists if and only if there is some $f$ such that $\partial(f)=\alpha$.

\end{proof}
\
Notice that $dim(W_1)=25, \; dim(H^1)=6, \; dim(W_2)=90, \; 
dim(H^2)=11.$

\medskip

We prove next that $[\alpha]$ is not trivial, by exhibiting a cycle $\beta$ in the dual complex such that $[\beta]$ pairs non-trivially with it. 

\medskip

Let us consider the dual complex $C^*= W \ot H_*$, the tensor product of the dual Yang-Baxter algebra with the homology of the configuration space.
Given the coproduct $\mu^*: W_p \to (W_{p-1} \ot W_0) \oplus ( W_0 \ot W_{p-1})$ we have the composite differential 
\begin{equation} \label{deltabar}
\partial^*: W_p \ot H_q \xrightarrow{ (id \ot \tau \oplus \tau \ot id)  \circ \mu^* } (W_{p-1} \ot H^1 ) \oplus (H^1 \ot W_{p-1})  \ot H_q 
\stackrel{\cap}{\lrh}  W_{p-1} \ot H_{q-1}
\end{equation}

Let us consider the element 

\begin{align*}
\beta &=  B_{12} B_{23} B_{13}\,^* \ot (A_{13} A_{14}\,^* + A_{13} A_{24}\, ^*) +  \\
&+ B_{12}B_{24}B_{14}\,^* \ot  A_{12} A_{14}\,^* + \\
&+ B_{12}B_{34} B_{24}\,^* \ot  (A_{12}A_{13}\,^* + A_{12}A_{23}\,^* + A_{12}A_{14}\,^* + A_{12}A_{24}\,^* ) + \\
&+ B_{23}B_{13}B_{24}\,^* \ot (A_{13}A_{14}\,^* + A_{13}A_{24}\,^*) + \\
&+ B_{23}B_{24}B_{14}\,^* \ot A_{12}A_{14}\,^*  + \\
&+ B_{23}B_{34}B_{24}\,^* \ot A_{12} A_{34}\,^* \\
\end{align*}

\begin{Lem} $\beta \in W_2 \ot H_2(F_4)$ is a cycle with respect to the differential $\partial^*$ defined in equation \ref{deltabar}. 
\end{Lem}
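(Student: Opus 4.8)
The plan is to verify directly that $\partial^*(\beta) = 0$ in $W_1 \otimes H_1(F_4)$ by computing the contribution of each of the six summands of $\beta$. For each generator $B_{ab}B_{cd}B_{ef}\,^* \in W_2$ appearing in $\beta$, I would first apply the coproduct $\mu^*$ computed in Lemma \ref{coproduct}, then contract using the twisting cochain $\tau$ (which kills all tensor factors that are not single generators $B_{ij}^*$, sending $B_{ij}^* \mapsto A_{ij}$) on one side of each tensor term, and finally apply the cap product against the relevant homology class in $H_2(F_4)$. Concretely, a term of the form $B_{ij}^* \otimes (B_{ef}B_{gh})^*$ in $\mu^*(w)$ contributes $(B_{ef}B_{gh})^* \otimes (A_{ij} \cap z)$ to $\partial^*(w \otimes z)$, and symmetrically for terms $(B_{ef}B_{gh})^* \otimes B_{ij}^*$; here $A_{ij} \cap z \in H_1(F_4)$ is computed by the standard pairing of $A_{ij} \in H^1$ against the basis of $H_2$ dual to the $A_{kl}A_{mn}$'s.

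The key step is bookkeeping: each of the six homology classes $z$ in $\beta$ (such as $A_{13}A_{14}\,^*$, $A_{12}A_{34}\,^*$, etc.) caps with the at most six relevant $A_{ij}$'s to give a short combination of the $A_{kl}\,^*$'s, so I would tabulate $A_{ij} \cap (A_{kl}A_{mn})^*$ once and for all. Then for each summand $w \otimes z$ of $\beta$ I read off $\partial^*(w \otimes z) \in W_1 \otimes H_1(F_4)$ as a sum of terms $(B_{ef}B_{gh})^* \otimes A_{ij}\,^*$, where $(B_{ef}B_{gh})^*$ ranges over the level-1 generators appearing in $\mu^*(w)$. Summing over all six summands, I expect massive cancellation in pairs, leaving $0$.

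The main obstacle — or rather the main labor — is the sheer size of the computation: $\dim W_1 = 25$ and $\dim H_1 = 6$, so $\partial^*(\beta)$ lives in a $150$-dimensional space, and each of the six summands produces on the order of a dozen terms via the fairly intricate coproduct formulas of Lemma \ref{coproduct}. The real danger is a sign- or index-transcription error, but since we are over $\Z_2$ there are no signs, which simplifies matters considerably; what remains is careful tracking of which $(B_{ef}B_{gh})^*$ is in standard form and which homology class it gets tensored with. A useful sanity check at the end: the vanishing of $\partial^*(\beta)$ is equivalent, by the bigraded duality between $C^*$ and the convolution complex $C$, to $\beta$ descending to a well-defined homology class that can be paired against cocycles; so I would verify consistency by pairing $\beta$ with the coboundary $\partial(f)$ of an arbitrary $f \in \mathrm{Hom}^0(W_1,H^1)$ and checking the result is zero, which must hold if $\beta$ is a genuine cycle.

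\medskip

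A cleaner alternative, which I would pursue if the brute-force check proves too error-prone, is conceptual: observe that the six coefficients of $\beta$ were produced in the preceding Proposition precisely as the values $\alpha(w)$ for the six generators $w$, and that the compatibility $\partial(\alpha) = 0$ already established (in the Lemma showing $\alpha$ is a Hochschild cocycle) has a dual incarnation. Specifically, $\beta$ is engineered so that $\langle \beta, \partial(f) \rangle = 0$ for all $f$ by construction, and the claim $\partial^*\beta = 0$ is the statement that $\beta$ lies in the annihilator of the image of $\partial$ on the nose at the chain level, not just up to boundaries. I would try to exhibit $\beta$ as $\partial^*$ of nothing by showing it is supported on exactly the part of $W_2 \otimes H_2$ that $\partial^*$ annihilates — but absent extra structure this still reduces to the direct verification above, so I expect the paper simply carries out the six computations.
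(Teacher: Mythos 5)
Your primary plan is exactly the paper's proof: decompose $\beta$ into its six summands, push each through the coproduct of Lemma \ref{coproduct}, contract with the twisting cochain on the appropriate factor, cap with the given class in $H_2(F_4)$, and observe that the resulting terms in $W_1\otimes H_1(F_4)$ cancel over $\Z_2$; the paper simply carries out this tabulation explicitly, as you anticipated. The only thing missing from your write-up is the execution of that cancellation check (which is the entire content of the lemma), but the method, the bookkeeping of the two coproduct pieces, and the cap-product computation against the dual basis are set up correctly.
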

\begin{proof}
We computed the coproduct on the generators of $W_2$ appearing in $\beta$ in Lemma \ref{coproduct}. 
Let us decompose $\beta=\beta_1+\beta_2+\beta_3+\beta_4+\beta_5+\beta_6$ into the six summands appearing in its definition.
The composition of the coproduct with the twisting cochain and the cap product  gives the six summands

\begin{align*}
\partial^*(\beta_1)= B_{12}B_{23}^* &\ot (A_{14}^* + A_{24}^* ) \\
\partial^*(\beta_2)= (B_{12}B_{14}^* &+B_{12}B_{24}^*+ B_{24}B_{14}^*) \ot (A_{14}^*+A_{24}^*) +B_{12}B_{24}^* \ot (A_{12}^*+ A_{24}^*) + \\
+&I(A_{14}^* \ot B_{12}B_{14}^* + (A_{14}^*+ A_{24}^*) \ot B_{24}B_{14}^*)  \\
\partial^*(\beta_3)= B_{34}B_{24}^* &\ot (A_{13}^*+A_{23}^*+A_{14}^*+A_{24}^*) + B_{12}B_{24}^* \ot A_{12}^* + \\
&+B_{12}B_{23}^* \ot A_{12}^* + B_{34}B_{24}^* \ot (A_{13}^*+A_{23}^*+A_{14}^*+ A_{24}^*) \\
\partial^*(\beta_4) = B_{23}B_{24}^* &\ot (A_{14}^*+ A_{24}^*) + B_{23}B_{13}^* \ot A_{13}^* +  \\
&+I((A_{24}^*+A_{14}^*) \ot B_{12}B_{24}^* + A_{13}^* \ot B_{23}B_{13}^*) \\
\partial^*(\beta_5) = B_{23}B_{14}^*& \ot A_{14}^*+B_{23}B_{24}^* \ot A_{14}^*+ B_{23}B_{24}^*\ot (A_{12}^*+A_{24}^*)+ \\
&+I((A_{12}^*+A_{24}^*) \ot B_{12}B_{23}^*+A_{14}^* \ot (B_{12}B_{23}^*+B_{23}B_{14}^*) ) \\
\partial^*(\beta_6)= I(A_{12}^* &\ot B_{23}B_{24}^*)
\end{align*}
where  $I$ indicates interchange of the tensor factors, applied to the summands coming from the second piece of the coproduct. 
The right hand sides add up to zero and so $\partial^*(\beta)$ is a cycle. 
\end{proof}

\begin{Prop} \label{nontrivial}
The class $[\alpha] \in HH^2(H^*(F_4),H^*(F_4))$ is not trivial.
\end{Prop}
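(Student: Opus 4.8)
The plan is to show that $[\alpha]$ pairs non-trivially with the homology class $[\beta]$ under the canonical pairing between Hochschild cohomology $HH^2(H,H)$ and the homology of the dual complex $C_* = W \ot H_*$. Since the preceding lemma establishes that $\beta$ is a cycle for $\partial^*$, and the earlier lemma establishes that $\alpha$ is a cocycle for $\partial$, the value $\langle \alpha, \beta \rangle \in \Z_2$ is a well-defined invariant: it depends only on the classes $[\alpha]$ and $[\beta]$, because $\langle \partial(f), \beta\rangle = \langle f, \partial^*(\beta)\rangle = 0$ for any $f$ and $\langle \alpha, \partial^*(\gamma)\rangle = \langle \partial(\alpha),\gamma\rangle = 0$ for any $\gamma$. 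So if the pairing is non-zero, $[\alpha]$ cannot be a coboundary.

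First I would make the pairing explicit. The map $\alpha:W_2 \to H^2(F_4)$ was computed on the six generators $(B_{12}B_{23}B_{13})^*$, $(B_{12}B_{24}B_{14})^*$, $(B_{12}B_{34}B_{24})^*$, $(B_{23}B_{13}B_{24})^*$, $(B_{23}B_{24}B_{14})^*$, $(B_{23}B_{34}B_{24})^*$ in the Proposition with values $A_{12}A_{13}+A_{12}A_{23}$, $A_{14}A_{12}+A_{12}A_{24}$, $0$, $0$, $0$, and $A_{23}A_{24}+A_{23}A_{34}$ respectively. The element $\beta$ is supported precisely on these same six generators of $W_2$, tensored with homology classes in $H_2(F_4)$ expressed in the basis dual to the monomial cohomology basis of Corollary \ref{basis}. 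The pairing $\langle \alpha,\beta\rangle$ is then $\sum_{w} \langle \alpha(w), \beta_w\rangle$ where $\beta_w \in H_2$ is the homology-class coefficient of $w$ in $\beta$ and $\langle\,,\rangle$ is the Kronecker pairing between $H^2$ and $H_2$.

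Carrying this out term by term: for $(B_{12}B_{23}B_{13})^*$ the cohomology value $A_{12}A_{13}+A_{12}A_{23}$ pairs with $A_{13}A_{14}^* + A_{13}A_{24}^*$ — I must rewrite $A_{12}A_{13}$ and $A_{12}A_{23}$ in the standard monomial basis (using $A_{13}=A_{12}+A_{23}$ from the Arnold relation, i.e. $A_{13}A_{12} = A_{23}A_{12}$ etc.) and read off the coefficient on $A_{13}A_{14}$ and $A_{13}A_{24}$, which vanish, giving contribution $0$; similarly for the second and fifth summands. The only summand that should contribute is $\beta_3 = (B_{12}B_{34}B_{24})^* \ot (A_{12}A_{13}^* + A_{12}A_{23}^* + A_{12}A_{14}^* + A_{12}A_{24}^*)$ paired against $\alpha((B_{12}B_{34}B_{24})^*) = 0$ — wait, that is zero too. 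The genuine contribution must therefore come from the summands where both $\alpha(w)\neq 0$ and $\beta_w$ has the right support: $(B_{12}B_{23}B_{13})^*$ against a piece of $\beta_3$-type homology, etc. So the real computation is to expand each nonzero $\alpha(w)$ into the standard basis and pair it with the listed homology coefficient, summing over all six generators mod $2$; the claim is that the total is $1$. I would present this as a short explicit table.

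The main obstacle is purely bookkeeping: correctly normalizing the homology basis (the classes $A_{ij}A_{kl}^*$ must be interpreted as the basis dual to the \emph{standard} monomial basis of Corollary \ref{basis}, so e.g. $A_{13}A_{24}$ is a basis monomial but $A_{13}A_{12}$ is not and must be re-expressed), and keeping track of the fact that the cohomology values of $\alpha$ involve non-standard monomials like $A_{12}A_{13}$ and $A_{14}A_{12}$ that collapse under the Arnold relation. Once the pairing is evaluated and shown to be $1 \in \Z_2$, the invariance of the pairing under the two differentials (which follows formally from the adjointness $\langle \partial f, \gamma\rangle = \langle f, \partial^* \gamma\rangle$ between the convolution differential and its dual) immediately forces $[\alpha]\neq 0$ in $HH^2(H^*(F_4),H^*(F_4))$, completing the proof.
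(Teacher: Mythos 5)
Your overall strategy is exactly the paper's: evaluate the Kronecker-type pairing between the cocycle $\alpha\in Hom(W_2,H^2)$ and the cycle $\beta\in W_2\ot H_2$, note that adjointness $\langle\partial f,\gamma\rangle=\langle f,\partial^*\gamma\rangle$ makes $\langle\alpha,\beta\rangle$ depend only on $[\alpha]$ and $[\beta]$, and conclude $[\alpha]\neq 0$ from $\langle\alpha,\beta\rangle=1$. The gap is that you never actually carry out the evaluation, and where you touch it you get the decisive term wrong. The pairing is diagonal in the $W_2$-generators: $\langle\alpha,\beta\rangle=\sum_w\langle\alpha(w),\beta_w\rangle$, and the unique nonzero contribution comes from the second summand, $w=(B_{12}B_{24}B_{14})^*$, where $\alpha(w)=A_{14}A_{12}+A_{12}A_{24}=A_{12}A_{14}+A_{12}A_{24}$ (the generators have degree $1$, so they commute over $\Z_2$) pairs with $\beta_w=A_{12}A_{14}^{\,*}$ to give $1$. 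You explicitly assert that this second summand contributes $0$ (``similarly for the second and fifth summands''), then drift into a search for cross-terms ``against a piece of $\beta_3$-type homology'' --- which do not exist in this pairing --- and end by merely asserting that a table would give total $1$. Without identifying the $(B_{12}B_{24}B_{14})^*$ term correctly, the proof does not close; as written, your term-by-term accounting would yield $0$.

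A secondary slip: no Arnold rewriting is needed at all. By Corollary \ref{basis} the monomials $A_{12}A_{13}$, $A_{12}A_{23}$, $A_{12}A_{24}$, $A_{23}A_{24}$, $A_{23}A_{34}$ are already standard basis elements, and $A_{14}A_{12}$ becomes standard by commutativity alone; moreover ``$A_{13}=A_{12}+A_{23}$'' is not a relation in $H^*(F_4)$ (the Arnold relation is quadratic), so the manipulation you invoke, besides being unnecessary, is incorrect. Fixing the bookkeeping as above --- checking that the first, fifth and sixth summands pair to $0$, the third and fourth have $\alpha(w)=0$, and the second pairs to $1$ --- recovers the paper's proof verbatim.
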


\begin{proof}  The cycle $\beta$ pairs non-trivially with the cocycle $\alpha$.
The only non-trivial pairing occurs between the summand  
$$B_{12} B_{24}B_{14}^* \ot A_{12}A_{14}^* $$ of $\beta$
and the summand $$B_{12} B_{24}B_{14} \ot A_{12}A_{14}$$ of $\alpha$. Hence $<\alpha,\beta>=<[\alpha],[\beta]>=1$.
\end{proof}

Now we are ready to prove the main theorem. 

\begin{Prop} \label{4}
$F_4(\R^2)$ is not formal over $\Z_2$.
\end{Prop}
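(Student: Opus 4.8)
The plan is to assemble the results of the previous two sections into the non-formality statement by appealing to the obstruction-theoretic criterion of Proposition \ref{341}. First I would recall the setup: $(TW,d)$ is the bigraded model of $H^*(F_4)$, which is also (trivially, since the differential is intrinsic to the cohomology) a filtered model of the cohomology algebra $H^*(F_4)$ viewed as a DGA with zero differential. On the other hand, by Theorem \ref{222} and Proposition \ref{prop5.6}, $(TW,D)$ with its quasi-isomorphism $\phi:(TW,D)\to \E_2^*(4)$ is a filtered model of the Barratt--Eccles algebra $\E_2^*(4)$, which is quasi-isomorphic to $C^*(F_4(\R^2);\Z_2)$ by Proposition \ref{kashi}. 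The two filtered models are built on the same underlying bigraded module $W$, with the same $d$ in filtration levels $0$ and $1$ (Lemma \ref{dw2}), and they first differ in filtration level $2$, where the discrepancy $D-d$ is governed by the class $\alpha:W_2\to H^2(F_4)$ of Definition \ref{alpha}.

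Next I would invoke Proposition \ref{341} with $A=C^*(F_4;\Z_2)\simeq \E_2^*(4)$ and $B=H^*(F_4;\Z_2)$: the space $F_4(\R^2)$ is $\Z_2$-formal precisely when $C^*(F_4;\Z_2)$ and $H^*(F_4;\Z_2)$ are quasi-isomorphic as DGA's, which happens if and only if there is an isomorphism $(TW,D)\to(TW,d)$ (equivalently its inverse $(TW,d)\to(TW,D)$) whose difference with the identity lowers the filtration level. The content of Lemma \ref{impos} is exactly that, restricted to the truncation $(TW_{\le 2},\cdot)$, such an isomorphism exists if and only if the Hochschild class $[\alpha]\in HH^2(H^*F_4,H^*F_4)$ vanishes. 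Since any isomorphism of the full filtered models restricts to one of the truncations, the existence of the global gauge equivalence forces $[\alpha]=0$. But Proposition \ref{nontrivial} shows $[\alpha]\neq 0$, because it pairs non-trivially (value $1$) with the explicit cycle $\beta$ in the dual complex. Hence no such isomorphism exists, and $F_4(\R^2)$ is not $\Z_2$-formal.

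I should be careful on one point: Proposition \ref{341} as stated concerns the full filtered models, whereas Lemma \ref{impos} works with the level-$\le 2$ truncation; so the argument I would spell out is the implication ``formal $\Rightarrow$ full gauge isomorphism $\Rightarrow$ truncated gauge isomorphism $\Rightarrow [\alpha]=0$,'' and then contrapose using Proposition \ref{nontrivial}. The non-triviality direction of Proposition \ref{341} is what I actually use, and it holds over the field $\Z_2$ as required. I would also remark that the obstruction $[\alpha]$ lives in bidegree $(3,-1)$, consistent with $\alpha$ being the first potentially non-trivial obstruction to formality, so no lower obstruction could have already obstructed formality — the filtered model genuinely agrees with the bigraded one through filtration $1$.

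The main obstacle, already discharged in the preceding sections, is the honest computation of $\alpha$ on enough generators of $W_2$ (Proposition giving equations \eqref{8}--\eqref{57}) and the verification that the candidate $\beta$ is a $\partial^*$-cycle pairing non-trivially with $\alpha$; given those inputs, the present proposition is a short formal deduction. Concretely:

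\begin{proof}
By Proposition \ref{kashi} the singular cochain algebra $C^*(F_4(\R^2);\Z_2)$ is quasi-isomorphic as a DGA to the Barratt--Eccles algebra $\E_2^*(4)$. By Theorem \ref{222} and Proposition \ref{prop5.6}, $(TW,D)$ is a filtered model of $\E_2^*(4)$, while $(TW,d)$, the bigraded model of $H=H^*(F_4;\Z_2)$, is a filtered model of $H$ regarded as a DGA with zero differential; both are built on the same bigraded module $W$ and share the same differential in filtration levels $0$ and $1$ by Lemma \ref{dw2}. If $F_4(\R^2)$ were $\Z_2$-formal, then $C^*(F_4;\Z_2)\simeq \E_2^*(4)$ would be quasi-isomorphic to $H$, so by Proposition \ref{341} there would exist an isomorphism $(TW,D)\to(TW,d)$ whose difference with the identity lowers the filtration level. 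Restricting such an isomorphism to the truncation $(TW_{\le 2},D)\to(TW_{\le 2},d)$ and inverting yields an isomorphism $\psi:(TW_{\le 2},d)\to(TW_{\le 2},D)$ with $\psi-\mathrm{id}$ lowering the filtration level. By Lemma \ref{impos} this forces $[\alpha]=0$ in $HH^2(H^*F_4,H^*F_4)$, contradicting Proposition \ref{nontrivial}. Therefore $F_4(\R^2)$ is not formal over $\Z_2$.
\end{proof}
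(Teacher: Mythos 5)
Your proposal is correct and follows essentially the same route as the paper: assume formality, apply Proposition \ref{341} to get a filtration-preserving gauge isomorphism between $(TW,d)$ and $(TW,D)$, deduce via Lemma \ref{impos} that $[\alpha]=0$, and contradict Proposition \ref{nontrivial}. The extra care you take about restricting the isomorphism to the truncation $TW_{\le 2}$ is a point the paper leaves implicit, but it does not change the argument.
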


\begin{proof}
If $F_4$ was formal, by Proposition \ref{341} there would be an isomorphism 
$$\psi: (T(W),d) \cong (T(W),D),$$ with $\Psi-id$ lowering filtration level, and then $[\alpha]$ would be trivial by  Lemma \ref{impos}.
But by Proposition \ref{nontrivial} $[\alpha]$ is not trivial, and so $F_4$ is not formal. 
\end{proof}

\begin{Prop} \label{>4}
$F_k(\R^2)$ is not formal over $\Z_2$ for $k>4$.
\end{Prop}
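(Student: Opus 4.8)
The plan is to deduce non-formality of $F_k(\R^2)$ for $k>4$ from the already-established non-formality of $F_4(\R^2)$ (Proposition \ref{4}) by exploiting the naturality of the configuration spaces with respect to the ``forget points'' maps. Recall that the projection $p:F_k(\R^2) \to F_4(\R^2)$ that remembers only the first four points is a fibration; more usefully for our purposes, it admits a section $s:F_4(\R^2) \to F_k(\R^2)$ up to homotopy — given a configuration of $4$ points, one inserts the remaining $k-4$ points in a small neighbourhood of (say) the first point, in standard position. Then $p \circ s \simeq \mathrm{id}_{F_4}$, so $F_4(\R^2)$ is a homotopy retract of $F_k(\R^2)$.

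First I would make this retraction precise at the level of DGAs: the composites $N^*(p)$ and $N^*(s)$ (or, if one prefers, the cochain maps $C^*(p)$ and $C^*(s)$) satisfy $N^*(s) \circ N^*(p) \simeq \mathrm{id}$ as maps of DGAs, hence in cohomology $H^*(F_4)$ is a direct summand, as an algebra retract, of $H^*(F_k)$; concretely $s^*$ sends $A_{ij}$ (for $1 \leq i,j \leq 4$) to $A_{ij}$ and $p^*$ is a split injection sending $A_{ij}$ to $A_{ij}$. Second, I would invoke the general principle that $R$-formality is inherited by homotopy retracts: if $X$ is a retract of $Y$ in the homotopy category and $Y$ is $R$-formal, then $X$ is $R$-formal. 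This follows because a zig-zag of DGA quasi-isomorphisms connecting $C^*(Y,R)$ to $H^*(Y,R)$ can be restricted/corestricted along the retraction data to produce the corresponding zig-zag for $X$; at the purely algebraic level one uses that the obstruction-theoretic invariants (the filtered model, and in particular the Hochschild class $[\alpha]$) are functorial for DGA retracts, so a non-trivial obstruction for the retract $F_4$ forces a non-trivial obstruction for $F_k$. Equivalently, in the contrapositive: if $C^*(F_k,\Z_2)$ were formal, then so would be its retract $C^*(F_4,\Z_2)$, contradicting Proposition \ref{4}.

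Concretely, the cleanest route is to transport the obstruction class directly. The twisting cochain $\tau$ and the coproduct on $W=s(\wt{YB_k})^*$ are natural for the inclusion $YB_4 \hookrightarrow YB_k$ (which is split injective, as is visible on the standard monomial bases in Proposition \ref{ybb}), so the convolution complex $\mathrm{Hom}(s(\wt{YB_4})^*,H^*F_4)$ is a retract of $\mathrm{Hom}(s(\wt{YB_k})^*,H^*F_k)$, compatibly with $\partial$. Under this retraction the filtered model of $\mathcal{E}_2^*(k)$ restricts to one of $\mathcal{E}_2^*(4)$, so the obstruction class $[\alpha_k] \in HH^2(H^*F_k,H^*F_k)$ maps to $[\alpha_4] \in HH^2(H^*F_4,H^*F_4)$ under the retraction-induced map on Hochschild cohomology. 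Since $[\alpha_4] \neq 0$ by Proposition \ref{nontrivial}, we get $[\alpha_k] \neq 0$, and then Lemma \ref{impos} (in its evident version for general $k$) and Proposition \ref{341} show that $\mathcal{E}_2^*(k) \simeq C^*(F_k(\R^2))$ is not formal.

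I expect the only real subtlety to be bookkeeping: one must check that the section $s$ is genuinely compatible (up to the coherent homotopies needed) with the Barratt--Eccles--Smith simplicial models — i.e. that the algebraic splitting $YB_4 \hookrightarrow YB_k$ is realised by an actual map of the simplicial sets $\mathcal{F}_2(W\Sigma_4) \to \mathcal{F}_2(W\Sigma_k)$ lifting the insertion of points, and that all the auxiliary choices ($\phi$ on $W_0,W_1,W_2$, the maps $\iota$, etc.) can be taken so that the $k=4$ data is literally the restriction of the $k$ data. Granting the retraction at the space level, everything downstream is formal nonsense about functoriality of the Halperin--Stasheff/El Haouari obstruction theory, which only uses naturality of the bar construction, the convolution algebra, and Hochschild cohomology for split algebra inclusions. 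Finally, Corollary \ref{overz} follows as before since $\Z_2$-non-formality implies $\Z$-non-formality.
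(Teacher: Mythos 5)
Your overall strategy---reduce to $F_4$ via the forgetful projection $p:F_k(\R^2)\to F_4(\R^2)$ and its section $s$---is the same as the paper's, but the step you treat as a black box, namely that formality of $F_k$ would force formality of its homotopy retract $F_4$, is precisely the nontrivial point, and neither of your justifications works as stated. A zig-zag of DGA quasi-isomorphisms from $C^*(F_k,\Z_2)$ to $H^*(F_k,\Z_2)$ cannot be ``restricted/corestricted along the retraction data'': the intermediate algebras of the zig-zag carry no compatible retractions, so nothing is induced on $C^*(F_4)$. And the ``formal nonsense about functoriality'' you invoke is not formal nonsense here: filtered models are not functorial (Theorem \ref{222} gives uniqueness only up to non-canonical isomorphism), $HH^*(A,A)$ is not functorial in $A$ (for the split pair $p^*,s^*$ one can define $f\mapsto s^*\circ f\circ (p^*)^{\otimes n}$, but this must be stated and checked), and you cannot simply decree that ``the $k=4$ data is literally the restriction of the $k$ data'': a cochain-level realization of $s$ on the Barratt--Eccles models will not carry the chosen cocycles $\omega_{ij}$, the $\cup_1$-products and the Arnold cochains for arity $k$ to the chosen ones for arity $4$ on the nose, only up to coboundaries, so the transported obstruction cocycle agrees with $\alpha_4$ only up to a Hochschild coboundary---and proving that is genuine work of the same kind as Lemma \ref{impos}, not automatic. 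Finally, the general principle ``a homotopy retract of a formal space is formal'' is itself delicate (a retract argument kills the first obstruction but gives no control of the higher ones after re-gauging), and it is certainly not available off the shelf over $\Z_2$ for these non-nilpotent $K(\pi,1)$'s; it cannot be cited as known.

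The paper closes exactly this gap by a different mechanism, arguing by contradiction. Assuming $F_k$ formal, it uses that the bigraded model $B(YB_k)^*$ is cofibrant (Jardine's model structure) to convert the formality zig-zag into a single quasi-isomorphism $q:B(YB_k)^*\to C^*(F_k)$ inducing the identity on cohomology; it then composes with the purely algebraic split injection $B(\bar p)^*:B(YB_4)^*\to B(YB_k)^*$, where $\bar p:YB_k\to YB_4$ kills the generators involving an index larger than $4$ and is identified with the topologically induced map via the descending central series and Theorem \ref{descend}, and finally with $s^*:C^*(F_k)\to C^*(F_4)$. The composite is a DGA map from the bigraded model of $H^*(F_4)$ to $C^*(F_4)$ inducing the identity in degree $1$, hence in all degrees since the cohomology is generated in degree $1$; it is therefore a quasi-isomorphism, so $F_4$ would be formal, contradicting Proposition \ref{4}. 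If you want to rescue your ``transport the obstruction class'' route, you would in effect have to prove this relative comparison by hand; the cofibrancy argument is the cheaper and complete path, and your write-up as it stands has a genuine gap at its central step.
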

\begin{proof}
Suppose by contradiction that for $k>4$ the configuration space
$F_k$ is formal over $\Z_2$. 
Recall that $H^*(F_k)$ has as Koszul dual the algebra $YB_k$.

The bigraded dual of the bar construction $B (YB_k)^*$ as seen earlier is the bigraded model of  $H^*(F_k)$, and its homology is identified 
in a standard way with it.
If $F_k$ is $\Z_2$-formal, then there is a quasi-isomorphism 
$$q:B(YB_k)^* \stackrel{\simeq}{\longrightarrow} C^*(F_k)$$
 inducing the identity in cohomology
\cite{EH} , since the first algebra is cofibrant in the model structure \cite{Jardine}.

Consider the projection $p: F_k \to F_4$ forgetting all points with labels larger than 4.
It is easy to see that this map admits a section $s:F_4 \to F_k$.

Consider the DGA map $\bar{p}:YB_k \to YB_4$, defined purely algebraically, that sends all generators with indexes larger than 4 to zero.

The homomorphism  $p_\#:P\beta_k \to P\beta_4$ of fundamental groups induces via the descending central series a Lie algebra homomorphism
$p_\flat : \mathcal{L}_k \to \mathcal{L}_4$, that after applying the functor $U$ gives back $\bar{p}$, 
modulo the identification $YB_k \cong  U \mathcal{L}_k$ of  Theorem \ref{descend}.

Consider now the composite homomorphism of DGA's 
$$B(YB_4)^* \xrightarrow{B(\bar{p})^*} B(YB_k)^* \xrightarrow{q}
C^*(F_k) \xrightarrow{s^*} C^*(F_4).$$ 
It induces the identity on cohomology in degree $1$, and henceforth in all degrees. This is a contradiction because $F_4$ is not formal over $\Z_2$.
\end{proof}

Propositions \ref{4} and \ref{>4} together prove Theorem \ref{main} and then Corollary \ref{overz}.

\

Open questions: is  $F_4(\R^2)$ non-formal over $\Z_p$ for some odd prime $p$ ? Does this happen for infinitely many primes?
 How far do we have to go in the filtered model  to find an obstruction? 
The same questions hold for $F_k(\R^2)$ and $k>4$.

Notice that the rational formality of a {\em simply connected}  space implies its $\Z_p$-formality for all primes $p$ but a finite number
(Theorem 3.1 in \cite{EH} ). However this does not apply in our case, since $F_k(\R^2)$ is a (non-nilpotent)  $K(\pi,1)$.

\end{document}